\documentclass[a4paper]{amsart}
\usepackage[T1]{fontenc}
\usepackage[utf8]{inputenc}
\usepackage{lmodern}
\usepackage{enumerate}
\usepackage{amssymb,amsxtra}
\usepackage[all]{xy}
\usepackage{xcolor}
\usepackage{nicefrac,mathtools}
\usepackage{microtype}
\usepackage{amscd}
\usepackage{geometry}
\usepackage{mathbbol}
\usepackage{comment}

\usepackage{comment}

\usepackage[pdftitle={...},
 pdfauthor={Dami\'an Ferraro},
 pdfsubject={Mathematics}]{hyperref}
\usepackage{cite}

\numberwithin{equation}{section}
\theoremstyle{plain}
\newtheorem{theorem}[equation]{Theorem}
\newtheorem{lemma}[equation]{Lemma}
\newtheorem{proposition}[equation]{Proposition}
\newtheorem{corollary}[equation]{Corollary}
\theoremstyle{definition}
\newtheorem{definition}[equation]{Definition}
\newtheorem{notation}[equation]{Notation}
\theoremstyle{remark}
\newtheorem{remark}[equation]{Remark}

\newtheorem{alphthm}{Theorem}			


\newcommand*{\C}{\mathbb C}

\newcommand{\CLB}{\mathrm{CLB}}

\newcommand{\Ad}{\operatorname{Ad}}

\newcommand{\bB}{\mathbb{B}}
\newcommand{\bC}{\mathbb{C}}

\newcommand{\bk}{\mathbb{k}}
\newcommand{\bo}{\bar{\otimes}}

\newcommand{\bK}{\mathbb{K}}
\newcommand{\bL}{\mathbb{L}}
\newcommand{\bM}{\mathbb{M}}
\newcommand{\bN}{\mathbb{N}}

\newcommand{\cA}{\mathcal{A}}
\newcommand{\cB}{\mathcal{B}}
\newcommand{\cC}{\mathcal{C}}

\newcommand{\cL}{\mathcal{L}}

\newcommand{\cX}{\mathcal{X}}


\newcommand{\fqb}[2]{\dot{#1}_{#2}}
\newcommand{\fqbr}[2]{\dot{#1}^\red_{#2}}

\newcommand{\id}{\operatorname{id}}
\newcommand{\indfun}{\mathbb{1}}
\newcommand{\Ind}{\operatorname{Ind}}
\newcommand{\intInd}{\widetilde{\operatorname{Ind}}}

\newcommand{\li}[1]{L^\infty(#1)}
\newcommand{\lone}[1]{L^1(#1)}
\newcommand{\ltwo}[1]{L^2(#1)}

\newcommand{\otmax}{\otimes_{\max}}
\newcommand{\otmin}{{\otimes}}

\newcommand{\qb}[2]{{#1}_{/#2}}
\newcommand{\qbr}[2]{{#1}^\red_{/#2}}

\newcommand{\smu}{{s^{-1}}}

\newcommand{\sot}{\texttt{sot}}
\newcommand{\tmu}{{t^{-1}}}

\newcommand{\wcp}{\texttt{wcp}}
\newcommand{\wstar}{{\operatorname{\rm w}^*}}
\newcommand{\wot}{\texttt{wot}}

\newcommand{\M}{\mathcal M} 
\newcommand{\sbe}{\subseteq}


\providecommand{\cspn}{\overline{\mathop{\rm span}}}
\providecommand{\Ind}{{\mathop{\rm Ind}}}
\providecommand{\red}{{\mathop{\rm r}}}
\providecommand{\spn}{{\mathop{\rm span}}}
\providecommand{\supp}{{\mathop{\rm supp}}}

\newcommand{\Cstar}{\texorpdfstring{$\mathrm C^*$}{C*}}
\newcommand{\Wstar}{\texorpdfstring{$\mathrm W^*$}{W*}}
\newcommand*{\Star}{$^*$\nobreakdash-\hspace{0pt}}

\DeclarePairedDelimiterX{\braket}[2]{\langle}{\rangle}{#1\,\delimsize\vert\,\mathopen{}#2}

\geometry{top=30mm, bottom=25mm, inner=30mm,outer=25mm,asymmetric}

\title[Amenability for noncommutative dynamical systems and Fell bundles]{Characterizations of amenability for noncommutative dynamical systems and Fell bundles}
\author{Alcides Buss}
\author{Damián Ferraro}

\dedicatory{To the memory of Fernando Abadie, 1964 - 2024.}

\subjclass[2020]{Primary 46L55; Secondary 22D25, 43A07}

\keywords{Fell bundles, amenability, C*-dynamical systems,
crossed products, approximation property, nuclearity}


\begin{document}

\begin{abstract}
We resolve key open questions regarding approximation properties and their permanence for Fell bundles over locally compact groups. Specifically, we establish the equivalence between the B\'edos--Conti approximation property (BCAP) and the Exel--Ng positive approximation property (AP), completely removing the necessity of assuming nuclearity on the unit fiber.

To overcome the obstructions present in general Fell bundles (such as the lack of spatial arguments and exactness), we introduce a diagonal maximal tensor product $\otimes^d_{\max}$. We prove that a Fell bundle $\mathcal{A}$ has the AP if and only if $\mathcal{A} \otimes^d_{\max} \mathcal{B}$ has the weak containment property (wcp) for every Fell bundle $\mathcal{B}$. For $C^*$-dynamical systems, this yields a characterization of amenability that was known to hold under exactness assumptions.

Furthermore, this tensorial machinery allows us to establish highly non-trivial permanence properties for the AP, including passage to restrictions over closed subgroups and partial quotients by normal subgroups. We also provide applications concerning the nuclearity of full and reduced cross-sectional $C^*$-algebras.
\end{abstract}

\maketitle

\section{Introduction}
Amenability is a central notion connecting group theory, operator algebras and noncommutative dynamics.
In the setting of actions of locally compact groups on von Neumann algebras, amenability originates in Zimmer's work on measurable dynamics and was studied systematically by Anantharaman-Delaroche; see\ \cite{ADaction1979,ADactionII1982}. In this context, amenability is often referred to as \emph{Zimmer amenability}.

For actions on \Cstar-algebras, Anantharaman-Delaroche introduced an amenability notion for discrete groups via the bidual \cite{ADsystemes1987}. However, for a general locally compact group $G$ this approach does not extend directly, since the induced action on the bidual $A^{**}$ of a $G$-\Cstar-algebra $A$ need not be $w^*$-continuous. This conceptual difficulty was resolved more recently by Buss-Echterhoff-Willett \cite{BssEff_amenability} and, independently, by Ozawa-Suzuki \cite{ozawa2021characterizations}, building on Ikunishi’s theory \cite{Ikunishi88} of the enveloping $G$-\Wstar-algebra $A''_\alpha$. These works show that several natural formulations of amenability for \Cstar-dynamical systems of locally compact groups -- including definitions via the enveloping \Wstar-algebra and approximation properties -- are equivalent.

Since these foundational developments, a rich theory has emerged relating amenability to approximation properties, tensorial behaviour, and structural features of crossed products. Throughout the paper, the term \emph{amenable} for \Cstar-actions is used in this modern sense, and we freely rely on the equivalent characterizations established in the works cited above.

Despite this progress, extending these characterizations and permanence properties from group actions to general Fell bundles presents severe technical obstructions. In the classical setting of group actions, permanence properties of amenability (such as restriction to subgroups or quotients) often rely on spatial arguments, exactness of the group, or nuclearity. For general Fell bundles, these techniques break down entirely, particularly because the unit fiber $B_e$ need not be nuclear and exactness of the base group is not guaranteed. 

The primary goal of this paper is to resolve these open questions regarding approximation properties and permanence in Fell bundles. Most notably, we establish the equivalence of B\'edos--Conti's approximation property (BCAP) and Exel--Ng's positive approximation property (AP) without any nuclearity hypotheses, and we prove the passage of amenability to subgroups and quotients.

To bypass the classical obstructions, our strategy relies on introducing a new tensorial mechanism. First, we give a tensorial (``diagonal'') criterion for amenability of actions: amenability of a system can be tested by forming diagonal maximal tensor products with arbitrary test-systems and checking the weak containment property (wcp). Second, we extend this viewpoint to Fell bundles by constructing a natural diagonal maximal tensor product $\otimes_{\max}^d$ and showing that the AP is equivalent to a diagonal tensorial wcp characterization.

Below we state our main contributions in a compact, theorem-style form so the reader can immediately see the scope of the paper. After the statements we provide comments that highlight important subtleties and limitations (in particular concerning reductions to subgroups and quotient bundles) which are treated carefully in Section~\ref{sec: reductions and quotients}.

\begin{alphthm}\label{A: amenability-actions}
Let $G$ be a locally compact group and let $\alpha$ be a \Cstar-action of $G$ on a \Cstar-algebra $A$. Using the universal unitary implementation $(A\subset \bB(X_\alpha),U^{\alpha})$, denote by $A'_\alpha$ the commutant of $A$ in $\bB(X_\alpha)$ and let $\beta$ be the continuous part of $\Ad(U^{\alpha})|_{A'_\alpha}$. The following are equivalent:
\begin{enumerate}
  \item $\alpha$ is amenable (i.e., the enveloping \Wstar-action $\alpha''$ on $A''_\alpha$ is \Wstar-amenable in the sense of Anantharaman-Delaroche).
  \item For every \Cstar-action $\gamma$ of $G$, the diagonal maximal tensor action $\alpha\otimes_{\max}^d\gamma$ has the weak containment property (wcp), i.e.\ the canonical quotient
  \[
  (A\otimes_{\max} B)\rtimes_{\alpha\otimes^d_{\max}\gamma} G \twoheadrightarrow (A\otimes_{\max} B)\rtimes_{\red\, (\alpha\otimes^d_{\max}\gamma)} G
  \]
  is an isomorphism.
  \item The diagonal action $\alpha\otimes_{\max}^d\beta$ has the wcp for the specific canonical test action $\beta$ defined above.
\end{enumerate}
\end{alphthm}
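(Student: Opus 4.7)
The plan is to prove the cycle $(1)\Rightarrow(2)\Rightarrow(3)\Rightarrow(1)$. The implication $(2)\Rightarrow(3)$ is immediate, since (3) is the specialization $\gamma=\beta$ of (2), so the real work lies in $(1)\Rightarrow(2)$ and $(3)\Rightarrow(1)$.

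For $(1)\Rightarrow(2)$, I would begin from the approximating net witnessing Anantharaman--Delaroche amenability of the enveloping \Wstar-action $\alpha''$: a net $(\xi_i)$ of compactly supported, positive-type elements in a suitable completion of $C_c(G)\otimes A''_\alpha$ that converges to the unit in the appropriate weak sense and dominates the quotient map from full to reduced crossed products. Given an arbitrary test action $(B,G,\gamma)$, the diagonal structure of $\alpha\otimes^d_{\max}\gamma$ lets me enlarge this net by tensoring with the unit of $B''$. Since the resulting elements remain positive-type for the diagonal action and are supported on the $A$-factor, they induce completely positive multipliers of $(A\otimes_{\max} B)\rtimes_{\alpha\otimes^d_{\max}\gamma} G$ that factor through the reduced crossed product, forcing the canonical quotient to be isometric.

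For $(3)\Rightarrow(1)$, the point of taking $\beta$ to be the continuous part of $\Ad(U^\alpha)|_{A'_\alpha}$ is that the commuting inclusions $A\hookrightarrow\bB(X_\alpha)$ and $B\hookrightarrow A'_\alpha\subset\bB(X_\alpha)$ yield, by the universal property of $\otimes_{\max}$, a \Star-homomorphism $\pi\colon A\otimes_{\max} B\to\bB(X_\alpha)$ with $\pi(a\otimes b)=ab$. Together with $U^\alpha$ this is a covariant representation of $\alpha\otimes^d_{\max}\beta$, so its integrated form defines a representation of the full crossed product on $X_\alpha$. Under the wcp hypothesis, this representation factors through the reduced crossed product. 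I would then pair it with the regular representation on $X_\alpha\otimes L^2(G)$ and extract, from matrix coefficients of vectors of the form $\xi\otimes\eta$, a net of positive-type, continuous, compactly supported functions $G\to A''_\alpha$ with the invariance and approximation properties required by Anantharaman--Delaroche to witness \Wstar-amenability of $\alpha''$.

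The main difficulty is precisely this last step of $(3)\Rightarrow(1)$: extracting from wcp a \emph{sufficiently rich} supply of matrix coefficients. This is where taking $\beta$ on the \emph{continuous} part of $\Ad(U^\alpha)|_{A'_\alpha}$ is essential, since it guarantees that the coefficients obtained are continuous in $g\in G$ and that one may apply a stabilization argument to combine them into a single approximating net. A secondary subtlety is the need to control how cutoffs and bounded approximate units interact with $\otimes_{\max}$ when $A$ is nonunital; this is standard but must be tracked through the diagonal construction to ensure that the positive-type witnesses produced really land in the right corner of the bidual.
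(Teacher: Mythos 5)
Your overall architecture (a cycle with $(2)\Rightarrow(3)$ trivial and $(3)\Rightarrow(1)$ as the substantive step exploiting the commutant action) matches the paper, and $(1)\Rightarrow(2)$ is in any case quoted there from Buss--Echterhoff--Willett; your starting point for $(3)\Rightarrow(1)$ --- the representation $a\otimes b\mapsto ab$ of $A\otmax B$ on $X_\alpha$, covariant with $U^\alpha$, which under the \wcp\ factors through the reduced crossed product --- is also exactly the paper's. The genuine gap is in what you do next. From the factorization you only know that a covariant representation on $X_\alpha$ is weakly contained in a regular representation; since that representation is not one-dimensional, weak containment does not let you ``extract from matrix coefficients a net of positive-type, compactly supported functions $G\to A''_\alpha$ with the invariance and approximation properties required.'' That passage from weak containment to a Reiter-type approximating net is precisely the gap between the \wcp\ and amenability that the theorem is about (the \wcp\ of $\alpha$ alone does not imply amenability), and you never explain how the commutant structure of $\beta$ is used to produce functions converging to $1$ with values in the centre of $A''_\alpha$. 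Your remark that the continuity of $\beta$ ``guarantees that the coefficients obtained are continuous'' does not address this; in the paper the continuous part is needed simply so that $\beta$ is a \Cstar-action and so that an extension lemma can recover the commutant $M$ from $M^c$.

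The paper's argument is dual to yours: instead of a Reiter-type net it builds an invariant-mean-type object directly. It extends the \Star{}homomorphism $\psi_0$ (from the reduced $=$ full crossed product onto the image of $(\pi\times\iota^c)\rtimes U$) by Arveson to a ccp map $\psi$ on all of $\bB(\ltwo{G}\otimes Y)$, and multiplicative-domain computations give $\psi(\lambda V_t)=U_t$ and $\psi(1\rho(a))=\pi\times\iota^c(a)$, so $\psi$ is equivariant and a bimodule map over the crossed product. Composing $\psi$ with an equivariant tensor map $\mu\colon \li{G}\bo Z(M)\to \li{G}\bo N$ (constructed in Appendix~\ref{sec: appendix tensor product of cp equivariant maps}, with $N$ in the commutant of the image of the crossed product) forces the image to land in $Z(M)$, yielding the equivariant conditional expectation $(\li{G}\bo Z(M))^c\to Z(M)^c$ that is the definition of \Wstar-amenability. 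To salvage your sketch you would need a convexity or Hahn--Banach step converting weak containment into such a conditional expectation, at which point you would essentially have reconstructed the paper's proof. A secondary inaccuracy: in $(1)\Rightarrow(2)$ your positive-type witnesses live in $A''_\alpha$, not in $\M(A)$, so they do not directly define completely positive multipliers of $(A\otmax B)\rtimes G$; handling this is exactly why the cited reference works with the (quasi-central) approximation property or with standard forms.
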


Theorem~\ref{A: amenability-actions} gives a practical tensorial test for amenability: (2) is a full tensorial characterization while (3) shows that a single, canonical test-action suffices. Our proof removes the need to assume exactness of $G$ (previously required in some formulations) by exploiting a careful analysis of the universal unitary implementation and a von Neumann tensoring technique for equivariant completely positive maps (see Appendix~\ref{sec: appendix tensor product of cp equivariant maps}).

\begin{alphthm}\label{B: fell-bundles-diagonal}
Let $\cA$ be a Fell bundle over a locally compact group $G$. There is a natural diagonal maximal tensor product $\otimes_{\max}^d$ for Fell bundles over the same group. With this notation, the following are equivalent:
\begin{enumerate}
  \item $\cA$ has Exel--Ng's positive approximation property (AP).
  \item For every Fell bundle $\cB$ over $G$, the diagonal Fell bundle $\cA\otimes_{\max}^d\cB$ has the weak containment property (i.e.\ its full and reduced cross-sectional \Cstar-algebras coincide).
  \item The canonical action of $G$ on the \Cstar-algebra of kernels of $\cA$ (as in \cite{Ab03}) is amenable in the sense of Anantharaman-Delaroche.
\end{enumerate}
\end{alphthm}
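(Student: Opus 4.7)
The plan is to use the \Cstar-algebra of kernels of \cite{Ab03} as a bridge between Fell bundles and \Cstar-dynamical systems, reducing Theorem~\ref{B: fell-bundles-diagonal} to Theorem~\ref{A: amenability-actions}.

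First I would construct the diagonal maximal tensor product $\cA\otmax^d\cB$ by defining the fibre at $g\in G$ to be the completion of the algebraic tensor product $\cA_g\otimes\cB_g$ under the norm
\[
\norm{x}=\sup\norm{(\pi\otimes\rho)(x)},
\]
where the supremum runs over pairs of commuting representations of $\cA$ and $\cB$ on a common Hilbert space. The Fell-bundle axioms are verified fibrewise, using that the unit fibre is the ordinary maximal \Cstar-tensor product $\cA_e\otmax\cB_e$ and that the local trivializations assemble into a continuous Banach bundle structure.

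The central technical step is to establish a natural $G$-equivariant isomorphism
\[
\bK(\cA\otmax^d\cB)\;\cong\;\bK(\cA)\otmax\bK(\cB),
\]
intertwining the diagonal $G$-action on the left with the diagonal maximal tensor action on the right. With this isomorphism in hand the equivalences of the theorem become tractable. The equivalence (1)$\Leftrightarrow$(3) follows by translating positive-type approximation nets for $\cA$ into Anantharaman-Delaroche nets for the kernel action on $\bK(\cA)$, and conversely. The equivalence (3)$\Leftrightarrow$(2) is then obtained by applying Theorem~\ref{A: amenability-actions} to the kernel action: amenability of this action is equivalent to wcp of its diagonal maximal tensor product with every test \Cstar-system, and via the kernel-tensor isomorphism above this corresponds precisely to wcp of $\cA\otmax^d\cB$ for every Fell bundle $\cB$. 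Here one also uses that the full and reduced cross-sectional algebras of a Fell bundle are Morita equivalent to the corresponding crossed products of its kernel action, so wcp transfers between the bundle and action levels, and that every \Cstar-action is, up to such Morita equivalences, the kernel action of a Fell bundle, so that Fell-bundle tests on one side match \Cstar-system tests on the other.

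The main obstacle will be the kernel-tensor identification itself: one must match a fibrewise maximal norm on the bundle with the global maximal norm on an infinite-dimensional matrix algebra, which requires a careful factorization of representations of $\cA\otmax^d\cB$ into commuting representations of each factor, together with control of kernels as limits of finitely supported matrix sums. A secondary difficulty is to verify that the Morita-equivalence bridge preserves wcp uniformly across all test bundles/actions, so that the universal tests on each side truly correspond. Once these technical points are settled, the rest of the argument is a formal consequence of Theorem~\ref{A: amenability-actions} and the kernel dictionary.
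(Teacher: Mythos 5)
Your reduction hinges on the claimed natural $G$-equivariant isomorphism $\bk(\cA\otmax^d\cB)\cong\bk(\cA)\otmax\bk(\cB)$, and this is where the argument breaks down: no such isomorphism exists. A kernel for $\cA\otmax^d\cB$ is a function on $G\times G$ with $k(s,t)\in A_{st^{-1}}\otmax B_{st^{-1}}$, whereas elementary tensors in $\bk(\cA)\otmax\bk(\cB)$ are functions on $G\times G\times G\times G$; the former only sees the ``diagonal'' part of the latter. Concretely, take $G$ finite of order $n\geq 2$ and $\cA=\cB=\bC\times G$ the trivial line bundle: then $\cA\otmax^d\cB\cong\bC\times G$, so $\bk(\cA\otmax^d\cB)\cong \bM_n(\bC)$, while $\bk(\cA)\otmax\bk(\cB)\cong \bM_n(\bC)\otimes\bM_n(\bC)\cong\bM_{n^2}(\bC)$. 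At best one could hope for an equivariant Morita equivalence between the two kernel actions, but that is itself a nontrivial claim requiring proof, and it is essentially the content the paper has to work to establish by a different route.

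The paper avoids this identification entirely. For the implication from the tensorial \wcp\ condition to amenability of the kernel action it tests only against semidirect product bundles $\cA$ of \Cstar-actions and uses Lemma~\ref{lem:equivalence preserved under diagonal tensor} --- which crucially requires one tensor factor to be a semidirect product bundle --- together with the equivalence between $\cB$ and the semidirect product bundle $\cC$ of the kernel action $\beta$, so as to replace $\cA\otmax^d\cB$ by the semidirect product bundle of $\alpha\otmax^d\beta$ and then invoke Theorem~\ref{thm: equivalence of amenability for cstar actions}. The converse implications are not obtained by reversing this step but by closing a cycle through the approximation properties: amenability of $\beta$ yields the positive AP of its semidirect product bundle via Ozawa--Suzuki, which is pulled back to $\cB$ through a hereditary Fell subbundle (Lemma~\ref{lemma: symmetric nets}) and the invariance of the positive $1$-AP under strong equivalence (Lemma~\ref{lemma: pmap and the strong equivalente}). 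Your proposed direct ``translation of positive-type approximation nets into Anantharaman-Delaroche nets'' glosses over precisely this, which is the technical heart of the proof.
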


Theorem~\ref{B: fell-bundles-diagonal} unifies Exel--Ng's AP with a dynamical notion of amenability for the kernel action and extends the diagonal tensor viewpoint from actions to general Fell bundles.

\begin{alphthm}\label{C: BCAP-permanence-nuclearity}
The following permanence and nuclearity statements hold for actions and Fell bundles over a locally compact group $G$:
\begin{enumerate}
  \item (BCAP equivalence) The approximation property introduced by B\'edos--Conti (BCAP), suitably extended to Fell bundles over locally compact groups, is equivalent to Exel--Ng's AP (no nuclearity hypothesis on the unit fiber is required). This establishes BCAP as a conceptually and technically robust characterization of amenability for Fell bundles.
  \item (Restriction to subgroups) If $\cA$ has the AP and $H\le G$ is a closed subgroup, then the restriction $\cA|_H$ has the AP.
  \item (Normal subgroup quotients) If $H\trianglelefteq G$ is a normal closed subgroup and $\qb{\cB}{H}$ denotes the quotient Fell bundle over $G/H$ constructed from $\cA$ (see remarks below), then $\qb{\cB}{H}$ has the AP whenever $\cA$ has the AP.
  \item (Nuclearity interplay) A Theorem of F. Abadie states that if $\cA$ has the AP and $A_e$ is nuclear, then $C^*(\cA)=C^*_\red(\cA)$ is nuclear.
  Conversely, if $C^*_\red(\cA)$ is nuclear and $G$ inner amenable, then $A_e$ is nuclear and $\cA$ has the AP (we give self contained proofs for SIN groups).
\end{enumerate}
\end{alphthm}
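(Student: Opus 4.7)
The plan is to reduce each of (1)--(4) to Theorem~\ref{B: fell-bundles-diagonal}, which identifies the AP for a Fell bundle with Anantharaman-Delaroche amenability of the canonical $G$-action on the kernel \Cstar-algebra, and then to apply classical permanence results for amenable actions together with the tensor-product techniques of the Appendix. In this way the harder Fell-bundle statements become consequences of their \Cstar-action analogues, while (1) provides a dictionary between the various approximation properties that avoids nuclearity hypotheses.

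For (1), the implication BCAP $\Rightarrow$ AP is direct: I would convolve a BCAP net with a compactly supported approximate unit on $G$ and apply the positive-square construction $\xi \mapsto \xi^{*} * \xi$ to produce the positive-type sections required by Exel--Ng. The hard direction AP $\Rightarrow$ BCAP is where the nuclearity hypothesis on the unit fiber usually enters; my plan is to circumvent it by routing through the amenable kernel action and using the equivariant completely positive tensoring machinery of the Appendix to construct BCAP nets whose values land in the unit fiber rather than in an auxiliary multiplier algebra. Statement (2) then follows quickly: the kernel algebra of $\cA|_H$ embeds $G$-equivariantly into that of $\cA$ with $H$ acting by restriction, so Anantharaman-Delaroche's classical theorem that amenability passes to closed-subgroup restrictions applies and, transported back through Theorem~\ref{B: fell-bundles-diagonal}, yields the AP for $\cA|_H$.

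Statement (3) is the main obstacle. Even at the level of \Cstar-actions, amenability does not automatically descend along quotients by closed normal subgroups without extra hypotheses, so I must verify that the kernel algebra of $\qb{\cA}{H}$ carries a $G/H$-action that is a genuine quotient of the $G$-action on the kernel algebra of $\cA$. The plan is to describe the kernel algebra of $\qb{\cA}{H}$ as an averaged / induced construction from that of $\cA$ and then to apply the descent technique for amenable actions along closed normal subgroups, which proceeds by averaging positive-type functions along the fibres of $G \to G/H$ against a quasi-invariant measure. The delicate technical points will be continuity of the resulting $G/H$-action on the quotient kernel algebra and compatibility of the averaging procedure with the kernel structure; I expect to reuse the CP tensoring results of the Appendix to control these.

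For (4), the forward direction is the classical Anantharaman-Delaroche implication: amenability ensures that the full and reduced crossed products coincide and that a CPAP on $A$ can be combined with a net of positive-type functions on $G$ to exhibit a CPAP on $A\rtimes_{\red,\alpha}G$. For the converse under the SIN hypothesis I would use inner amenability of $G$ to average a CP factorization of $A\rtimes_{\red,\alpha}G$ against a net of invariant means, producing a $G$-equivariant CP approximation of the identity on $A''$ which, by Theorem~\ref{A: amenability-actions}, forces amenability of $\alpha''$ and hence of $\alpha$. The translation to Fell bundles is then immediate via Theorem~\ref{B: fell-bundles-diagonal} applied to the kernel action.
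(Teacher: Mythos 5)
Your proposal has several genuine gaps, and the most serious one is in item (1), which is the engine driving the paper's proofs of (2) and (3). You have the two directions of the BCAP/AP equivalence reversed: the easy direction is AP~$\Rightarrow$~BCAP, because the Exel--Ng maps $\Phi_{\xi_i}$ are already ccp bundle maps with compact support contained in $\supp(\xi_i)\supp(\xi_i)^{-1}$ (this is Proposition~\ref{prop: some easy properties}). The hard direction is BCAP~$\Rightarrow$~AP, which is the one that previously required nuclearity of $B_e$; your ``convolve and take $\xi^**\xi$'' recipe for it cannot work, since a general ccp map $\Phi\colon\cB\to\cB$ with compact support is not of the form $\Phi_{\xi\eta}$ and there is no way to extract sections $\xi_i\in C_c(G,B_e)$ from it. The paper's actual route is tensorial: BCAP$^1$ passes to diagonal maximal tensor products (Proposition~\ref{prop: BCAP and diagonal tensor product}), BCAP$^1$ implies the \wcp\ (via integrated forms of cp bundle maps factoring through $C^*_\red$), hence every $\cA\otmax^d\cB$ has the \wcp, and Theorem~\ref{thm: main thm about approximation properties and amenability of Fell bundles} then yields the positive $1$-AP. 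Your alternative plan of ``routing through the amenable kernel action'' addresses a direction that is already easy and does not supply the missing one.

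The downstream items inherit this problem because the paper proves (2) and (3) \emph{by means of} the BCAP, not by kernel-action permanence. For (2), your claimed $G$-equivariant embedding $\bk(\cA|_H)\hookrightarrow\bk(\cA)$ does not exist for a general closed subgroup: kernels on $H\times H$ extended by zero multiply to zero in $\bk(\cA)$ when $H$ has Haar-null image in $G$, and even with some identification, \Wstar-amenability does not pass to invariant subalgebras without an equivariant expectation. The paper simply restricts a BCAP net $\{\Phi_i\}$ to $\cB_H$ (cp bundle maps preserve fibers, and $\supp(\Phi_i)\cap H$ is compact). For (3) you correctly identify the obstacle but offer only an unproved ``descent along $G\to G/H$'' heuristic; the paper instead pushes each compactly supported ccp map $\Phi$ forward to a ccp map $\qb{\Phi}{H}$ on the bundle \Cstar-completion $\qb{\cB}{H}$ and verifies $L^1$-convergence on compact slices, again relying on the BCAP$^1$ characterization. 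Finally, for the converse in (4), averaging a CP factorization to get an equivariant CP approximation of $\id_{A''}$ does not produce the equivariant conditional expectation $\li{G}\bo A''_\alpha\to A''_\alpha$ that defines amenability; the paper's argument is structurally different: it uses SIN almost-invariant vectors $\xi_u=m_u^{-1/2}\indfun_u$ to show that the canonical covariant pair into $\M((A\rtimes_{\red\alpha}G)\otmax(B\rtimes_{\red\beta}G))$ is faithful on the \emph{full} crossed product, so nuclearity of $A\rtimes_{\red\alpha}G$ forces the \wcp\ of $\alpha\otmax^d\beta$ for every $\beta$, and Theorem~\ref{thm: equivalence of amenability for cstar actions} concludes.
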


Item (1) of Theorem~\ref{C: BCAP-permanence-nuclearity} deserves special emphasis: B\'edos and Conti introduced BCAP in the discrete group setting and proved that AP implies BCAP (and the converse under nuclearity of the unit fiber). Our contribution is to extend BCAP to the locally compact setting and to show, without imposing nuclearity on the unit fiber, that BCAP and AP are equivalent. This equivalence both clarifies the landscape of approximation properties and provides a powerful, alternate toolkit to detect amenability in examples.

\subsection*{Remarks on reductions and quotient bundles}
A recurring theme in Section~\ref{sec: reductions and quotients} is the relationship between \Cstar-amenability (i.e. AP) of a Fell bundle $\cB$ over $G$ and the \Cstar-amenability of its restriction $\cB_H$ to a closed subgroup $H$ and of the bundle obtained by forming the (partial) quotient over $G/H$. The results in Section~\ref{sec: reductions and quotients} show the following facts, proved there in full detail:
\begin{itemize}
  \item If $\cB$ is \Cstar-amenable then $\cB_H$ is \Cstar-amenable for every closed subgroup $H$ (Theorem~\ref{thm: amenability and reductions}).
  \item When $H$ is open (or more generally, when its normalizer in $G$ is open) we use results from \cite{Ferraro_2024} about the existence of conditional expectations $C^*(\cB)\to C^*(\cB_H)$ and $C^*_\red(\cB)\to C^*_\red(\cB_H)$, so nuclearity and the \wcp\ pass from $\cB$ to $\cB_H$ in that case.
  \item If $H$ is normal we construct full and reduced completions of the partial cross-sectional bundle over $G/H$, denoted $\qb{\cB}{H}$ and $\qbr{\cB}{H}$ respectively, and prove isomorphisms $C^*(\cB)\cong C^*(\qb{\cB}{H})$ and $C^*_\red(\cB)\cong C^*_\red(\qbr{\cB}{H})$; moreover, under the hypothesis that $\cB_H$ has the \wcp\ the two completions coincide.
\end{itemize}

These constructions allow us to deduce many permanence results, but they also uncover a nuanced limitation: while the implication
\[
\text{$\cB$ is \Cstar-amenable} \quad\Longrightarrow\quad \text{$\cB_H$ and }\qb{\cB}{H}\ \text{are \Cstar-amenable}
\]
holds (in fact the forward direction is proved generally in the paper), the converse
\[
\text{$\cB_H$ and }\qb{\cB}{H}\ \text{are \Cstar-amenable} \quad\Longrightarrow\quad \text{$\cB$ is \Cstar-amenable}
\]
is not established in full generality in this article. We prove this converse under additional hypotheses (for instance, when $G$ is inner amenable and the unit fibre $B_e$ is nuclear; see Corollary~\ref{cor: completion nuclear}), and we know the converse holds for discrete $G$ (this will be treated in detail in a forthcoming paper). Thus, Section~\ref{sec: reductions and quotients} contains both positive permanence results and a careful account of what is currently known about converses and the hypotheses required for them.

\subsection*{Methods and structure of the paper}

The proofs combine three main ingredients:
\begin{itemize}
  \item an analysis of the universal unitary implementation of an action and its commutant (used to reduce amenability questions to equivariant conditional expectations);
  \item a construction of diagonal maximal tensor products for Fell bundles based on F. Abadie's maximal tensor product of bundles; and
  \item a technical toolkit to form tensor products of equivariant completely positive maps between von Neumann algebras (developed in Appendix~\ref{sec: appendix tensor product of cp equivariant maps}).
\end{itemize}

The paper is organized as follows. Section~\ref{sec: amenability of dynamical systems} contains preliminaries on \Cstar- and \Wstar-actions and the proof of Theorem~\ref{A: amenability-actions}. Section~\ref{sec: amenability of Fell bundles} constructs $\otimes_{\max}^d$ for Fell bundles and proves Theorem~\ref{B: fell-bundles-diagonal}. Section~\ref{sec: the BCAP} establishes equivalence of BCAP and AP and collects permanence results, including those listed in Theorem~\ref{C: BCAP-permanence-nuclearity}. Section~\ref{sec: reductions and quotients} investigates restrictions, quotients and other permanence properties; here we give the most complete proofs of the forward implications and indicate precisely the extra hypotheses needed for the known converses. Two appendices contain auxiliary technical lemmas used throughout the paper.

\subsection*{Acknowledgements}
We dedicate this work to the memory of Fernando Abadie. The second author warmly thanks the Departamento de Ci\^{e}ncias F\'isicas e Matem\'aticas at Universidade Federal de Santa Catarina for hospitality and support. The first author was supported by CNPq and FAPESC.

\section{Amenability of dynamical systems}\label{sec: amenability of dynamical systems}

\subsection{Preliminaries}\label{ssec:preliminaries}

We start by presenting our notation and some basic facts.
Some results are stated and proved in the Appendix.

In this article, we use \textit{group} as an abbreviation of \textit{locally compact and Hausdorff group}.
A unitary representation $U$ of a group $G$ on (a Hilbert space) $X$ is a group homomorphism $t\mapsto U_t$ from $G$ to the unitary operators of $X$ such that, for all $x,y\in X,$ $(t\in G)\mapsto \langle U_tx,y\rangle$ is continuous. We convention that all of our inner products are linear in the second variable.

For us, a von Neumann algebra (or \Wstar-algebra) is a \Cstar-algebra that is linearly and isometrically isomorphic to the dual of a Banach space. Faithful normal representations need not be nondegenerate (i.e. unital), and even so we use them to construct faithful representations of the von Neumann tensor product $\bo$. By definition, \Wstar\textit{-subalgebra} means a (unital) $\wstar$-closed \Star{}subalgebra. The minimal and maximal tensor products of \Cstar-algebras will be denoted $\otmin$ and $\otmax,$ respectively.
The multiplier algebra and the center of a \Cstar-algebra $A$ will be denoted $\M(A)$ and $Z(A)$, respectively.

When we say that $\gamma$ is a \Wstar-action (respectively, a \Cstar-action) of a group $G$ on a \Wstar-algebra (\Cstar-algebra) $M$, we mean that $\gamma \colon G \times M \to M$, $(t,m) \mapsto \gamma_t(m)$, is a set-theoretic action such that each $\gamma_t$ is a \Star{}isomorphism and, for all $m \in M$, the map $t \mapsto \gamma_t(m)$ is $\wstar$-continuous (norm continuous). A \emph{covariant representation} of $\gamma$ is a triple $(X, \pi, U)$ where $\pi \colon M \to \bB(X)$ is a \Star{}representation, $U$ is a unitary representation of $G$ on $X$, and for all $t \in G$ and $m \in M$, we have $\pi(\gamma_t(m)) = U_t \pi(m) U_t^* =: \Ad(U)_t(\pi(m))$. If we qualify a covariant representation with a property (e.g., normal, unital, nondegenerate), we mean that the representation $\pi$ has that property. A \emph{unitary implementation} of the \Wstar-action (respectively, \Cstar-action) $\gamma$ of $G$ on $M$ is a covariant representation $(X,\pi,U)$ of $\gamma$ that is faithful, unital, and normal (respectively, faithful and nondegenerate). In this case, we say that $(M \sbe \bB(X), U)$ is a unitary implementation of $\gamma$, and we may omit explicit reference to $\pi$ when no confusion can arise. A triple $(G, A, \alpha)$ is called a \Cstar- (respectively, \Wstar-) dynamical system if $\alpha$ is a \Cstar-(\Wstar-)action of $G$ on $A$.

As in \cite{ADaction1979}, we say that a \Wstar-action $\gamma$ of $G$ on $M$ is amenable (or \Wstar-amenable) if there exists an equivariant conditional expectation $P\colon \li{G} \bo M\to M$, where we identify each $m\in M$ with $1\otimes m\in \li{G}\bo M $ and the action of $G$ on $\li{G} \bo M$ is the diagonal action induced by conjugation by the left regular representation $\lambda\colon G\to \bB(\ltwo{G})$ and $\gamma$.
It was first noticed in \cite{ADaction1979,ADactionII1982} that the amenability of $\gamma$ is related to that of the restriction of $\gamma$ to the center of $M$, $\gamma|_{Z(M)}$.
Anantharaman-Delaroche proved they are equivalent under separability conditions and the general case is a consequence of \cite[Theorem 3.17]{BssEff_amenability}.

The continuous part of $\gamma$, denoted $\gamma^c$, is the restriction of $\gamma$ to the \Cstar-subalgebra $M^c\sbe M$ formed by those $m\in M$ such that $t\mapsto \gamma_t(m)$ is norm continuous.
Recall from \cite{ADactionII1982} that $M^c$ is $\wstar$-dense in $M$ and notice that $\gamma^c$ is a \Cstar-action.
The separability assumptions made in the introduction of \cite{ADactionII1982} are not necessary in the proof of \cite[Lemme 2.1]{ADactionII1982}, so $\gamma$ is \Wstar-amenable if and only if there exists an equivariant conditional expectation $P\colon (\li{G}\bo Z(M))^c\to Z(M)^c$. We used Anantharan-Delaroche's ideas to prove the extension lemma of Appendix~\ref{sec: an extension lemma}.

In Appendix~\ref{sec: appendix tensor product of cp equivariant maps}, we address the problem of constructing the (von Neumann) tensor product of possibly non-normal, equivariant, contractive completely positive (ccp) maps between \Wstar-algebras. Our approach combines techniques from \cite[Lemme 2.1]{ADactionII1982}, \cite[Theorem 3]{NagTomCP}, and \cite[Theorem 4]{Tomiyama_tensor_vN_algebras}.

The notion of amenability of a \Cstar-action $\alpha$ of $G$ on $A$ requires the use of the enveloping $G$-\Wstar-algebra of $\alpha,$ $A''_\alpha$, which we introduce here using the presentation of \cite{BssEff_amenability}.
Let $A\rtimes_\alpha G$ be the (full) crossed-product of $\alpha$ and $A\rtimes_\alpha G\sbe \bB(X_\alpha)$ the universal representation.
This representation is the integrated form of the ``universal'' unitary implementation $(A\sbe \bB(X_\alpha),U^\alpha)$.
By definition, $A''_\alpha$ is the bicommutant of $A$ in $\bB(X_\alpha)$ and the \Wstar-action $\alpha''$ of $G$ on $A''_\alpha$ is given by $(t,m)\mapsto U^\alpha_t m U^\alpha_\tmu\equiv \Ad(U^\alpha)_t (m).$
We say that $\alpha$ is \Cstar-amenable if $\alpha''$ is \Wstar-amenable.

We recall here some facts we will need later on.
Given a covariant representation $(X,\pi,U)$ of $\alpha$, when no confusion can arise, we write $A\rtimes G$ instead of $A\rtimes_\alpha G$.
In general, $\pi\rtimes U\colon A\rtimes_\alpha G\to \bB(X)$ will denote the integrated form of $(\pi,U)$.
For all $t\in G$ and $f\in C_c(G,A)$ we have $U_t\pi\rtimes U(f)= \pi\rtimes U(\tilde{\alpha}_t(f))$ with $\tilde{\alpha}_t(f)(s)=\alpha_t(f(\tmu s)).$
Similarly, for $a\in A$ we define $af$ as the pointwise product of $a$ and $f$ and we have $\pi(a)\pi\rtimes U(f)=\pi\rtimes U(af)$.

The reduced covariant representation $(\ltwo{G}\otimes X,1\pi,\lambda U)$ associated to $(X,\pi,U)$ is given by $1\pi(a)=1\otimes \pi(a)$ and $\lambda U_t= \lambda_t\otimes U_t$; where $\lambda\colon G\to \bB(\ltwo{G})$ is the left regular representation.
Recall that $1\pi\rtimes \lambda U$ factors through a representation $1\pi\rtimes_\red \lambda U$ of the reduced crossed product $A\rtimes_{\red \alpha} G$; that $1\pi\rtimes_\red \lambda U$ is faithful if and only if $\pi$ is faithful and, finally, that $\alpha$ has the \wcp\ if and only if some $1\pi\rtimes \lambda U$ is faithful; in which case every such representation with $\pi$ faithful will be faithful as well.

\begin{remark}
A more common formulation of the induced regular representation on 
$L^2(G,X)=L^2(G)\otimes X$ is given pointwise by \((\widetilde\pi(a)\,\xi)(t):=\pi(\alpha_{t^{-1}}(a))\,\xi(t),\) while the group acts via \((\widetilde U_s\xi)(t):=(\lambda_s\otimes 1)\xi(t)=\xi(s^{-1}t).\) This construction is unitarily equivalent to the one described above, namely $(1\otimes\pi,\;\lambda\otimes U)\cong (\tilde\pi,\tilde U)$. Indeed, the unitary operator \((W\xi)(t):=U_t\,\xi(t)\) intertwines both representations: \(W\,\widetilde\pi(a)\,W^*=1\otimes\pi(a)\) and \(W\,(\lambda_s\otimes 1)\,W^*=\lambda_s\otimes U_s.\)
\end{remark}

The key to the main result of this section is:

\begin{lemma}\label{lemma: the key lemma}
    Let $(X,\pi,U)$ be a nondegenerate covariant representation of a \Cstar-action $\alpha$ of a group $G$ on a \Cstar-algebra $A$ and set $M:=\pi(A)'$ (the commutant). Let $\beta$ be the continuous part of $\Ad(U)|_{M}$, viewed as \Cstar-action of $G$ on $M^c$. 
    If the diagonal action $\alpha\otmax^d\beta$ of $G$ on $A\otmax M^c$ has the \wcp, then $\Ad(U)|_{M}$ and $\Ad(U)|_{\pi(A)''}$ are \Wstar-amenable.
\end{lemma}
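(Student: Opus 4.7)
The plan is to build a $G$-equivariant conditional expectation $E\colon \li{G}\bo M\to M$; by Anantharaman--Delaroche's characterization, the existence of $E$ is equivalent to \Wstar-amenability of $\Ad(U)|_M$. Since $Z(M)=\pi(A)'\cap \pi(A)''=Z(\pi(A)'')$ and \Wstar-amenability is detected on the centre (as recalled in \S\ref{ssec:preliminaries}), the same conclusion for $\Ad(U)|_{\pi(A)''}$ is automatic once we have it for $M$.

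The first move is to assemble the covariant representation of the diagonal action on $X$. Because $M^c\sbe M=\pi(A)'$, the images of $\pi\colon A\to \bB(X)$ and of $\iota_{M^c}\colon M^c\into \bB(X)$ commute, so the universal property of $\otmax$ yields a \Star-homomorphism $\Pi\colon A\otmax M^c\to \bB(X)$ with $\Pi(a\otimes m)=\pi(a)m$. Covariance of $(\pi,U)$ together with the definition of $\beta$ as the continuous part of $\Ad(U)|_M$ makes $(\Pi,U)$ a covariant representation of $\alpha\otmax^d\beta$; the integrated form $\Pi\rtimes U$ is then a \Star-representation of $(A\otmax M^c)\rtimes G$ which, by the \wcp\ hypothesis, factors through the reduced crossed product $(A\otmax M^c)\rtimes_\red G$.

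To extract the conditional expectation from this factorization, I realize $(A\otmax M^c)\rtimes_\red G$ concretely inside $\bB(\ltwo{G}\otimes Y)$ via the regular representation attached to a faithful nondegenerate rep $A\otmax M^c\into \bB(Y)$. An Arveson--Wittstock bimodule extension — bimodular over the copy of $A$ inside the multipliers of $A\otmax M^c$, so that the image lies in $\pi(A)'=M$ — produces a ccp map $\bB(\ltwo{G}\otimes Y)\to M$ extending $\Pi\rtimes_\red U$. The copy of $\li{G}\bo M^c$ sitting inside $\bB(\ltwo{G}\otimes Y)$ (via multiplication operators on $\ltwo{G}$ and the restriction of the fixed faithful rep of $A\otmax M^c$ to $M^c$) is invariant under $\Ad(\lambda\otimes U)$ because the action on $M^c$ is precisely $\beta$. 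Restricting the ccp extension to this copy and pulling back through the natural isomorphism between the two faithful images of $M^c$ produces a candidate ccp equivariant map $E^c\colon \li{G}\bo M^c\to M$ with $E^c(1\otimes m)=m$.

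The main obstacle is to promote $E^c$ to the desired $E\colon \li{G}\bo M\to M$ while preserving both the conditional-expectation property and $G$-equivariance. The extension requires tensoring $\id_{\li{G}}$ with a (possibly non-normal) equivariant ccp map and then passing to the full \Wstar-algebra by normal extension; this is exactly what Appendix~\ref{sec: appendix tensor product of cp equivariant maps} provides. In addition, the Arveson--Wittstock extension is not automatically $G$-equivariant: equivariance on the $M^c$-part is enforced by means of the Fell-absorption unitary $V\in \bB(\ltwo{G}\otimes X)$, $(V\xi)(t)=U_t\xi(t)$, which intertwines $\Ad(\lambda_t\otimes U_t)$ with $\Ad(\lambda_t\otimes 1)$ on the pointwise-multiplication picture, while the extension lemma of Appendix~\ref{sec: an extension lemma} (adapted from \cite[Lemme~2.1]{ADactionII1982}) controls the non-separable subtleties. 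With these ingredients assembled, $E$ is the required equivariant conditional expectation, concluding the proof.
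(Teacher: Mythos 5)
Your first half runs parallel to the paper's argument: the covariant pair $(\Pi,U)$ on $X$ with $\Pi(a\otimes m)=\pi(a)m$, the use of the \wcp\ to identify the full and reduced crossed products, an Arveson-type ccp extension $\psi\colon \bB(\ltwo{G}\otimes Y)\to\bB(X)$, and the observation that bimodularity (equivalently, the multiplicative domain of $\psi$) forces the image of operators commuting with the copy of $A$ to land in $\pi(A)'=M$. All of that is sound and is essentially what the paper does, although your appeal to a ``Fell-absorption unitary'' for equivariance is vaguer than the paper's multiplicative-domain computation showing $\psi(\lambda V_t)=U_t$.

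The genuine gap is the final promotion step, which you flag as ``the main obstacle'' and then dispatch by citing the appendices; they do not do what you need. Your candidate map is only controlled on the copy of $M^c$: the identification between $\iota^c(M^c)\sbe\bB(X)$ and $\rho_c(M^c)\sbe\bB(Y)$ is a \Star-isomorphism of \Cstar-algebras that does \emph{not} extend to the weak closures, and $\psi$ is not normal, so for a general $m\in M$ there is no reason that $\psi(1\otimes\overline{\rho_c}(m))=m$; the conditional-expectation identity $E(1\otimes m)=m$ can only be verified for $m\in M^c$ (indeed the paper verifies it only on $Z(M)^c$). Appendix~\ref{sec: appendix tensor product of cp equivariant maps} tensors equivariant ccp maps already defined between \Wstar-algebras, and Appendix~\ref{sec: an extension lemma} extends a covariant representation of $M^c$ to a ucp map on $M$ --- neither promotes a conditional expectation from $\li{G}\bo M^c$ (which is not even a \Wstar-algebra) to one on $\li{G}\bo M$. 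The paper sidesteps this entirely by a reduction you are missing: by the separability-free version of Anantharaman--Delaroche's Lemme~2.1 recalled in Section~\ref{ssec:preliminaries}, \Wstar-amenability of $\Ad(U)|_M$ is equivalent to the existence of an equivariant conditional expectation $(\li{G}\bo Z(M))^c\to Z(M)^c$. One then extends $\rho_c$ to a ucp equivariant $\overline{\rho_c}\colon M\to\bB(Y)$ (Lemma~\ref{lemma: extension of pic}), checks $\overline{\rho_c}(Z(M))\sbe\rho_A(A)'\cap\rho_c(M)'$, forms $\mu=\id\bo\overline{\rho_c}|_{Z(M)}$ by Theorem~\ref{thm: equivariant vN tensor of equivariant maps}, and sets $P=\psi\circ\mu$ restricted to the continuous part, where $P(1\otimes m)=m$ follows from $\psi(1\otimes\rho_c(m))=\iota^c(m)$ for $m\in M^c$. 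Without this reduction to the centre and its continuous part, your final step does not go through as written.
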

\begin{proof}[Sketch of proof]
    The full details of this proof are deferred to Appendix~\ref{sec: deferred proofs}. The core idea is to construct an equivariant conditional expectation $P\colon (\li{G} \bo Z(M))^c\to Z(M)^c$, this suffices because $Z(M)=Z(\pi(A)'')$.
\end{proof}

In the theorem below, the implication \eqref{item: system amenable}$\Rightarrow$\eqref{item: all tp have wcp} is part of \cite[Theorem~5.16]{BssEff_amenability}.

\begin{theorem}\label{thm: equivalence of amenability for cstar actions}
    Let $\alpha$  be a \Cstar-action of a group $G$ on a \Cstar-algebra $A$.
    Using the universal unitary implementation $(A\sbe \bB(X_\alpha),U^\alpha)$, let $A'_\alpha$ be the commutant of $A$ in $\bB(X_\alpha)$, and let $\beta$ be the continuous part of $\Ad(U^\alpha)|_{A'_\alpha}$.
    In this setting, the following claims are equivalent.
    \begin{enumerate}
        \item\label{item: system amenable} $\alpha$ is \Cstar-amenable.
        \item\label{item: all tp have wcp} For every \Cstar-action $\gamma$ of $G$, the diagonal \Cstar-action $\alpha\otmax^d\gamma$ of $G$ has the \wcp.
        \item\label{item: one specific tp has the wcp} The \Cstar-action  $\alpha\otmax^d\beta$ has the $\wcp$.
    \end{enumerate}
\end{theorem}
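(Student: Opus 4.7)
The plan is to close the cycle \eqref{item: system amenable}$\Rightarrow$\eqref{item: all tp have wcp}$\Rightarrow$\eqref{item: one specific tp has the wcp}$\Rightarrow$\eqref{item: system amenable}. The implication \eqref{item: system amenable}$\Rightarrow$\eqref{item: all tp have wcp} is already noted to follow from \cite[Theorem~5.16]{BssEff_amenability}, so nothing new is required there. The implication \eqref{item: all tp have wcp}$\Rightarrow$\eqref{item: one specific tp has the wcp} is immediate: since $\beta$ is by construction the continuous part of the \Wstar-action $\Ad(U^\alpha)|_{A'_\alpha}$, it is itself a \Cstar-action of $G$, hence an admissible choice of $\gamma$, and \eqref{item: one specific tp has the wcp} is the corresponding specialization.

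The real content is \eqref{item: one specific tp has the wcp}$\Rightarrow$\eqref{item: system amenable}, and this is precisely the situation that Lemma~\ref{lemma: the key lemma} was designed to handle. I would instantiate that lemma at the universal covariant representation, taking $\pi\colon A\hookrightarrow \bB(X_\alpha)$ to be the inclusion and $U=U^\alpha$. This representation is faithful and nondegenerate by the very definition of the universal unitary implementation recalled in the preliminaries, so the standing hypotheses of the lemma are satisfied. With this choice, the commutant $\pi(A)'$ equals $A'_\alpha$, so the continuous part of $\Ad(U)|_{\pi(A)'}$ coincides verbatim with the action $\beta$ of the theorem statement. Hypothesis \eqref{item: one specific tp has the wcp} therefore matches the hypothesis of Lemma~\ref{lemma: the key lemma}, and the conclusion of the lemma delivers the \Wstar-amenability of $\Ad(U^\alpha)|_{\pi(A)''}=\alpha''$. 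By definition of \Cstar-amenability, this is exactly \eqref{item: system amenable}.

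The genuine obstacle in the whole argument — producing an equivariant conditional expectation onto $Z(A''_\alpha)^c$ from the mere \wcp\ of $\alpha\otmax^d\beta$ — has been absorbed entirely into Lemma~\ref{lemma: the key lemma}, where it is handled through the multiplicative-domain analysis of a Stinespring-type extension of the canonical representation of $(A\otmax M^c)\rtimes G$ combined with the equivariant von Neumann tensoring machinery developed in Appendix~\ref{sec: appendix tensor product of cp equivariant maps}. Once that lemma is available, the theorem reduces to the single observation that the universal unitary implementation is precisely the right covariant representation to feed into it, and no separability or exactness hypothesis on $G$ is required.
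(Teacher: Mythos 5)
Your proposal is correct and follows essentially the same route as the paper: the implication \eqref{item: system amenable}$\Rightarrow$\eqref{item: all tp have wcp} is quoted from \cite[Theorem~5.16]{BssEff_amenability}, \eqref{item: all tp have wcp}$\Rightarrow$\eqref{item: one specific tp has the wcp} is the trivial specialization, and \eqref{item: one specific tp has the wcp}$\Rightarrow$\eqref{item: system amenable} is obtained by applying Lemma~\ref{lemma: the key lemma} to the universal unitary implementation, exactly as the paper does (your write-up merely makes the instantiation explicit where the paper calls it ``straightforward'').
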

\begin{proof}
    Since \eqref{item: all tp have wcp} trivially implies \eqref{item: one specific tp has the wcp}, it suffices to show that \eqref{item: one specific tp has the wcp} implies \eqref{item: system amenable}, but this is a straightforward consequence of Lemma~\ref{lemma: the key lemma}.
\end{proof}

\begin{remark}
    For exact groups, the equivalence \eqref{item: system amenable}$\Leftrightarrow$\eqref{item: all tp have wcp} in Theorem~\ref{thm: equivalence of amenability for cstar actions} was already known, see \cite[Theorem~5.16]{BssEff_amenability}. The argument given there relies on Haagerup's standard form of von Neumann algebras. Our proof is different and avoids the use of this machinery.
\end{remark}

\section{Amenability of Fell bundles and \Cstar-partial actions}\label{sec: amenability of Fell bundles}

When we say that $\cB=\{B_t\}_{t\in G}$ is a Fell bundle we mean that the disjoint union $\bigsqcup_{t\in G} B_t$ forms a \Cstar-algebraic bundle over $G$ in the sense of \cite[Ch.~VIII, 16.2]{FlDr88}, with fibers $B_t$.  
Following \cite{Ab03}, we denote by $C_c(\cB)$, $L^1(\cB)$, $C^*(\cB)$, $C^*_\red(\cB)$, and $\bk(\cB)$ the *-algebra of continuous cross-sections with compact support, the $L^1$ cross-sectional algebra, the full cross-sectional \Cstar-algebra, the reduced cross-sectional \Cstar-algebra, and the \Cstar-algebra of kernels of $\cB$, respectively.  

Recall that the algebra of kernels with compact support, $\bk_c(\cB)$, consists of compactly supported continuous functions $k\colon G\times G\to \cB$ with $k(s,t)\in B_{st^{-1}}$.  
It is a dense *-subalgebra of $\bk(\cB)$, fixed under the canonical action $\beta$ of $G$ on $\bk(\cB)$, which is given 
\[
   \beta_r(k)(s,t) \;=\; k(sr,tr)\,\Delta(r),
   \qquad (r,s,t\in G,\; k\in \bk_c(\cB)).
\]

As usual, the semidirect product bundle associated to a \Cstar-action $\alpha$ of $G$ on $A$ is $A\times G$, with operations
\[
   (a,r)(b,s) = (a\,\alpha_r(b),rs), 
   \qquad (a,r)^* = (\alpha_{r^{-1}}(a)^*,r^{-1}).
\]
It is customary to write $a\delta_r$ instead of $(a,r)$, so that the fiber $A\times\{r\}$ is denoted $A\delta_r$.  

In general, semidirect product bundles can be constructed from twisted partial actions on \Cstar-algebras \cite{Ex97twisted}.  
However, we shall only use \Cstar-partial actions here, as in \cite{Ab03}.

The reduced cross-sectional \Cstar-algebra of a Fell bundle was constructed by Exel and Ng in~\cite{ExNg}. 
For a Fell bundle $\cB$, we denote by 
\[
  \lambda^\cB \colon C^*(\cB) \longrightarrow C^*_\red(\cB)
\]
the canonical surjective \Star{}homomorphism given by the reduced representation~\cite[Definition~2.7]{ExNg}. 
We say that $\cB$ has the \emph{weak containment property} (\wcp) if $\lambda^\cB$ is injective, that is, if the full and reduced cross-sectional \Cstar-algebras of $\cB$ coincide. 
Exel and Ng referred to this property as \emph{amenability} of the Fell bundle, but in order to avoid confusion with other notions of amenability used in this article, we adopt the terminology \wcp\ instead (see Definition~\ref{def:FellBundleAmenable} and Remark~\ref{rem:FellBundleAmenable}).

Whenever we say that a Fell bundle has an approximation property in the sense of Exel and Ng, we mean one of those introduced in \cite[Definition~3.6]{ExNg}; by \cite[Theorem~3.9]{ExNg}, each of them implies the \wcp.  
The $L^1$-approximation property, introduced by F. Abadie in \cite[Definition~4.9(3)]{abadie1997tensorv3}, also implies the \wcp\ \cite[Theorem~4.11]{abadie1997tensorv3}, and is formally weaker than the Exel–Ng approximation properties \cite[Proposition~4.10]{abadie1997tensorv3}.  
In what follows, we shall prove that all these approximation properties are in fact equivalent.  

F. Abadie constructed a maximal tensor product of Fell bundles, denoted $\bigotimes_{\max}$, using the maximal tensor product of \Cstar-algebras \cite[Proposition~3.4 and Definition~3.8]{abadie1997tensorv3}.  
Given Fell bundles $\cA=\{A_t\}_{t\in G}$ and $\cB=\{B_t\}_{t\in H}$, their tensor product
\[
   \cA\bigotimes_{\max}\cB=\{A_s\otmax B_t\}_{(s,t)\in G\times H}
\]
is a Fell bundle over $G\times H$, where each fiber is the maximal tensor product $A_s\otmax B_t$ (of right Hilbert modules over $A_e$ and $B_e$). 
The operations are given by 
$(a\otimes b)(a'\otimes b')=aa'\otimes bb'$ and $(a\otimes b)^*=a^*\otimes b^*$, and the topology is the unique Banach bundle topology for which 
\[
   f\oslash g\colon G\times H\to \cA\bigotimes_{\max}\cB,\qquad (s,t)\mapsto f(s)\otimes g(t),
\]
is continuous for all $f\in C_c(\cA)$ and $g\in C_c(\cB)$.  
When $G=H$, we view $G$ as the diagonal of $G\times G$ and define the \emph{diagonal tensor product} as the reduction of $\cA\bigotimes_{\max}\cB$ to $G$, that is
\[
   \cA\otmax^d \cB := \{A_t\otmax B_t\}_{t\in G},
\]
so that for $f\in C_c(\cA)$ and $g\in C_c(\cB)$ one has 
\[
   (f\oslash^d g)(t) := f(t)\otimes g(t)\equiv (f\oslash g)(t,t).
\]

\medskip
Before stating our main theorem for Fell bundles, we recall the relevant approximation properties.

We use \cite[VIII~3.8]{FlDr88} to regard the multiplier algebra $\M(B_e)$ of the fiber over the unit $e\in G$ 
as the \Cstar-algebra of multipliers of $\cB$ of order $e$.  
As shown in \cite{abadie1997tensorv3}, the natural actions $\M(B_e)\times \cB \to \cB$ and 
$\cB\times \M(B_e) \to \cB$ are continuous.  
Given $\xi,\eta\in C_c(G,\M(B_e))$, we define a continuous map 
\[
   \Phi_{\xi\eta}\colon \cB \longrightarrow \cB, 
   \qquad 
   b\in B_t \;\mapsto\; 
   \Phi_{\xi\eta}(b)\equiv \xi\cdot b\cdot \eta 
   := \int_G \xi(s)^*\, b\,\eta(s^{-1}t)\, ds \;\in B_t;
\]
which is linear on each fiber.
With the norm
\[
   \|\xi\|_2 \;:=\; 
   \Big\| \int_G \xi(s)^*\xi(s)\,ds \Big\|^{1/2}
   \quad\text{on } L^2(G,\M(B_e)),
\]
one has 
\(\|\Phi_{\xi\eta}(b)\|\leq \|b\|\,\|\xi\|_2\,\|\eta\|_2.\)  
When $\xi=\eta$ we simply write $\Phi_\xi=\Phi_{\xi\xi}$.

\medskip

Let $\CLB(\cB)$ be the set of continuous maps $\Phi\colon \cB\to \cB$ 
that preserve fibers, are linear on fibers, and satisfy
\[
   \|\Phi\| := \sup\{\,\|\Phi(b)\|\colon b\in \cB,\,\|b\|\leq 1\}<\infty.
\]
A net $\{\Phi_i\}_{i\in I}\subset \CLB(\cB)$ converges to $\Phi\in \CLB(\cB)$ 
\emph{on compact slices} \cite[Definition~3.4]{ExNg} if 
\[
   \lim_i \|\Phi_i\circ f - \Phi\circ f\|_\infty = 0
   \qquad\text{for all } f\in C_c(\cB).
\]
Replacing $\|\cdot\|_\infty$ by $\|\cdot\|_1$ gives the notion of 
\emph{$L^1$-convergence on compact slices}.  
Since $\supp(\Phi_i\circ f - \Phi\circ f)\subset\supp(f)$, 
uniform convergence on compact slices implies $L^1$-convergence.

\begin{definition}[Approximation properties for Fell bundles]\label{def:approx-properties}
Let $\cB=\{B_t\}_{t\in G}$ be a Fell bundle.
\begin{enumerate}
  \item $\cB$ has the \emph{$L^1$-approximation property} (F. Abadie)
  if there exist a constant $M>0$ and nets 
  $\{\xi_i\}_{i\in I},\{\eta_i\}_{i\in I}\subset C_c(G,\M(B_e))$ 
  such that $\|\xi_i\|_2\|\eta_i\|_2\leq M$ for all $i$, 
  and $\{\Phi_{\xi_i\eta_i}\}_{i\in I}$ converges in the $L^1$-sense 
  on compact slices to the identity map $\id_\cB$.

  \item $\cB$ has the \emph{approximation property} (Exel--Ng) 
  if the nets can be chosen in $C_c(G,B_e)$ 
  and the convergence is uniform on compact slices.
\begin{enumerate}

  \item $\cB$ has the \emph{positive approximation property} 
  if, in addition, $\xi_i=\eta_i$ for all $i$.

  \item $\cB$ has the \emph{positive $1$-approximation property} 
  if, moreover, the constant $M$ can be chosen to be $1$.
  \end{enumerate}
\end{enumerate}
\end{definition}

With these definitions in place and abbreviating \textit{approximation property} to AP, we have the following chain of (trivial) implications:
\[
   \text{positive $1$-AP} \;\Rightarrow\;
   \text{positive AP} \;\Rightarrow\;
   \text{AP} \;\Rightarrow\;
   L^1\text{-AP}.
\]

We are now ready to state the main result of this section.

\begin{theorem}\label{thm: main thm about approximation properties and amenability of Fell bundles}
   Let $\cB=\{B_t\}_{t\in G}$ be a Fell bundle. The following conditions are equivalent:
   \begin{enumerate}
      \item\label{item: B has 1 pap} $\cB$ has the positive $1$-approximation property of Exel and Ng.
      \item\label{item: B has pap} $\cB$ has the positive approximation property of Exel and Ng.
      \item\label{item: B has ap} $\cB$ has the approximation property of Exel and Ng.
      \item\label{item: B has Loneap} $\cB$ has the $L^1$-approximation property of F. Abadie.
      \item\label{item: diagonal product has wcp} For every Fell bundle $\cA=\{A_t\}_{t\in G}$, the diagonal tensor product $\cA\otmax^d\cB$ has the \wcp.
      \item\label{item: beta is amenable} The canonical action $\beta$ of $G$ on $\bk(\cB)$ is \Cstar-amenable.
   \end{enumerate}
\end{theorem}

\begin{remark}
   In Definition~\ref{defi: BCAP} we introduce two more approximation properties, and in Corollary~\ref{cor:another characterization of amenability} we show that they are equivalent to the ones above.
\end{remark}

As already mentioned, we have some trivial implications, namely
\( \eqref{item: B has 1 pap}\;\Rightarrow\;\eqref{item: B has pap}
   \;\Rightarrow\;\eqref{item: B has ap}
   \;\Rightarrow\;\eqref{item: B has Loneap}\). For the remaining implications we need some lemmas.

\begin{lemma}\label{lemma: convergence on gamma suffices}
   Let $\cB=\{B_t\}_{t\in G}$ be a Fell bundle, $\{\Phi_i\}_{i\in I}$ a net in $\CLB(\cB)$, and $\Phi\in \CLB(\cB)$.  
   Assume that $M:=\sup_{i\in I}\|\Phi_i\|<\infty$ and that there exists $\Gamma\subseteq C_c(\cB)$ such that for $\Gamma':=\spn(\Gamma)$ one has $C_c(G)\Gamma'\subseteq\Gamma'$ and, for each $t\in G$, the set $\{f(t):f\in\Gamma'\}$ is dense in $B_t$.  
   If $\lim_i\|\Phi_i\circ f-\Phi\circ f\|_\infty=0$ for all $f\in C_c(\cB)$, then $\{\Phi_i\}_{i\in I}$ converges to $\Phi$ uniformly on compact slices.  
   Moreover, if in the limit we use $\|\cdot\|_1$, then $\{\Phi_i\}_{i\in I}$ $L^1$-converges to $\Phi$ on compact slices.
\end{lemma}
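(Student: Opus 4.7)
The plan is a standard three-epsilon argument: given $f\in C_c(\cB)$ and $\epsilon>0$, first approximate $f$ in the inductive-limit topology by some $g\in\Gamma'$, then use the uniform bound $M:=\sup_i\|\Phi_i\|<\infty$ together with $\|\Phi\|<\infty$ (which holds since $\Phi\in \CLB(\cB)$) to write
\[
\|\Phi_i\circ f-\Phi\circ f\|_\infty\le (M+\|\Phi\|)\|f-g\|_\infty+\|\Phi_i\circ g-\Phi\circ g\|_\infty,
\]
and invoke the convergence hypothesis (extended from $\Gamma$ to $\Gamma'$ by linearity) to drive the second term to zero. Taking $\limsup_i$ and then $\epsilon\to 0$ yields the first conclusion. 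The $L^1$ statement follows from the same inequality once one observes that $f-g$ and each $\Phi_i\circ(f-g)$ have support in a fixed compact set $K'$, so the $L^1$ norm is controlled by the Haar measure of $K'$ times the sup norm.

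The main content is the approximation $f\mapsto g$ with controlled support, which is a Stone--Weierstrass type density for Banach bundles. Fix $K:=\supp(f)$; for each $t\in K$ use the fiberwise density to pick $g_t\in\Gamma'$ with $\|g_t(t)-f(t)\|<\epsilon/2$, and use continuity of the bundle norm to shrink to an open neighborhood $U_t\ni t$ on which $\|g_t(s)-f(s)\|<\epsilon$. Extract a finite subcover $U_{t_1},\dots,U_{t_n}$ of $K$, choose $\phi_k\in C_c(G)$ with $\supp(\phi_k)\subseteq U_{t_k}$, $\sum_k\phi_k=1$ on $K$ and $\sum_k\phi_k\le 1$ everywhere, together with a cutoff $\psi\in C_c(G)$ satisfying $\psi\equiv 1$ on $K$, $0\le\psi\le 1$, and $\supp(\psi)\subseteq\bigcup_k U_{t_k}=:K'$. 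The element $g:=\psi\sum_k\phi_k g_{t_k}$ lies in $\Gamma'$ by the module hypothesis $C_c(G)\Gamma'\subseteq\Gamma'$, has support in $K'$, and satisfies $\|f-g\|_\infty<\epsilon$: on $K$ one has $g(s)-f(s)=\sum_k\phi_k(s)(g_{t_k}(s)-f(s))$ with each summand of norm at most $\epsilon\phi_k(s)$; on $K'\setminus K$ one has $f(s)=0$ and whenever $\phi_k(s)\ne 0$ necessarily $s\in U_{t_k}$, so $\|g_{t_k}(s)\|=\|g_{t_k}(s)-f(s)\|<\epsilon$ and hence $\|g(s)\|\le\epsilon$; outside $K'$, $g$ vanishes.

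The hard step is precisely this bundle-valued partition-of-unity construction: producing $g\in\Gamma'$ with simultaneously small uniform error and controlled support. This is the only place where both hypotheses on $\Gamma$ (fiberwise density and $C_c(G)$-stability) are used, and it replaces the usual Stone--Weierstrass invocation available for scalar-valued functions. Once the approximation is in hand, the three-epsilon estimate is routine and the $L^\infty\!\to\! L^1$ passage is immediate from the uniform support bound.
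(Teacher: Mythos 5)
Your proof is correct and follows essentially the same route as the paper's: the identical three-epsilon estimate using the uniform bound $M$ and $\|\Phi\|<\infty$, with the convergence hypothesis read (as clearly intended, given how the lemma is applied) as holding on $\Gamma$ and extended to $\Gamma'=\spn(\Gamma)$ by linearity. The only difference is that where the paper simply cites \cite[II~14.6]{FlDr88} for the density of $\Gamma'$ in $C_c(\cB)$ in the inductive limit topology, you prove that density by hand with a fiberwise approximation and a partition of unity; your construction is sound (the cutoff $\psi$ is redundant, since $\sum_k\phi_k g_{t_k}$ already has support in the compact set $\bigcup_k\supp(\phi_k)$, which also gives the fixed compact support needed for the $L^1$ passage).
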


\begin{proof}
   By \cite[II~14.6]{FlDr88}, $\Gamma'$ is dense in $C_c(\cB)$ for the inductive limit topology, hence also dense for both $\|\cdot\|_\infty$ and $\|\cdot\|_1$.  
   Let $\|\cdot\|$ denote either of these norms. For $f\in C_c(\cB)$, $n\in\bN$, $z_1,\ldots,z_n\in\bC$, $g_1,\ldots,g_n\in\Gamma$, and $i\in I$, one has
   \[
      \|\Phi_i\circ f - \Phi\circ f\|
      \;\leq\; (M+\|\Phi\|)\Big\|f-\sum_{j=1}^n z_jg_j\Big\|
      + \sum_{j=1}^n |z_j|\;\|\Phi_i\circ g_j-\Phi\circ g_j\|.
   \]
   A standard approximation argument then yields $\lim_i\|\Phi_i\circ f - \Phi\circ f\|=0$.
\end{proof}

We continue the proof of Theorem~\ref{thm: main thm about approximation properties and amenability of Fell bundles} with the following

\begin{lemma}\label{lemma: the ap and diagonal tensor}
   Let $\cA=\{A_t\}_{t\in G}$ and $\cB=\{B_t\}_{t\in G}$ be Fell bundles.  
   If at least one of them has the $L^1$-approximation property, then so does $\cA\otmax^d\cB$.
\end{lemma}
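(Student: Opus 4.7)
By symmetry, assume that it is $\cB$ that has the $L^1$-approximation property, and fix nets $\{\xi_i\},\{\eta_i\}\subseteq C_c(G,\M(B_e))$ with $\|\xi_i\|_2\|\eta_i\|_2\leq M$ and $\Phi_{\xi_i\eta_i}\to\id_\cB$ in $L^1$ on compact slices. The strategy is to lift the $\xi_i,\eta_i$ to $\M(A_e\otmax B_e)$ via the natural embedding $\iota\colon \M(B_e)\to\M(A_e\otmax B_e)$, $b\mapsto 1\otimes b$, and then check that the resulting fiberwise operators act as $\id\otimes \Phi_{\xi_i\eta_i}$ on elementary tensors, so that convergence for $\cB$ transfers to convergence for $\cA\otmax^d\cB$ on a rich enough family of sections to invoke Lemma~\ref{lemma: convergence on gamma suffices}.

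Concretely, set $\tilde{\xi}_i:=\iota\circ\xi_i$ and $\tilde{\eta}_i:=\iota\circ\eta_i$. Since $\iota$ is an isometric \Star-homomorphism (it is unital and injective) and the multiplier action of $\iota(\M(B_e))$ on $\cA\otmax^d\cB$ is norm-continuous, $\tilde{\xi}_i,\tilde{\eta}_i\in C_c(G,\M(A_e\otmax B_e))$ with $\|\tilde{\xi}_i\|_2=\|\xi_i\|_2$ and $\|\tilde{\eta}_i\|_2=\|\eta_i\|_2$, hence $\|\tilde{\xi}_i\|_2\|\tilde{\eta}_i\|_2\leq M$. For $a\in A_t$ and $b\in B_t$, one computes
\[
\Phi_{\tilde{\xi}_i\tilde{\eta}_i}(a\otimes b)
=\int_G (1\otimes \xi_i(s)^*)(a\otimes b)(1\otimes \eta_i(s^{-1}t))\,ds
=a\otimes \Phi_{\xi_i\eta_i}(b),
\]
and by linearity and continuity on each fiber, $\Phi_{\tilde{\xi}_i\tilde{\eta}_i}$ restricted to $A_t\otmax B_t$ coincides with $\id_{A_t}\otmax\Phi_{\xi_i\eta_i}|_{B_t}$.

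To conclude, apply Lemma~\ref{lemma: convergence on gamma suffices} to the net $\{\Phi_{\tilde{\xi}_i\tilde{\eta}_i}\}$ in $\CLB(\cA\otmax^d\cB)$, with $\Gamma:=\{f\oslash^d g:f\in C_c(\cA),\,g\in C_c(\cB)\}$. Then $\Gamma':=\spn(\Gamma)$ is stable under multiplication by $C_c(G)$ because $h\cdot(f\oslash^d g)=(hf)\oslash^d g$, and for each $t\in G$ the set $\{h(t):h\in\Gamma'\}$ contains every algebraic elementary tensor of $A_t$ and $B_t$, hence is dense in $A_t\otmax B_t$ by the very definition of the maximal tensor product of fibers. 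For $h=f\oslash^d g\in \Gamma$, the fiberwise formula above gives
\[
\|h(t)-(\Phi_{\tilde{\xi}_i\tilde{\eta}_i}\circ h)(t)\|
\leq \|f(t)\|\cdot\|g(t)-\Phi_{\xi_i\eta_i}(g(t))\|,
\]
so integrating,
\[
\|h-\Phi_{\tilde{\xi}_i\tilde{\eta}_i}\circ h\|_1
\leq \|f\|_\infty\,\|g-\Phi_{\xi_i\eta_i}\circ g\|_1\xrightarrow[i]{}0.
\]
Combined with the uniform bound $\|\Phi_{\tilde{\xi}_i\tilde{\eta}_i}\|\leq M$, Lemma~\ref{lemma: convergence on gamma suffices} yields the desired $L^1$-convergence on compact slices, so $\cA\otmax^d\cB$ has the $L^1$-approximation property.

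The main technical point to verify carefully is the claim that $b\mapsto 1\otimes b$ really defines an isometric, strictly continuous unital \Star-embedding $\M(B_e)\hookrightarrow \M(A_e\otmax B_e)$ whose natural action on the Fell bundle $\cA\otmax^d\cB$ agrees fiberwise with $a\otimes c\mapsto a\otimes bc$; this compatibility is what lets us identify $\Phi_{\tilde{\xi}_i\tilde{\eta}_i}$ with $\id\otmax\Phi_{\xi_i\eta_i}$ on elementary tensors, and is where one has to lean on Abadie's construction of $\cA\otmax\cB$ and the multiplier theory for Fell bundles \cite[VIII~3.8]{FlDr88}.
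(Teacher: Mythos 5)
Your proof is correct and follows essentially the same route as the paper's: lift the nets into $C_c(G,\M(A_e\otmax B_e))$ by tensoring with the identity multiplier, observe that the resulting $\Phi_{\tilde\xi_i\tilde\eta_i}$ acts as the identity tensored with $\Phi_{\xi_i\eta_i}$ on elementary tensors and on sections $f\oslash^d g$, and conclude via Lemma~\ref{lemma: convergence on gamma suffices} with $\Gamma=\{f\oslash^d g\}$. The only cosmetic difference is that you place the $1$ on the left (assuming $\cB$ has the property) while the paper places it on the right (assuming $\cA$ does); the argument is identical up to this symmetry.
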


\begin{proof}
   Assume $\cA$ has the $L^1$-approximation property.  
   For $\xi,\eta\in C_c(G,\M(A_e))$ define
   \[
      \tilde{\xi}(t) := \xi(t)\otimes 1\in \M(A_e\otmax B_e) \quad (t\in G).
   \]
   Then $\tilde{\xi}\in C_c(G,\M(A_e\otmax B_e))$ and for $c\in A_t$, $b\in B_t$ we have
   \[
      \Phi_{\tilde{\xi}\tilde{\eta}}(c\otimes b) 
      = \int_G \xi(s)^*c\eta(s^{-1}t)\otimes b \,ds 
      = \Phi_{\xi\eta}(c)\otimes b.
   \]
   Moreover, $\|\tilde{\xi}\|=\|\xi\|$ in $L^2(G,\M(A_e\otmax B_e))$.  
   For $f\in C_c(\cA)$ and $g\in C_c(\cB)$ one checks
   \[
      \Phi_{\tilde{\xi}\tilde{\eta}}(f\oslash^d g) 
      = (\Phi_{\xi\eta}\circ f)\oslash^d g,
   \]
   and hence
   \[
      \|\Phi_{\tilde{\xi}\tilde{\eta}}(f\oslash^d g) - f\oslash^d g\|_1
      \;\leq\; \|\Phi_{\xi\eta}\circ f - f\|_1 \, \|g\|_\infty.
   \]

   Let $\{\xi_i\},\{\eta_i\}\subseteq C_c(G,\M(A_e))$ witness the $L^1$-approximation property of $\cA$.  
   Then $\|\tilde{\xi}_i\|\|\tilde{\eta}_i\| = \|\xi_i\|\|\eta_i\|\leq M$, and for all $f\in C_c(\cA)$, $g\in C_c(\cB)$,
   \[
      \lim_i \|\Phi_{\tilde{\xi}_i\tilde{\eta}_i}(f\oslash^d g) - f\oslash^d g\|_1 = 0.
   \]
   Taking $\Gamma:=\{f\oslash^d g: f\in C_c(\cA),\, g\in C_c(\cB)\}$ and applying Lemma~\ref{lemma: convergence on gamma suffices}, we conclude that $\cA\otmax^d\cB$ has the $L^1$-approximation property.
\end{proof}

Since the $L^1$-approximation property implies the \wcp\ \cite[Theorem~4.11]{abadie1997tensorv3}, 
Lemma~\ref{lemma: the ap and diagonal tensor} yields the implication 
\eqref{item: B has Loneap}$\Rightarrow$\eqref{item: diagonal product has wcp} 
in Theorem~\ref{thm: main thm about approximation properties and amenability of Fell bundles}. To prove \eqref{item: diagonal product has wcp}$\Rightarrow$\eqref{item: beta is amenable}, we shall use the notion of (Morita) equivalence of Fell bundles from \cite[Definition~2.2]{AbFrrEquivalence}, together with the strong version of \cite[Definition~2.6]{AbBssFrrMorita} (for which we adopt the terminology ``strong equivalence'').

\begin{lemma}\label{lem:equivalence preserved under diagonal tensor}
Let $\cA=\{A_t\}_{t\in G}$, $\cB=\{B_t\}_{t\in G}$, and $\cC=\{C_t\}_{t\in G}$ be Fell bundles. 
If $\cA$ is the semidirect product bundle of a \Cstar-action and $\cB$ and $\cC$ are equivalent, then $\cA\otmax^d\cB$ and $\cA\otmax^d\cC$ are equivalent in the same sense.
\end{lemma}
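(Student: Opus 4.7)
The plan is to build an explicit equivalence bundle $\cY=\{Y_t\}_{t\in G}$ between $\cA\otmax^d\cB$ and $\cA\otmax^d\cC$ out of the given equivalence bundle $\cX=\{X_t\}_{t\in G}$ between $\cB$ and $\cC$, with fibres $Y_t:=A\delta_t\otmax X_t$. The key role of the semidirect-product hypothesis is that the underlying Banach bundle of $\cA$ is trivial, namely $A\times G$, so $C_c(\cA)\cong C_c(G,A)$; this abundance of continuous sections is what will make the topology and the structure maps of $\cY$ tractable.

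On elementary tensors I define the left $\cA\otmax^d\cB$-action, the right $\cA\otmax^d\cC$-action, and the two inner products by combining the semidirect-product operations in $\cA$ with the module actions and inner products of $\cX$, e.g.\
\[
(a\delta_s\otimes b)\cdot(a'\delta_t\otimes x) := a\alpha_s(a')\delta_{st}\otimes b\cdot x, \qquad \langle a\delta_t\otimes x,\,a'\delta_s\otimes x'\rangle_{\cC} := \alpha_{t^{-1}}(a^*a')\delta_{t^{-1}s}\otimes\langle x,x'\rangle_{\cC}.
\]
Each such formula factors as a \Star-homomorphism (or an inner-product map) on the $A$-side and the corresponding $\cX$-operation on the other, so the universal property of $\otmax$ extends them to $Y_t$. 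The Banach bundle topology on $\cY$ is then set up via the standard criterion \cite[II.13.18]{FlDr88} by prescribing as continuous sections the linear span of $t\mapsto f(t)\delta_t\otimes\xi(t)$ with $f\in C_c(G,A)$ and $\xi\in C_c(\cX)$, which is essentially Abadie's maximal tensor-product construction \cite[Proposition~3.4]{abadie1997tensorv3} specialized to the diagonal $G\hookrightarrow G\times G$.

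Finally one verifies the axioms of a (strong) equivalence bundle (\cite[Definition~2.2]{AbFrrEquivalence} and the strong version of \cite[Definition~2.6]{AbBssFrrMorita}). Associativity, sesquilinearity, and fullness are routine once the topology is fixed, and the strong version passes through because the $A$-factor is left untouched. The hard part will be positivity of the inner products together with the matching norm estimate: one must check that $\alpha_{t^{-1}}(a^*a)\otimes\langle x,x\rangle_{\cC}$ is positive in $A\otmax C_e$ (which holds because positivity is preserved by $\otmax$), and then reconcile the $\otmax$-norm on each $Y_t$ with the norm inherited from viewing $\cA\otmax^d\cB$ and $\cA\otmax^d\cC$ as Fell bundles, so that norm estimates propagate correctly. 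Once these reconcile, $\cY$ is the sought equivalence, of the same strength as $\cX$.
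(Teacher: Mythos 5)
Your overall strategy --- building the equivalence bundle with fibres $A_t\otmax X_t$ --- is the same as the paper's, but you leave unresolved exactly the points that carry the real content, and the paper's device for dispatching them is worth knowing. The paper does not construct the norm, topology, and inner products on $\cY$ from scratch: it passes to the linking bundle $\bL(\cX)$ of $\cX$, observes that $B_e$ and $C_e$ are hereditary in $\bL(X_e)$ so that $\cA\otmax^d\cB$ and $\cA\otmax^d\cC$ are Fell subbundles of the honest Fell bundle $\cA\otmax^d\bL(\cX)$, and realizes $\cA\otmax^d\cX$ as a Banach subbundle thereof, with all operations obtained by restriction. This makes the norm on each $A_t\otmax X_t$ unambiguous and makes positivity of the inner products automatic. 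In your version these are genuine gaps: you only check positivity of $\langle y,y\rangle_{\cC}$ for an elementary tensor $y=a\delta_t\otimes x$, whereas the actual requirement is $\sum_{i,j}\alpha_{t^{-1}}(a_i^*a_j)\otimes\langle x_i,x_j\rangle_{\cC}\geq 0$ in $A\otmax C_e$ for arbitrary finite sums (a Schur-product-type argument for positive matrices over $\otmax$, not a consequence of ``positivity is preserved by $\otmax$'' on elementary tensors); and the reconciliation of the $\otmax$-norm on $Y_t$ with the norms of the two Fell bundles, which you defer, is precisely what you would need to prove by hand without an ambient bundle.

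You have also misplaced the role of the semidirect-product hypothesis. Abadie's tensor-product construction and the section/topology bookkeeping work for arbitrary Fell bundles, so triviality of the underlying Banach bundle of $\cA$ is not where the hypothesis enters. It enters in the fullness axioms (7R) and (7L): one must show that
\[
\cspn\bigl\{\,{}_{\cA\otmax^d\cB}\langle a\otimes x,\,b\otimes y\rangle : a\in A_t,\ b\in A_{t^{-1}},\ x\in X_t,\ y\in X_{t^{-1}},\ t\in G\,\bigr\}=A_e\otmax B_e,
\]
and for a general Fell bundle $\cA$ the left-hand side only involves the ideals $\cspn A_tA_{t^{-1}}\sbe A_e$, which need not exhaust $A_e$ fibrewise. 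For a semidirect product bundle one writes $z\delta_e=(z_1\delta_t)(\alpha_{t^{-2}}(z_2)^*\delta_{t^{-1}})^*$ to see that $A_tA_{t^{-1}}$ already spans $A_e$ for every $t$, which combined with fullness of $\cX$ gives $A_e\otmax B_e$. Labelling fullness ``routine'' skips the one step where the hypothesis of the lemma is actually used.
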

\begin{proof}[Sketch of proof]
The complete proof is deferred to Appendix~\ref{sec: deferred proofs}. The main idea is to view $\cA\otmax^d \cB$ and $\cA\otmax^d \cC$ as Fell subbundles of $\cA\otmax^d \bL(\cX)$, where $\bL(\cX)$ is the linking bundle associated with the $\cB$--$\cC$ equivalence bundle $\cX$. The $\cA\otmax^d\cB$--$\cA\otmax^d\cC$ equivalence is implemented by a subbundle $\cA\otmax^d \cX\subset \cA\otmax^d \bL(\cX) $.
\end{proof}

\begin{corollary}
In Theorem~\ref{thm: main thm about approximation properties and amenability of Fell bundles}, the implication 
\eqref{item: diagonal product has wcp}$\;\Rightarrow\;$\eqref{item: beta is amenable} holds.
\end{corollary}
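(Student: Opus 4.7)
My plan is to invoke Theorem~\ref{thm: equivalence of amenability for cstar actions} applied to the \Cstar-action $\beta$ on $\bk(\cB)$: it suffices to show that for every \Cstar-action $\gamma$ of $G$ on a \Cstar-algebra $C$, the diagonal action $\beta\otmax^d\gamma$ on $\bk(\cB)\otmax C$ has the \wcp. I fix such a $\gamma$ and let $\cA:=C\rtimes_\gamma G$ denote the associated semidirect product Fell bundle; by hypothesis \eqref{item: diagonal product has wcp}, $\cA\otmax^d\cB$ already has the \wcp.

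The strategy is to transport this \wcp\ twice: first to $\cA\otmax^d(\bk(\cB)\rtimes_\beta G)$ via an equivalence of Fell bundles, and then to the semidirect product bundle of the \Cstar-action $\beta\otmax^d\gamma$ via a concrete isomorphism. For the first step I would use Abadie's construction in \cite{Ab03}, which realizes any Fell bundle $\cB$ as strongly equivalent to the semidirect product bundle $\bk(\cB)\rtimes_\beta G$ of its canonical kernel action. Since $\cA$ is itself a semidirect product bundle, Lemma~\ref{lem:equivalence preserved under diagonal tensor} produces a strong equivalence
\[
   \cA\otmax^d\cB \;\sim\; \cA\otmax^d(\bk(\cB)\rtimes_\beta G).
\]
Strong equivalence of Fell bundles preserves both the full and the reduced cross-sectional \Cstar-algebras and is compatible with the canonical quotient maps $\lambda^{(\cdot)}$; hence the \wcp\ transfers to $\cA\otmax^d(\bk(\cB)\rtimes_\beta G)$.

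For the second step I would write down the fiberwise identification $(C\delta_t)\otmax(\bk(\cB)\delta_t)\cong(\bk(\cB)\otmax C)\delta_t$ and verify, via
\[
   (c\gamma_s(c'))\otimes(k\beta_s(k')) \;=\; (c\otimes k)\,(\beta\otmax^d\gamma)_s(c'\otimes k'),
\]
that it assembles into a Fell-bundle isomorphism
\[
   \cA\otmax^d(\bk(\cB)\rtimes_\beta G)\;\cong\;(\bk(\cB)\otmax C)\rtimes_{\beta\otmax^d\gamma}G.
\]
Since the full and reduced cross-sectional \Cstar-algebras of a semidirect product bundle coincide with the full and reduced crossed products of the underlying \Cstar-action, the \wcp\ on the right-hand side is exactly the \wcp\ of the \Cstar-action $\beta\otmax^d\gamma$. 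Applying Theorem~\ref{thm: equivalence of amenability for cstar actions} then yields the \Cstar-amenability of $\beta$.

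The routine part is the explicit Fell-bundle isomorphism in the second step. The main subtlety is the (folklore) fact that strong equivalence of Fell bundles preserves the \wcp; I would extract this from the treatment of linking bundles and the induced Morita equivalence of cross-sectional \Cstar-algebras in \cite{AbFrrEquivalence, AbBssFrrMorita}, together with the naturality of the regular-representation quotients $\lambda^{(\cdot)}$.
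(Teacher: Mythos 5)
Your proposal is correct and follows essentially the same route as the paper: test against semidirect product bundles of arbitrary \Cstar-actions, use the equivalence between $\cB$ and the semidirect product bundle of its kernel action $\beta$ together with Lemma~\ref{lem:equivalence preserved under diagonal tensor} to transfer the \wcp\ (the paper cites \cite[Theorem~3.4]{AbBssFrrMorita} and \cite[Corollary~4.20]{AbFrrEquivalence} for these two steps), identify the resulting diagonal bundle with the semidirect product bundle of $\gamma\otmax^d\beta$, and conclude via Theorem~\ref{thm: equivalence of amenability for cstar actions}. The only cosmetic difference is that you invoke \emph{strong} equivalence where plain equivalence of Fell bundles already suffices for the \wcp\ transfer.
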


\begin{proof}
Let $\alpha$ be a \Cstar-action of $G$ on a \Cstar-algebra $A$, and let $\cA$ denote its semidirect product bundle. 
Recall that $\beta$ is the natural action of $G$ on $\bk(\cB)$, and let $\cC$ be its semidirect product bundle. 

By \cite[Theorem~3.4]{AbBssFrrMorita}, $\cB$ is equivalent to $\cC$, and hence, by Lemma~\ref{lem:equivalence preserved under diagonal tensor}, $\cA\otmax^d \cB$ is equivalent to $\cA\otmax^d \cC$. 
Since $\cA\otmax^d \cB$ has the \wcp\ (equivalently, it is $\red$-amenable in the sense of \cite[Definition~4.9]{AbFrrEquivalence}), \cite[Corollary~4.20]{AbFrrEquivalence} implies that $\cA\otmax^d \cC$ also has the \wcp. 

Now, $\cA\otmax^d \cC$ is precisely the semidirect product bundle of the diagonal action $\alpha\otmax^d \beta$. 
Thus, the reduced representation of $\cA\otmax^d \cC$ is the canonical map
\[
   C^*(\cA\otmax^d \cC) \;=\; (A\otmax^d \bk(\cB))\rtimes G
   \;\longrightarrow\;
   (A\otmax^d \bk(\cB))\rtimes_{\red} G.
\]
Therefore, $\alpha\otmax^d \beta$ has the \wcp, and the desired conclusion follows from Theorem~\ref{thm: equivalence of amenability for cstar actions}.
\end{proof}

To complete the proof of Theorem~\ref{thm: main thm about approximation properties and amenability of Fell bundles} 
it remains to show that 
\eqref{item: beta is amenable}$\;\Rightarrow\;$\eqref{item: B has 1 pap}. 
We divide the proof into two lemmas. 
The first one provides an alternative formulation of the positive approximation property of Exel and Ng. 
Note that in the statement below we do not assume $\{\|\xi_i\|_2 : i\in I\}$ to be bounded.

\begin{lemma}\label{lemma: symmetric nets}
Let $\cA=\{A_t\}_{t\in G}$ be a Fell subbundle of $\cB=\{B_t\}_{t\in G}$ such that $A_e$ is hereditary in $B_e$. 
Suppose there exists a net $\{\xi_i\}_{i\in I}\subseteq C_c(G,\M(B_e))$ such that 
$\{\Phi_{\xi_i}\}_{i\in I}$ converges to $\id_\cB$ on compact slices 
(as is the case when $\cB$ has the positive approximation property of Exel and Ng). 
Then $\cA$ has the positive $1$-approximation property.
\end{lemma}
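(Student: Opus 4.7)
The plan is to construct the required net $\{\zeta_j\}_{j\in J}\subseteq C_c(G,A_e)$ in three stages: cornering the given $\xi_i$'s into $A_e$, recovering the approximation of $\id_\cA$ by a diagonal argument, and finally normalizing to secure the bound $\|\zeta_j\|_2\le 1$ required by the positive $1$-approximation property.

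For the cornering, I would fix a positive approximate unit $(e_\lambda)_{\lambda\in\Lambda}$ of $A_e$ with $0\le e_\lambda\le 1$, set $w_\lambda:=e_\lambda^{1/2}$, and define
\[
    \zeta_{i,\lambda}(s)\;:=\;w_\lambda\,\xi_i(s)\,w_\lambda.
\]
Because $A_e$ is hereditary in $B_e$, a Cohen factorization argument yields $A_e\cdot \M(B_e)\cdot A_e\subseteq A_e$, so each $\zeta_{i,\lambda}$ belongs to $C_c(G,A_e)$. A direct computation then gives $\Phi_{\zeta_{i,\lambda}}(b)=w_\lambda\,\Phi_{\xi_i}(w_\lambda b w_\lambda)\,w_\lambda$ for $b\in B_t$. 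For fixed $\lambda$, the hypothesis forces $\Phi_{\zeta_{i,\lambda}}(b)\to e_\lambda b e_\lambda$ on compact slices of $\cA$; letting $\lambda\to\infty$ then gives $e_\lambda b e_\lambda\to b$ by the approximate-unit property in each fiber $A_t$. A standard diagonal argument, together with Lemma~\ref{lemma: convergence on gamma suffices} applied to $\Gamma:=C_c(\cA)$, delivers a net $\{\zeta_j\}\subseteq C_c(G,A_e)$ with $\Phi_{\zeta_j}\to\id_\cA$ on compact slices.

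The remaining and hardest step is enforcing $\|\zeta_j\|_2\le 1$, because the raw estimate $\|\zeta_{i,\lambda}\|_2\le \|\xi_i\|_2$ is uncontrolled under the hypothesis. My plan is to introduce a functional-calculus truncation: set $T_j:=\int \zeta_j(s)^*\zeta_j(s)\,ds\in A_e^+$ and
\[
  \tilde{\zeta}_j(s)\;:=\;\zeta_j(s)\,g(T_j),
\]
with $g\in C_0([0,\infty))$ a regularized version of $x\mapsto\min(1,x^{-1/2})$ chosen so that $g(T_j)\,T_j\,g(T_j)\le 1$; this immediately yields $\|\tilde{\zeta}_j\|_2\le 1$, and, since $T_j\in A_e$, the truncated family remains in $C_c(G,A_e)$. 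The delicate point is to verify that $\Phi_{\tilde{\zeta}_j}\to\id_\cA$ survives the truncation: this requires interlocking the choices of $\lambda$, $i$, and the truncation level so that, on every compact slice $b$ of $\cA$, the factor $g(T_j)$ behaves asymptotically as an approximate unit relative to $b$. Securing this compatibility—rather than the cornering and diagonal assembly, which are routine—is the main technical obstacle of the argument.
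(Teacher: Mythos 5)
Your cornering step is sound and matches the paper's: the paper likewise replaces $\xi$ by $\eta(t)=a\,\xi(t)\,a$ for a suitable positive $a\in A_e$ with $\|a\|<1$ drawn from an approximate unit (which is a strong approximate unit for $\cA$), and uses $\Phi_\eta(b)=a\,\Phi_\xi(aba)\,a$ exactly as you do. The genuine gap is in your third stage. You correctly identify that the bound $\|\zeta\|_2\le 1$ is the crux, but your truncation plan is left unverified, and as stated it is unlikely to close: since $\Phi_{\tilde\zeta_j}(b)=g(T_j)\,\Phi_{\zeta_j}(b)\,g(T_j)$, you would need $g(T_j)$ to act asymptotically as the identity on the relevant compact slices, whereas $g$ is designed precisely to cut the spectrum of $T_j$ down from values that may be as large as $\|\xi_i\|_2^2$, which is uncontrolled. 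Without further input there is no reason the truncated net still converges to $\id_\cA$.

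The paper closes this gap without any truncation, by a small but essential trick you are missing: work with a \emph{finite} test set $S\subset C_c(\cA)$ and $\varepsilon>0$ at a time, and enlarge the test set on which $\Phi_\xi$ is required to approximate the identity so that it also contains the element $a^2$ itself (realized as $f(e)$ for some $f\in C_c(\cA)$ with $f(e)=a^2$), demanding $\|\Phi_\xi(b)-b\|<\min\{\varepsilon/2,\,1-\|a\|^2\}$ on this enlarged set. Then
\[
   \langle\eta,\eta\rangle=\int_G a\,\xi(s)^*a^2\xi(s)\,a\,ds = a\,\Phi_\xi(a^2)\,a,
\]
so $\|\eta\|_2^2\le\|\Phi_\xi(a^2)\|\le\|\Phi_\xi(a^2)-a^2\|+\|a^2\|<(1-\|a\|^2)+\|a\|^2=1$, and the norm bound comes for free from the cornered element already in hand. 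The approximation estimate for elements of $S(G)$ then follows from $\|a^2ba^2-b\|<\varepsilon/2$ and $\|a\|^2\|\Phi_\xi(aba)-aba\|<\varepsilon/2$. You should replace your truncation stage by this device; your first two stages can then be kept essentially as written, organized around finite subsets of $C_c(\cA)$ rather than a diagonal argument over the whole net.
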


\begin{proof}
It suffices to show that for every finite set $S\subseteq C_c(\cA)$ and $\varepsilon>0$ 
there exists $\eta\in C_c(G,A_e)$ with $\|\eta\|_2\leq 1$ such that 
\[
   \|\Phi_\eta(b)-b\| < \varepsilon
   \qquad \text{for all } b\in S(G):=\bigcup_{f\in S} f(G).
\]

Fix such $S$ and $\varepsilon$. 
Since every approximate unit of $A_e$ is a strong approximate unit for $\cA$, 
there exists a positive element $a\in A_e$ with $\|a\|<1$ such that 
\[
   \|aba-b\| < \tfrac{\varepsilon}{4} \qquad \forall\, b\in S(G).
\]
It follows that
\[
   \|a^2ba^2 - b\|
   \;\leq\; \|a(aba-b)a\| + \|aba-b\|
   \;<\; \tfrac{\varepsilon}{2}.
\]

Choose $f\in C_c(\cA)$ with $f(e)=a^2$, and for each $g\in S$ define $g'\in C_c(\cA)$ by $g'(s)=a g(s)a$. 
Set $S':=\{g' : g\in S\}\cup\{f\}$. 
By assumption, there exists $\xi\in C_c(G,\M(B_e))$ such that
\[
   \|\Phi_\xi(b)-b\| < \min\{\tfrac{\varepsilon}{2},\,1-\|a\|^2\}
   \qquad \forall\, b\in S'(G).
\]

Now define $\eta\in C_c(G,A_e)$ by $\eta(t)=a\xi(t)a$. 
Then for all $t\in G$ and $b\in B_t$, 
\[
   \Phi_\eta(b) = \int_G a\xi(s)^*\,aba\,\xi(s)\,a\,ds = a\,\Phi_\xi(aba)\,a.
\]

For $b\in S(G)$ set $b':=aba\in S'(G)$. 
Since $\|b'-b\|<\varepsilon/2$, we obtain
\[
   \|\Phi_\eta(b)-b\|
   \;\leq\; \|a\Phi_\xi(b')a - a^2ba^2\| + \|a^2ba^2-b\|
   \;<\; \|a\|^2\|\Phi_\xi(b')-b'\| + \tfrac{\varepsilon}{2}
   \;<\; \varepsilon.
\]

Finally, note that
\[
   \langle \eta,\eta\rangle = a\Phi_\xi(a^2)a,
\]
and since $a^2\in S'(G)$, we have
\[
   \|\eta\|^2
   \;\leq\; \|\Phi_\xi(a^2)\|
   \;\leq\; \|\Phi_\xi(a^2)-a^2\| + \|a^2\|
   < (1-\|a\|^2)+\|a\|^2 = 1.
\]
Thus $\eta$ has the desired properties.
\end{proof}

\begin{lemma}\label{lemma: pmap and the strong equivalente}
    The positive $1$-approximation property of Exel and Ng is preserved under strong equivalence of Fell bundles.
\end{lemma}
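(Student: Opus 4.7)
The plan is to push the positive $1$-approximation witness of $\cA$ across the equivalence bimodule to produce one for $\cB$. Let $\cX=\{X_t\}_{t\in G}$ be an $\cA$--$\cB$ strong equivalence bundle and let $\bL=\bL(\cX)$ be its linking Fell bundle, so that $\cA$ and $\cB$ sit as complementary full hereditary Fell sub-bundles of $\bL$ and, matricially,
\[
\bL_e=\bigl(\begin{smallmatrix} A_e & X_e \\ X_e^* & B_e\end{smallmatrix}\bigr),
\qquad
\bL_t\supseteq \bigl(\begin{smallmatrix} A_t & X_t \\ 0 & B_t\end{smallmatrix}\bigr).
\]
Using the strong-equivalence hypothesis I would fix a net $\{x_\lambda\}_{\lambda\in\Lambda}\subseteq X_e$ with $\|x_\lambda\|\leq 1$ such that $\langle x_\lambda,x_\lambda\rangle_B=x_\lambda^*x_\lambda$ is an approximate unit of $B_e$ (a standard fact about imprimitivity bimodules, cf.\ \cite{AbBssFrrMorita}); since every approximate unit of $B_e$ is a strong approximate unit for $\cB$, the double conjugation $b\mapsto \langle x_\lambda,x_\lambda\rangle_B\,b\,\langle x_\lambda,x_\lambda\rangle_B$ converges to the identity uniformly on compact subsets of $\cB$.

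Given a positive $1$-AP witness $\{\xi_i\}_{i\in I}\subseteq C_c(G,A_e)$ for $\cA$, I would define
\[
\eta_{i,\lambda}\colon G\to B_e,\qquad \eta_{i,\lambda}(t):=x_\lambda^*\,\xi_i(t)\,x_\lambda,
\]
which lies in $C_c(G,B_e)$ by continuity of multiplication in $\bL_e$. For the norm bound, the inequality $0\leq x_\lambda x_\lambda^*\leq 1$ in $\M(A_e)$ yields $\xi_i(s)^*(x_\lambda x_\lambda^*)\xi_i(s)\leq \xi_i(s)^*\xi_i(s)$ in $A_e$, and integrating gives
\[
\int_G \eta_{i,\lambda}(s)^*\eta_{i,\lambda}(s)\,ds
=x_\lambda^*\!\left(\int_G \xi_i(s)^*(x_\lambda x_\lambda^*)\xi_i(s)\,ds\right)\!x_\lambda
\leq x_\lambda^*\,\langle\xi_i,\xi_i\rangle\, x_\lambda,
\]
whence $\|\eta_{i,\lambda}\|_2^2\leq \|x_\lambda\|^2\|\xi_i\|_2^2\leq 1$.

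The algebraic crux is the factorization identity
\[
\Phi_{\eta_{i,\lambda}}(b)=x_\lambda^*\,\Phi_{\xi_i}(x_\lambda b x_\lambda^*)\,x_\lambda\qquad(b\in B_t),
\]
which is forced by the matrix picture of $\bL_t$: for $b\in B_t$ the element $x_\lambda b x_\lambda^*$ lies in $X_e\cdot B_t\cdot X_e^*\subseteq A_t$, and unfolding the integrand of $\Phi_{\eta_{i,\lambda}}(b)=\int_G \eta_{i,\lambda}(s)^*b\,\eta_{i,\lambda}(s^{-1}t)\,ds$ produces exactly $x_\lambda^*[\xi_i(s)^*(x_\lambda b x_\lambda^*)\xi_i(s^{-1}t)]x_\lambda$. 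For any $f\in C_c(\cB)$ the map $g_\lambda(t):=x_\lambda f(t)x_\lambda^*$ belongs to $C_c(\cA)$, so for each fixed $\lambda$ the positive $1$-AP of $\cA$ yields $\Phi_{\xi_i}\circ g_\lambda\to g_\lambda$ uniformly, and therefore
\[
\Phi_{\eta_{i,\lambda}}\circ f\;\xrightarrow[i]{}\;\langle x_\lambda,x_\lambda\rangle_B\,f(\cdot)\,\langle x_\lambda,x_\lambda\rangle_B\;\xrightarrow[\lambda]{}\; f
\]
uniformly in $t$. A standard $\varepsilon/2$ diagonalization then extracts a single subnet $\{\eta_{i(\lambda),\lambda}\}_{\lambda\in\Lambda}$ which, together with the uniform bound $\|\eta_{i(\lambda),\lambda}\|_2\leq 1$, is a positive $1$-AP witness for $\cB$.

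The main technical hurdle I anticipate is producing the norm-one approximate unit $\{x_\lambda\}\subseteq X_e$ (which is exactly what distinguishes \emph{strong} equivalence from weaker Morita-equivalence notions in this setting) and verifying that the induced operations behave as dictated by the matrix picture of $\bL$, in particular the inclusion $x_\lambda\,\cB\,x_\lambda^*\subseteq \cA$; once these structural ingredients are in place, the norm estimate, the factorization identity, and the diagonal argument are routine.
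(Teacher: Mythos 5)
Your overall strategy (transport the witness $\{\xi_i\}$ across the equivalence bimodule via $\eta(s)=\langle x,\xi_i(s)x\rangle_\cB$, verify $\|\eta\|_2\le 1$, and use the factorization of $\Phi_\eta$ through $\Phi_{\xi_i}$) is the same as the paper's, but it rests on a false structural claim. You assume there is a net $\{x_\lambda\}\subseteq X_e$ of \emph{single} elements with $\|x_\lambda\|\le 1$ such that $\langle x_\lambda,x_\lambda\rangle_\cB$ is an approximate unit of $B_e$, and you call this a standard fact about imprimitivity bimodules. It is not true: already for the equivalence $\bC\sim_M \bM_n(\bC)$ implemented by $X=\bC^n$, each $\langle x,x\rangle_\cB$ is a rank-one positive operator, so no net of single elements can approximate the identity of $\bM_n(\bC)$ in norm. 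What \emph{is} true (and is exactly what the paper invokes, via \cite[Lemma~2.8]{AbFrrEquivalence}) is that finite sums $b=\sum_{k=1}^n\langle x_k,x_k\rangle_\cB$ with $\|b\|<1$ can be chosen so that $b\,u(r)\,b$ approximates $u(r)$ uniformly over the relevant compact slices. This is not a cosmetic change: replacing the single $x_\lambda$ by a finite family forces the transported section to be $\eta(s)=\sum_k\langle x_k,\xi_i(s)x_k\rangle_\cB$, the factorization identity becomes a double sum $\Phi_\eta(u(r))=\sum_{k,l}\langle x_k,\Phi_{\xi_i}({}_\cA\langle x_ku(r),x_l\rangle)x_l\rangle_\cB$ with $n^2$ error terms to control, and--crucially--your norm estimate $\|\eta\|_2\le 1$ no longer follows from $x x^*\le 1$; one needs the matrix inequality obtained by viewing $X^n$ as an $\bM_n(A_e)$--$B_e$ bimodule, as in the paper.

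Two further points. First, even granting your setup, the limit $\Phi_{\eta_{i,\lambda}}\circ f\to \langle x_\lambda,x_\lambda\rangle_\cB f(\cdot)\langle x_\lambda,x_\lambda\rangle_\cB\to f$ is an iterated limit, and to extract a genuine positive $1$-AP witness you must also reduce to a dense family of sections of $\cB$ (the paper uses $\Gamma=\{r\mapsto\langle x,f(r)y\rangle_\cB\}$ together with Lemma~\ref{lemma: convergence on gamma suffices}); your diagonalization is plausible but should be run over finite sets of test sections and $\varepsilon$'s rather than over a single $f$. Second, your inclusion $x_\lambda\,\cB\,x_\lambda^*\subseteq\cA$ and the identity $\Phi_{\eta}(b)=x^*\Phi_{\xi_i}(xbx^*)x$ are correct in spirit, but in the paper's formulation the relevant elements are ${}_\cA\langle x_k u(r),x_l\rangle\in A_r$, which is where the strong-equivalence axioms (fullness of the $\cA$-valued inner product on each fiber) are actually used.
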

\begin{proof}[Sketch of proof]
The detailed computations are given in Appendix~\ref{sec: deferred proofs}. The proof relies on using a strong $\cA$--$\cB$ equivalence bundle $\cX$ to transfer the nets $\{\xi_i\}$ witnessing the positive 1-AP of $\cA$ to corresponding nets for $\cB$. Using Lemma~\ref{lemma: convergence on gamma suffices} and the properties of the inner products in $\cX$, we explicitly construct an approximating net of elements $\eta \in C_c(G, B_e)$ bounded by 1, demonstrating that the set of such approximants is nonempty.
\end{proof}

\begin{proof}[Proof of Theorem~\ref{thm: main thm about approximation properties and amenability of Fell bundles}]
As we have seen, we already have 
\eqref{item: B has 1 pap}$\Rightarrow$\eqref{item: B has pap}$\Rightarrow$\eqref{item: B has ap}$\Rightarrow$\eqref{item: B has Loneap}$\Rightarrow$\eqref{item: diagonal product has wcp}$\Rightarrow$\eqref{item: beta is amenable}.
Assume now that \eqref{item: beta is amenable} holds.
Combining Theorems~2.13 and 3.2 of \cite{ozawa2021characterizations}, we obtain that $\beta$ has Exel--Ng's approximation property for \Cstar-actions. 
By definition, this means that the semidirect product bundle $\cC$ of $\beta$ has the positive approximation property of Exel and Ng.
Recall from \cite[Theorem~3.5]{AbBssFrrMorita} that $\bk(\cB)$ contains a \Cstar-ideal $\bK$ such that the semidirect product bundle $\cA$ of the restricted (partial) action $\alpha:=\beta|_\bK$ is strongly Morita equivalent to $\cB$.
Moreover, by \cite[Proposition~4.2]{AbFrrEquivalence}, $\cA$ is a hereditary Fell subbundle of $\cC$.
Applying Lemma~\ref{lemma: symmetric nets}, we deduce that $\cA$ has the positive $1$-approximation property of Exel and Ng.
Finally, since $\cA$ is strongly equivalent to $\cB$ by \cite[Theorem~3.5]{AbBssFrrMorita}, Lemma~\ref{lemma: pmap and the strong equivalente} implies that $\cB$ itself has the positive $1$-approximation property.
\end{proof}

\begin{definition}\label{def:FellBundleAmenable}
    A Fell bundle $\cB$ is called \emph{\Cstar-amenable}  if it satisfies any (and hence all) of the equivalent conditions listed in Theorem~\ref{thm: main thm about approximation properties and amenability of Fell bundles}. 
    \vskip 0.5pc
    A \Cstar-partial action is called \emph{\Cstar-amenable} if its associated semidirect product Fell bundle is \Cstar-amenable.
\end{definition}

\begin{remark}\label{rem:FellBundleAmenable}
    In \cite{ExNg}, Exel and Ng defined a Fell bundle to be \emph{amenable} if the canonical map 
    \[
        \lambda^\cB \colon C^*(\cB) \longrightarrow C^*_\red(\cB)
    \]
    is injective. 
    Their notion of amenability therefore coincides with what we call in this paper the \emph{weak containment property} (\wcp). 
    To avoid ambiguity, we reserve the term \emph{amenability} for the stronger concept introduced in Definition~\ref{def:FellBundleAmenable}, which is expressed in terms of approximation properties and aligns more closely with the classical notion of amenability for group actions in the sense of Anantharaman--Delaroche.
\end{remark}

\begin{remark}
    Assume $\alpha$ is a \Cstar-partial action of $G$ on $A$, let $\cB$ denote its semidirect product bundle and let $\beta$ be the natural action of $G$ on $\bk(\cB)$. 
    Then $\beta$ is a Morita enveloping action of $\alpha$ \cite[Theorem~6.1]{Ab03}, and Morita enveloping actions are unique up to Morita equivalence \cite[Proposition~6.3]{Ab03}.
    In particular, if $\alpha$ is already a \Cstar-action, then $\beta$ is Morita equivalent to $\alpha$.
    Since \Cstar-amenability is invariant under Morita equivalence of \Cstar-actions \cite[Proposition~3.20]{BssEff_amenability}, Definition~\ref{def:FellBundleAmenable} is consistent with the usual notion of amenability for \Cstar-actions.
    More generally, a \Cstar-partial action $\alpha$ is \Cstar-amenable if and only if any (equivalently, all) of its Morita enveloping actions are \Cstar-amenable.

    Each of the equivalent conditions in Theorem~\ref{thm: main thm about approximation properties and amenability of Fell bundles} can be reformulated in terms of $\alpha$. 
    For instance, for \Cstar-actions the positive approximation property of Exel and Ng coincides with the AP of \cite[Definition~4.3]{BssEff_amenability}. 
    (To get the equivalence, one may apply Lemma~\ref{lemma: convergence on gamma suffices} with $\Gamma \subset C_c(\cB)$ consisting of sections of the form $t \mapsto f(t)a\delta_t$, where $f\in C_c(G)$ and $a\in A$.) 
    On the other hand, some other approximation properties -- such as the QAP of \cite[Definition~4.1]{BssEff_amenability} or the weak versions of the AP and the QAP -- do not correspond to the translated versions of the properties in Theorem~\ref{thm: main thm about approximation properties and amenability of Fell bundles}.
\end{remark}

\begin{remark}
    Our definition of amenability for Fell bundles naturally induces a notion of amenability for twisted \Cstar-actions, and even for twisted \Cstar-partial actions: given such a twisted partial action, we assign to it the corresponding (semidirect product) Fell bundle, and we then say that the twisted partial action is amenable if the associated Fell bundle is \Cstar-amenable. 

    More generally, given any construction that associates to an object $X$ a Fell bundle $F(X)$, it is natural to declare $X$ \emph{\Cstar-amenable} whenever $F(X)$ is \Cstar-amenable.
\end{remark}

There are a number of interesting consequences we think worth mentioning.

\begin{corollary}[Lemma~\ref{lemma: the ap and diagonal tensor}]
    If $\cA$ and $\cB$ are Fell bundles over the same group and one of them is \Cstar-amenable, then $\cA\otmax^d\cB$ is \Cstar-amenable. 
    In particular, amenability is preserved under taking diagonal tensor products with arbitrary Fell bundles.
\end{corollary}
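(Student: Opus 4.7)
The proof is essentially a direct translation of Lemma~\ref{lemma: the ap and diagonal tensor} through the equivalence established in Theorem~\ref{thm: main thm about approximation properties and amenability of Fell bundles}. The plan is as follows.

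By symmetry of the diagonal tensor product construction $\otmax^d$ on Fell bundles over $G$, it suffices to treat the case where $\cA$ is \Cstar-amenable. Applying Theorem~\ref{thm: main thm about approximation properties and amenability of Fell bundles}, specifically the equivalence \eqref{item: B has 1 pap}$\Leftrightarrow$\eqref{item: B has Loneap}, we translate the hypothesis ``$\cA$ is \Cstar-amenable'' into the statement that $\cA$ has the $L^1$-approximation property of Abadie (indeed it has the stronger positive $1$-approximation property, but weakening to $L^1$-AP is what is relevant here).

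Next, I would invoke Lemma~\ref{lemma: the ap and diagonal tensor} directly: since $\cA$ has the $L^1$-AP, the diagonal tensor product $\cA\otmax^d\cB$ also has the $L^1$-AP, regardless of whether $\cB$ satisfies any approximation property. Finally, appealing once more to Theorem~\ref{thm: main thm about approximation properties and amenability of Fell bundles}, the $L^1$-AP on $\cA\otmax^d\cB$ (condition~\eqref{item: B has Loneap}) is equivalent to \Cstar-amenability, which yields the conclusion.

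There is essentially no obstacle here, since all the heavy lifting has already been done: Lemma~\ref{lemma: the ap and diagonal tensor} provides the transfer of the approximation property to the diagonal tensor product, and Theorem~\ref{thm: main thm about approximation properties and amenability of Fell bundles} provides the dictionary between the $L^1$-AP and \Cstar-amenability. The only point deserving a brief word is the asymmetry in Lemma~\ref{lemma: the ap and diagonal tensor}'s formulation (``at least one of them''), which is built into the fact that the tensorial approximation $\Phi_{\tilde\xi\tilde\eta}$ is constructed from multipliers of $A_e$ tensored with $1$; swapping the roles of $\cA$ and $\cB$ gives the symmetric statement used here.
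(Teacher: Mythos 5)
Your proposal is correct and is exactly the argument the paper intends: the corollary is stated with Lemma~\ref{lemma: the ap and diagonal tensor} as its proof, combined with the dictionary of Theorem~\ref{thm: main thm about approximation properties and amenability of Fell bundles} between the $L^1$-AP and \Cstar-amenability. Your remark on the symmetry of $\otmax^d$ (needed because the lemma's written proof only treats the case where $\cA$ has the $L^1$-AP) is a worthwhile clarification, but it does not constitute a different route.
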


The positive AP can be weakened to an equivalent (unbounded) condition.

\begin{corollary}\label{cor: unbounded pap}
     A Fell bundle $\cB=\{B_t\}_{t\in G}$ is \Cstar-amenable if and only if there exists a net $\{\xi_i\}_{i\in I}\sbe C_c(G,\M(B_e))$ such that for all $f\in C_c(\cB)$,
     \[
         \lim_i \|\Phi_{\xi_i}\circ f - f\|_\infty = 0.
     \]
\end{corollary}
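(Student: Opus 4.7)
\medskip

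\noindent\textbf{Proof plan for Corollary~\ref{cor: unbounded pap}.} The statement is a biconditional, and the forward direction is essentially immediate from the main theorem, whereas the reverse direction is where the actual content lies. The plan is to separate these two clearly.

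For the forward direction, I would simply invoke Theorem~\ref{thm: main thm about approximation properties and amenability of Fell bundles}: \Cstar-amenability of $\cB$ gives the positive $1$-approximation property, which furnishes a net $\{\xi_i\}\subseteq C_c(G,B_e)\subseteq C_c(G,\M(B_e))$ with $\|\xi_i\|_2\leq 1$ and $\Phi_{\xi_i}\to \id_\cB$ uniformly on compact slices. In particular, $\|\Phi_{\xi_i}\circ f-f\|_\infty\to 0$ for every $f\in C_c(\cB)$, which is exactly the condition stated in the corollary (indeed, it is much more, since we obtain an $L^2$-bounded net).

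The reverse direction is the interesting one: we are handed a net $\{\xi_i\}\subseteq C_c(G,\M(B_e))$ for which no boundedness of $\|\xi_i\|_2$ is assumed, and we must nevertheless conclude \Cstar-amenability. The key observation is that this is precisely the setting of Lemma~\ref{lemma: symmetric nets}: that lemma applies to any Fell subbundle $\cA$ of $\cB$ whose unit fibre is hereditary in $B_e$, and does not require any a priori $L^2$-bound on the approximating net. Taking $\cA=\cB$ (with $A_e=B_e$, trivially hereditary in itself), the hypothesis $\|\Phi_{\xi_i}\circ f-f\|_\infty\to 0$ for all $f\in C_c(\cB)$ is exactly the convergence of $\Phi_{\xi_i}$ to $\id_\cB$ uniformly on compact slices. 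The lemma then produces a new net witnessing the positive $1$-approximation property of $\cB$, and Theorem~\ref{thm: main thm about approximation properties and amenability of Fell bundles} delivers \Cstar-amenability.

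The main subtlety, and where I expect the only real content to sit, is the step of suppressing the boundedness hypothesis. This is already handled in the proof of Lemma~\ref{lemma: symmetric nets}, where one conjugates $\xi_i$ by a positive element $a\in A_e$ of norm strictly less than $1$ and exploits the strict inequality $\|a\|^2<1$ together with the convergence $\Phi_\xi(a^2)\to a^2$ to force $\|a\xi a\|_2^2=\|a\Phi_\xi(a^2)a\|<1$. This is precisely the mechanism by which an unbounded net is turned into a normalised one, so no further argument is needed here.
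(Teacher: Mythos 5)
Your proposal is correct and follows exactly the paper's own argument: the forward direction is immediate from the positive $1$-approximation property in Theorem~\ref{thm: main thm about approximation properties and amenability of Fell bundles}, and the converse is Lemma~\ref{lemma: symmetric nets} applied with $\cA=\cB$, whose statement indeed imposes no $L^2$-bound on the net. Your additional remarks correctly locate where the unboundedness is absorbed (the conjugation by $a$ with $\|a\|<1$ inside the proof of that lemma), but no new argument is needed beyond citing the lemma.
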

\begin{proof}
    The existence of such a net is clearly weaker than the positive $1$-approximation property.  
    For the converse, apply Lemma~\ref{lemma: symmetric nets} with $\cA=\cB$.
\end{proof}

Using the corollary above one could give a (quite long but) direct proof of the following.

\begin{corollary}\label{cor: invariance under equivalence}
     The \Cstar-amenability of Fell bundles (respectively, \Cstar-partial actions) is preserved under equivalence of Fell bundles (of \Cstar-partial actions, see \cite[Definition 4.4]{Ab03}).
\end{corollary}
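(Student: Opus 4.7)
The plan is to reduce the statement to the invariance of \Cstar-amenability for \Cstar-actions under Morita equivalence, via characterization \eqref{item: beta is amenable} of Theorem~\ref{thm: main thm about approximation properties and amenability of Fell bundles}. That characterization is intrinsic enough to transport across equivalences: a Fell bundle is \Cstar-amenable precisely when the canonical action on its algebra of kernels is \Cstar-amenable, and the latter is already known to be a Morita invariant of \Cstar-actions by \cite[Proposition~3.20]{BssEff_amenability}.

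First, given equivalent Fell bundles $\cA$ and $\cB$ over $G$, let $\beta^\cA$ and $\beta^\cB$ denote the canonical \Cstar-actions of $G$ on $\bk(\cA)$ and $\bk(\cB)$ respectively, and let $\cA'$ and $\cB'$ be their semidirect product bundles. By \cite[Theorem~3.5]{AbBssFrrMorita}, $\cA'$ is equivalent to $\cA$ and $\cB'$ is equivalent to $\cB$. Using transitivity of the equivalence relation for Fell bundles \cite[Theorem~A.1]{AbBssFrrMorita} together with our hypothesis $\cA\sim\cB$, we obtain $\cA'\sim\cB'$. By definition of Morita equivalence of \Cstar-actions (see \cite[Definition~4.4]{Ab03}), this is exactly the statement that $\beta^\cA$ and $\beta^\cB$ are Morita equivalent as \Cstar-actions.

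Second, I invoke \cite[Proposition~3.20]{BssEff_amenability} to conclude that $\beta^\cA$ is \Cstar-amenable if and only if $\beta^\cB$ is. By characterization \eqref{item: beta is amenable} of Theorem~\ref{thm: main thm about approximation properties and amenability of Fell bundles}, this immediately translates into the equivalence of the \Cstar-amenability of $\cA$ and $\cB$. The partial action statement then follows automatically: by Definition~\ref{def:FellBundleAmenable}, \Cstar-amenability of a \Cstar-partial action is defined via its semidirect product bundle, and equivalent \Cstar-partial actions in the sense of \cite[Definition~4.4]{Ab03} have equivalent semidirect product bundles by construction, reducing this case to the Fell bundle case just established.

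The main obstacle is really just bookkeeping: one must match up the several notions of equivalence (for Fell bundles in \cite{AbFrrEquivalence,AbBssFrrMorita} and for \Cstar-(partial) actions in \cite{Ab03}) and verify that the transitivity argument above legitimately crosses between the bundle and action frameworks. None of this requires new technical work beyond invoking the compatibility results already in the literature, which is precisely why the author notes a direct approach (via the approximating nets of Corollary~\ref{cor: unbounded pap} and a transfer argument analogous to Lemma~\ref{lemma: pmap and the strong equivalente}) would be feasible but \emph{quite long}; the kernel-action route makes the corollary essentially a one-step consequence of the main theorem.
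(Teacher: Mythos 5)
Your proposal is correct and follows essentially the same route as the paper: both reduce the statement to condition \eqref{item: beta is amenable} of Theorem~\ref{thm: main thm about approximation properties and amenability of Fell bundles} and then invoke the Morita invariance of \Cstar-amenability for \Cstar-actions from \cite[Proposition~3.20]{BssEff_amenability}. The one imprecision is your claim that equivalence of the semidirect product bundles is ``by definition'' the Morita equivalence of the kernel actions --- that identification is a theorem, not a definition, and the paper avoids the detour entirely by citing \cite[Theorem~4.2]{AbBssFrrMorita}, which directly asserts that weakly equivalent Fell bundles have Morita equivalent kernel actions.
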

\begin{proof}
      Let $\cA$ and $\cB$ be weakly equivalent Fell bundles over the same group $G$, and let $\alpha$ and $\beta$ denote the respective actions on the \Cstar-algebras of kernels. 
      By \cite[Theorem 4.2]{AbBssFrrMorita}, $\alpha$ is Morita equivalent to $\beta$. 
      Combining this with \cite[Proposition 3.20]{BssEff_amenability} and Theorem~\ref{thm: main thm about approximation properties and amenability of Fell bundles}, we obtain the chain of equivalences
      \[
          \cA \text{ is \Cstar-amenable} \ \Longleftrightarrow\ \alpha \text{ is \Cstar-amenable} 
          \ \Longleftrightarrow\ \beta \text{ is \Cstar-amenable} 
          \ \Longleftrightarrow\ \cB \text{ is \Cstar-amenable}.
      \]

      The claim for \Cstar-partial actions follows since, if two \Cstar-partial actions are Morita equivalent, then their semidirect product bundles are equivalent \cite[Section 2.2.2]{AbFrrEquivalence}.
\end{proof}

\begin{corollary}\label{cor: amenability passes to certain subbundles}
    If $\cA=\{A_t\}_{t\in G}$ is a Fell subbundle of $\cB=\{B_t\}_{t\in G}$ such that $A_e$ is hereditary in $B_e$, and if $\cB$ is \Cstar-amenable, then $\cA$ is \Cstar-amenable.
\end{corollary}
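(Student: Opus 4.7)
The plan is to leverage Theorem~\ref{thm: main thm about approximation properties and amenability of Fell bundles}, which gives us the freedom to choose any of the equivalent characterizations of \Cstar-amenability, and then reduce to a direct application of Lemma~\ref{lemma: symmetric nets}. The point is that Lemma~\ref{lemma: symmetric nets} was already formulated exactly under the hypothesis that $\cA$ is a Fell subbundle of $\cB$ with $A_e$ hereditary in $B_e$, and it transfers a one-sided symmetric approximation net from $\cB$ to a positive $1$-approximation net for $\cA$.

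First, since $\cB$ is \Cstar-amenable, item~\eqref{item: B has pap} of Theorem~\ref{thm: main thm about approximation properties and amenability of Fell bundles} yields the positive approximation property of Exel and Ng: there exists a net $\{\xi_i\}_{i\in I}\subseteq C_c(G,B_e)$, with the constants $\|\xi_i\|_2^2$ uniformly bounded, such that $\{\Phi_{\xi_i}\}_{i\in I}$ converges to $\id_\cB$ uniformly on compact slices. Since $B_e\subseteq \M(B_e)$ canonically, this net may be viewed as lying in $C_c(G,\M(B_e))$, so the hypothesis of Lemma~\ref{lemma: symmetric nets} is satisfied (the uniform bound on $\|\xi_i\|_2$ is not needed for that lemma).

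Next, Lemma~\ref{lemma: symmetric nets} applies directly and delivers a net $\{\eta_j\}\subseteq C_c(G,A_e)$ with $\|\eta_j\|_2\le 1$ such that $\{\Phi_{\eta_j}\}$ converges to $\id_\cA$ uniformly on compact slices. In other words, $\cA$ has the positive $1$-approximation property, which is condition~\eqref{item: B has 1 pap} of Theorem~\ref{thm: main thm about approximation properties and amenability of Fell bundles}. Invoking that theorem once more, we conclude that $\cA$ is \Cstar-amenable.

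Since all the heavy lifting has been carried out in Lemma~\ref{lemma: symmetric nets} and Theorem~\ref{thm: main thm about approximation properties and amenability of Fell bundles}, there is no genuine obstacle; the argument is essentially a bookkeeping step. The only point to verify carefully is that the hypotheses of Lemma~\ref{lemma: symmetric nets} really match what \Cstar-amenability of $\cB$ gives us, and this is immediate from the inclusion $C_c(G,B_e)\subseteq C_c(G,\M(B_e))$.
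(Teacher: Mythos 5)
Your argument is correct and coincides with the paper's own proof: both deduce the positive approximation property of $\cB$ from Theorem~\ref{thm: main thm about approximation properties and amenability of Fell bundles}, apply Lemma~\ref{lemma: symmetric nets} to transfer the positive $1$-approximation property to $\cA$, and invoke the theorem again to conclude. The remark that the hypothesis of the lemma only requires a net in $C_c(G,\M(B_e))$ with no uniform bound is a correct and harmless extra observation.
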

\begin{proof}
    Since $\cB$ is \Cstar-amenable, it has the positive approximation property of Exel and Ng. 
    By Lemma~\ref{lemma: symmetric nets}, this implies that any Fell subbundle $\cA\subseteq \cB$ with $A_e$ hereditary in $B_e$ inherits the positive $1$-approximation property. 
    Hence $\cA$ is \Cstar-amenable by Theorem~\ref{thm: main thm about approximation properties and amenability of Fell bundles}.
\end{proof}

The next result provides a systematic way of producing amenable \Cstar-partial actions: it shows that restricting an amenable \Cstar-action to a (possibly non-invariant) \Cstar-ideal yields an amenable \Cstar-partial action.  
Moreover, up to Morita equivalence of \Cstar-partial actions, this construction accounts for all amenable \Cstar-partial actions. Indeed, by \cite{Ab03}, every \Cstar-partial action is Morita equivalent to one that is \emph{globalizable}, i.e.\ the restriction of a global \Cstar-action.  

\begin{corollary}
    Let $\alpha=(\{A_t\}_{t\in G},\{\alpha_t\}_{t\in G})$ be a \Cstar-action that is \Cstar-amenable, and let $I$ be a \Cstar-ideal of $A$ (not necessarily invariant). Then the restricted \Cstar-partial action $\alpha|_I$ (as in \cite[Proposition~4.2]{AbFrrEquivalence}) is also \Cstar-amenable.
\end{corollary}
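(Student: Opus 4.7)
\medskip

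\noindent\textbf{Proof proposal.} The plan is to reduce the statement to Corollary~\ref{cor: amenability passes to certain subbundles} via the semidirect product construction, using Definition~\ref{def:FellBundleAmenable} and Theorem~\ref{thm: main thm about approximation properties and amenability of Fell bundles} to pass between $C^*$-(partial) actions and their Fell bundles.

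First, I would set up the relevant Fell bundles. Let $\cA=\{A\delta_t\}_{t\in G}$ denote the semidirect product bundle of the global action $\alpha$, and let $\cA_I$ denote the semidirect product bundle of the restricted partial action $\alpha|_I$. Recall from \cite[Proposition~4.2]{AbFrrEquivalence} that the partial domains of $\alpha|_I$ are $I_t := I\cap \alpha_t(I)$, so the fibers of $\cA_I$ have the form $(\cA_I)_t = I_t\delta_t \subseteq A\delta_t = \cA_t$. The key point is to verify that $\cA_I$ is a Fell subbundle of $\cA$. For the algebraic structure, if $a\in I_s$ and $b\in I_t$, then
\[
   (a\delta_s)(b\delta_t) = a\,\alpha_s(b)\,\delta_{st},
\]
with $a\in I$ and $\alpha_s(b)\in \alpha_s(I)\cap \alpha_{st}(I)$, so $a\,\alpha_s(b)\in I\cdot(\alpha_s(I)\cap \alpha_{st}(I))\subseteq I\cap \alpha_{st}(I)=I_{st}$. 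Similarly $(a\delta_t)^* = \alpha_{t^{-1}}(a^*)\delta_{t^{-1}}\in I_{t^{-1}}\delta_{t^{-1}}$. For the topology, the inclusions $I_t\hookrightarrow A$ are isometric and the continuity of $t\mapsto \alpha_t$ guarantees that the bundle topology of $\cA_I$ coincides with the one inherited from $\cA$, so $\cA_I$ is indeed a Fell subbundle.

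Second, I would observe that the unit fiber $(\cA_I)_e = I$ is a $C^*$-ideal, hence in particular a hereditary $C^*$-subalgebra of $\cA_e = A$. Since $\alpha$ is $C^*$-amenable, Theorem~\ref{thm: main thm about approximation properties and amenability of Fell bundles} (applied via Definition~\ref{def:FellBundleAmenable}) gives that $\cA$ is $C^*$-amenable. Corollary~\ref{cor: amenability passes to certain subbundles} then transfers this $C^*$-amenability to $\cA_I$. By Definition~\ref{def:FellBundleAmenable}, this means that $\alpha|_I$ is $C^*$-amenable, as desired.

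The main (mild) obstacle is the identification step: ensuring that the semidirect product bundle of the restricted partial action really sits inside $\cA$ as a Fell subbundle rather than merely as a fiberwise subspace. Once this identification is made, the proof is essentially immediate from Corollary~\ref{cor: amenability passes to certain subbundles}. A one-line alternative that avoids any topology verification is also available: by \cite[Theorem~3.4]{AbBssFrrMorita}, $\alpha|_I$ is Morita equivalent, as a $C^*$-partial action, to a hereditary sub-$C^*$-action of $\alpha$ on $\bk(\cA_I)$, and Corollary~\ref{cor: invariance under equivalence} together with Corollary~\ref{cor: amenability passes to certain subbundles} then yields the conclusion without explicitly manipulating the partial domains.
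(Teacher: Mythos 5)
Your proposal is correct and follows essentially the same route as the paper: the paper's proof simply notes that the semidirect product bundle of $\alpha|_I$ is a hereditary Fell subbundle of $\cB_\alpha$ (this is the content of \cite[Proposition~4.2]{AbFrrEquivalence}) and then invokes Corollary~\ref{cor: amenability passes to certain subbundles}. Your explicit verification of the subbundle structure is a harmless elaboration of what the paper cites as known.
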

\begin{proof}
    The semidirect product bundle $\cB_{\alpha|_I}$ of $\alpha|_I$ is a hereditary Fell subbundle of $\cB_\alpha$.  
    The claim then follows directly from Corollary~\ref{cor: amenability passes to certain subbundles}.
\end{proof}

\begin{remark}
    The corollary above reflects a more general phenomenon for Fell bundles.  
    By \cite{AbFrrEquivalence,AbBssFrrMorita}, every Fell bundle is strongly equivalent to one arising from a \Cstar-partial action, and every such partial action is Morita equivalent to the restriction of a global \Cstar-action.  
    Since \Cstar-amenability is preserved under strong equivalence of Fell bundles and under passage to enveloping (global) actions, it follows that amenability of Fell bundles and \Cstar-partial actions is entirely governed by the amenability of global \Cstar-actions.
\end{remark}

\subsection{Applications to Nuclearity of Cross-Sectional Algebras}\label{ssec: amenability and nuclearity}

As a first application of our equivalence theorem, we obtain highly non-trivial permanence results regarding the nuclearity of cross-sectional \Cstar-algebras. We start by providing a streamlined, alternative proof of a theorem of F. Abadie (see \cite[Corollary 4.13]{abadie1997tensorv3} or \cite[Corolário 3.29]{AbPhd}).
Fix a Fell bundle $\cB=\{B_t\}_{t\in G}$.

\begin{theorem}\label{thm: B amenable and Be nuclear}
    If $\cB$ is \Cstar-amenable and $B_e$ is nuclear, then $C^*(\cB)$ is nuclear.
\end{theorem}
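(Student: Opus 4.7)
The strategy is to reduce the nuclearity of $C^*(\cB)$ to the nuclearity of a reduced crossed product, exploiting the Morita enveloping action of $\cB$ and Proposition~\ref{prop: SIN groups and nuclearity}.

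First, by \cite[Theorem~3.4]{AbBssFrrMorita}, the bundle $\cB$ is strongly equivalent to the semidirect product bundle $\cC$ of the canonical action $\beta$ of $G$ on $\bk(\cB)$; and by Theorem~\ref{thm: main thm about approximation properties and amenability of Fell bundles}, \Cstar-amenability of $\cB$ is equivalent to \Cstar-amenability of $\beta$. Since equivalence of Fell bundles induces Morita equivalence of the full cross-sectional \Cstar-algebras, we get $C^*(\cB)\sim C^*(\cC)=\bk(\cB)\rtimes_\beta G$. Because nuclearity is Morita-invariant, it suffices to prove that $\bk(\cB)\rtimes_\beta G$ is nuclear.

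The key observation is that $\bk(\cB)$ itself is nuclear whenever $B_e$ is. Indeed, by the construction of the \Cstar-algebra of kernels in \cite{Ab03}, $\bk(\cB)$ is isomorphic to the \Cstar-algebra of compact operators on a right Hilbert $B_e$-module $L^2(\cB)$, and hence is Morita equivalent to a hereditary \Cstar-subalgebra of $B_e$. Since nuclearity is preserved under Morita equivalence and passes to hereditary \Cstar-subalgebras, $B_e$ nuclear implies $\bk(\cB)$ nuclear.

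Now the forward direction of Proposition~\ref{prop: SIN groups and nuclearity} (which requires no hypothesis on $G$) applies to the \Cstar-amenable action $\beta$ on the nuclear \Cstar-algebra $\bk(\cB)$ and yields that $\bk(\cB)\rtimes_{\red,\beta}G$ is nuclear. Since $\beta$ is \Cstar-amenable it has the \wcp, so $\bk(\cB)\rtimes_\beta G=\bk(\cB)\rtimes_{\red,\beta}G$. Combined with the Morita equivalence of the first step, we conclude that $C^*(\cB)$ is nuclear.

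The main obstacle is the second step: identifying $\bk(\cB)$ with compacts on a Hilbert $B_e$-module in a way that is precise enough to invoke Morita invariance of nuclearity. Once this identification is in place, the remainder of the argument is a direct assembly of Theorem~\ref{thm: main thm about approximation properties and amenability of Fell bundles}, Proposition~\ref{prop: SIN groups and nuclearity}, and the cited Morita equivalence results.
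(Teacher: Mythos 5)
Your proposal is correct and follows essentially the same route as the paper: pass to the Morita enveloping action $\beta$ on $\bk(\cB)$ via the bundle equivalence, note that $\bk(\cB)$ is nuclear when $B_e$ is (the paper simply cites \cite[Corollary 5.2]{Ab03} for the identification you sketch), and conclude nuclearity of the crossed product from amenability of $\beta$. The only cosmetic difference is that the paper invokes \cite[Theorem 7.2]{BssEff_amenability} directly for nuclearity of the full crossed product, whereas you route through the forward direction of Proposition~\ref{prop: SIN groups and nuclearity} plus the \wcp; these amount to the same thing.
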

\begin{proof}
    Let $\beta$ be the natural action of $G$ on $\bk(\cB)$ and $\cC$ its semidirect product bundle.
    Then $\cB$ is equivalent to $\cC$ and $C^*(\cC)=\bk(\cB)\rtimes_\beta G$.
    By \cite[Corollary 5.2]{Ab03}, $\bk(\cB)$ is nuclear.
    Since $\beta$ is \Cstar-amenable, \cite[Theorem 7.2]{BssEff_amenability} implies that $C^*(\cC)$ is nuclear.
    Using \cite[Theorem 4.5]{AbFrrEquivalence} we get that $C^*(\cB)$ is nuclear.
\end{proof}

\begin{theorem}\label{thm: reduced algebra nuclear implies nuclear fiber} Let $\cB$ be a Fell bundle over a group $G$.
\begin{enumerate}
    \item  If $C^*_\red(\cB)$ is nuclear, then $B_e$ is nuclear. 
    \item If $G$ inner amenable and $C^*_\red(\cB)$ is nuclear, then $\cB$ is \Cstar-amenable.
\end{enumerate}
\end{theorem}
\begin{proof}
    By \cite[Proposition 4.13]{AbFrrEquivalence}, $C^*_\red(\cB)$ is Morita equivalent to $C^*_\red(\cC)=\bk(\cB)\rtimes_{\red \beta}G$.
    Hence, $\bk(\cB)\rtimes_{\red \beta}G$ is nuclear and \cite[Theorem 3.4]{LauPat1991} implies that $\bk(\cB)$ is nuclear (unfortunately, in that article nuclear \Cstar-algebras are called \textit{amenable}).
    Corollary 5.2 of \cite{Ab03} implies $B_e$ is nuclear.
    Finally, if $G$ is inner amenable, then \cite[Corollary 6.6]{mckee2020amenable} implies $\beta$ is \Cstar-amenable.
\end{proof}

\subsection{The case of SIN groups}\label{ssec: sin groups}
For the specific case of \Cstar-actions and SIN groups, our tensorial machinery allows for a fully self-contained proof of the converse implication in Theorem~\ref{thm: reduced algebra nuclear implies nuclear fiber}, avoiding the heavy machinery of \cite{mckee2020amenable}. 

A group $G$ has small invariant neighborhoods (SIN) if there exists a basis $\mathcal{U}$ of compact neighborhoods of $e \in G$, each of which is invariant under conjugation. Every discrete group is SIN, and every SIN group is inner amenable and unimodular. We use $\lambda$ and $\rho\colon G \to \bB(\ltwo{G})$ for the left and right regular representations, respectively, and define $\omega\colon G \to \bB(\ltwo{G})$ by setting $\omega_t := \lambda_t \rho_t$. Let $\alpha$ and $\beta$ be \Cstar-actions of $G$ on $A$ and $B$, respectively.

The natural maps $\iota^{\red\alpha}\colon A \to \M(A \rtimes_{\red \alpha} G)$ and $u^{\red\alpha}\colon G \to \M(A \rtimes_{\red \alpha} G)$, alongside their full counterparts $(\iota^{\alpha},u^{\alpha})$, form canonical covariant pairs. For all $a \in A$, $s \in G$, and $f \in C_c(G,A)$, we have $\iota^{\red\alpha}(a)f = \iota^{\alpha}(a)f = af$, and $u^{\red\alpha}_s f = u^{\alpha}_s f = \tilde{\alpha}_s(f)$.

\begin{proposition}\label{prop: faithful rep of reduced crossed product}
The covariant pair $(\iota^{\red\alpha}\otimes \iota^{\red\beta}, u^{\red\alpha}\otimes^d u^{\red\beta})$ of $\alpha \otimes^d \beta$, defined by
\begin{align*}
    \iota^{\red\alpha}\otimes \iota^{\red\beta}\colon A\otimes B &\longrightarrow \M\big((A\rtimes_{\red \alpha} G)\otimes (B\rtimes_{\red \beta} G)\big), 
    & (a\otimes b)&\longmapsto \iota^{\red\alpha}(a)\otimes \iota^{\red\beta}(b),  \\
    u^{\red\alpha}\otimes^d u^{\red\beta}\colon G &\longrightarrow \M\big((A\rtimes_{\red \alpha} G)\otimes (B\rtimes_{\red \beta} G)\big), 
    & t&\longmapsto u^{\red\alpha}_t \otimes u^{\red\beta}_t,
\end{align*}
integrates to a representation that factors through a faithful representation of $(A\otimes B)\rtimes_{\red\,(\alpha\otimes^d\beta)} G$.
\end{proposition}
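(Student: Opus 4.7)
The plan is to realise the covariant representation concretely on a product of reduced Hilbert spaces and then apply a Fell-absorption / Kac--Takesaki-type unitary to identify its integrated form, up to a trivial amplification, with the reduced regular representation of $\alpha\otimes^d\beta$. I would start by fixing faithful nondegenerate covariant representations $(X_A,\pi_A,U^A)$ of $\alpha$ and $(X_B,\pi_B,U^B)$ of $\beta$, so that (by the facts recalled in the preliminaries) the reduced covariant representations $1\pi_A\rtimes\lambda U^A$ on $\ltwo{G}\otimes X_A$ and $1\pi_B\rtimes\lambda U^B$ on $\ltwo{G}\otimes X_B$ factor through faithful representations of $A\rtimes_{\red\alpha}G$ and $B\rtimes_{\red\beta}G$ respectively. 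Under these identifications $\iota^{\red\alpha}(a)$ and $u^{\red\alpha}_t$ correspond to $1\otimes\pi_A(a)$ and $\lambda_t\otimes U^A_t$, and analogously for $B$; hence on $\ltwo{G}\otimes X_A\otimes\ltwo{G}\otimes X_B$ the covariant pair of the proposition reads $1\otimes\pi_A(a)\otimes 1\otimes\pi_B(b)$ and $\lambda_t\otimes U^A_t\otimes\lambda_t\otimes U^B_t$, with covariance over $\alpha\otimes^d\beta$ immediate from the covariance of each factor.

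After reshuffling tensor factors to $\ltwo{G}\otimes\ltwo{G}\otimes X_A\otimes X_B$ the operators become $1\otimes 1\otimes(\pi_A\otimes\pi_B)(a\otimes b)$ and $\lambda_t\otimes\lambda_t\otimes U^A_t\otimes U^B_t$. I would then introduce the Kac--Takesaki-type unitary $V$ on $\ltwo{G\times G}$ defined by $(V\xi)(s,t):=\xi(s,st)$ and verify directly that $V(\lambda_a\otimes\lambda_a)V^{*}=\lambda_a\otimes 1$ for every $a\in G$, while $V$ commutes with $1\otimes 1$. Conjugating the covariant pair by $V\otimes 1_{X_A\otimes X_B}$ transforms it into $1\otimes 1\otimes(\pi_A\otimes\pi_B)(a\otimes b)$ and $\lambda_t\otimes 1\otimes U^A_t\otimes U^B_t$, thereby implementing a form of Fell's absorption trick at the level of the diagonal unitary representation $u^{\red\alpha}\otimes^d u^{\red\beta}$.

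A final permutation of tensor factors identifies this representation with the amplification $\bigl(1(\pi_A\otimes\pi_B)\rtimes\lambda(U^A\otimes U^B)\bigr)\otimes 1_{\ltwo{G}}$ acting on $(\ltwo{G}\otimes X_A\otimes X_B)\otimes\ltwo{G}$. Since $\pi_A\otimes\pi_B$ is faithful on $A\otimes B$ (this being the defining property of the minimal tensor product) and $(X_A\otimes X_B,\pi_A\otimes\pi_B,U^A\otimes U^B)$ is a faithful nondegenerate covariant representation of $\alpha\otimes^d\beta$, the reduced regular covariant representation $1(\pi_A\otimes\pi_B)\rtimes\lambda(U^A\otimes U^B)$ factors through a faithful representation of $(A\otimes B)\rtimes_{\red\,(\alpha\otimes^d\beta)}G$; amplifying by $1_{\ltwo{G}}$ preserves faithfulness, which yields the proposition. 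I expect the main obstacle to be the careful bookkeeping of tensor factors and the direct verification of the Kac--Takesaki identity $V(\lambda_a\otimes\lambda_a)V^{*}=\lambda_a\otimes 1$; everything else then reduces to the standard facts about reduced covariant representations recalled at the end of Subsection~\ref{ssec:preliminaries}.
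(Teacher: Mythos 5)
Your argument is correct, but it spends more machinery than the paper does at the decisive step. Both proofs begin identically: fix unitary implementations and realize the covariant pair on $\ltwo{G}\otimes X_A\otimes\ltwo{G}\otimes X_B$ as $a\otimes b\mapsto 1\otimes\pi_A(a)\otimes 1\otimes\pi_B(b)$ and $t\mapsto\lambda_t\otimes U^A_t\otimes\lambda_t\otimes U^B_t$. At that point the paper simply regroups the factors as $\ltwo{G}\otimes(X_A\otimes\ltwo{G}\otimes X_B)$ and observes that the pair is precisely the reduced covariant representation $(1\pi,\lambda W)$ attached to the covariant representation $\pi:=\pi_A\otimes 1\otimes\pi_B$, $W:=U^A\otimes\lambda\otimes U^B$ of $\alpha\otimes^d\beta$; since the fact recalled in Subsection~\ref{ssec:preliminaries} (that $1\pi\rtimes_\red\lambda U$ is faithful if and only if $\pi$ is faithful) applies to an \emph{arbitrary} covariant representation, faithfulness of $\pi_A\otimes 1\otimes\pi_B$ finishes the proof in one line. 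You instead insist on reducing to the regular representation built from the specific covariant representation $(\pi_A\otimes\pi_B,\,U^A\otimes U^B)$, which forces you to absorb the second copy of $\lambda$ via the Kac--Takesaki unitary and then to recognize an amplification. All of your steps check out: with $(V\xi)(s,t)=\xi(s,st)$ one indeed gets $V(\lambda_a\otimes\lambda_a)V^{*}=\lambda_a\otimes 1$, the tensor-factor permutations are as you describe, and amplification by $1_{\ltwo{G}}$ preserves faithfulness. So your proof is complete; it just deploys a Fell-absorption argument where the paper needs none. What your route buys in exchange is an explicit unitary equivalence of the representation in question with an amplification of the standard regular representation of $\alpha\otimes^d\beta$, which is slightly more information than the proposition asks for.
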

\begin{proof}
    Take unitary implementations $(X,\pi_A,U)$ and $(Y,\pi_B,V)$ of $\alpha$ and $\beta$, respectively. Then $\iota^{\red\alpha}$ can be viewed as 
    \[
    A \to \bB(\ltwo{G}\otimes X \otimes \ltwo{G}\otimes Y), 
    \qquad a \mapsto 1 \otimes \pi_A(a) \otimes 1 \otimes 1,
    \]
    with $u^{\red\alpha}_r = \lambda_r \otimes U_r \otimes 1 \otimes 1$.  
    Similarly, $\iota^{\red\beta}(b) = 1 \otimes 1 \otimes 1 \otimes \pi_B(b)$ and $u^{\red\beta}_r = 1 \otimes 1 \otimes \lambda_r \otimes V_r$.
    
    With this notation, we obtain
    \[
    \iota^{\red\alpha}\otimes \iota^{\red\beta}(a\otimes b) 
    = 1 \otimes \pi_A(a) \otimes 1 \otimes \pi_B(b),
    \qquad 
    (u^{\red\alpha}\otimes^d u^{\red\beta})_t 
    = \lambda_t \otimes U_t \otimes \lambda_t \otimes V_t.
    \]
    
    In the notation of Section~\ref{ssec:preliminaries}, this means
    \[
    (\iota^{\red\alpha}\otimes \iota^{\red\beta},\, u^{\red\alpha}\otimes^d u^{\red\beta}) 
    = (1\otimes (\pi_A \otimes 1 \otimes \pi_B),\, \lambda \otimes (U \otimes \lambda \otimes V)).
    \]
    The conclusion follows immediately from the fact that $\pi_A \otimes 1 \otimes \pi_B$ is a faithful representation of $A\otimes B$.
\end{proof}

\begin{proposition}\label{prop: faithful rep of full crossed product}
If $G$ is SIN, then the covariant pair $(\iota^{\red\alpha}\otmax \iota^{\red\beta}, u^{\red\alpha}\otmax^d u^{\red\beta})$ of $\alpha \otmax^d \beta$, given by
\begin{align*}
    \iota^{\red\alpha}\otmax \iota^{\red\beta}\colon A\otmax B 
        &\longrightarrow \M\big((A\rtimes_{\red \alpha} G)\otmax (B\rtimes_{\red \beta} G)\big), 
        & (a\otimes b) &\longmapsto \iota^{\red\alpha}(a)\otimes \iota^{\red\beta}(b),   \\
    u^{\red\alpha}\otmax^d u^{\red\beta}\colon G 
        &\longrightarrow \M\big((A\rtimes_{\red \alpha} G)\otmax (B\rtimes_{\red \beta} G)\big),  
        & t &\longmapsto u^{\red\alpha}_t \otimes u^{\red\beta}_t,
\end{align*}
integrates to a faithful representation of $(A\otmax B)\rtimes_{\alpha\otmax^d\beta} G$.
\end{proposition}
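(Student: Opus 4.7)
The plan is to construct a completely positive left inverse to the integrated representation $\phi := (\iota^{\red\alpha}\otmax\iota^{\red\beta})\rtimes(u^{\red\alpha}\otmax^d u^{\red\beta})$, modeled on the multiplicative-domain argument of Lemma~\ref{lemma: the key lemma}, but now with the SIN hypothesis supplying the required equivariant structure in place of the \wcp\ assumption used there. First I would fix a faithful covariant representation $(H,\sigma,W)$ of $\alpha\otmax^d\beta$ whose integrated form $\sigma\rtimes W$ is faithful on the full crossed product $(A\otmax B)\rtimes_{\alpha\otmax^d\beta} G$, and decompose $\sigma = \sigma_A\cdot\sigma_B$ as a product of commuting covariant subrepresentations of $A$ and $B$ on $H$, both covariant with $W$.

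The key input coming from the SIN hypothesis is a net of $\omega$-invariant unit vectors in $\ltwo{G}$, where $\omega_t:=\lambda_t\rho_t$: choosing a basis $\{U_i\}$ of compact conjugation-invariant neighborhoods of $e$ and setting $\xi_i:=\indfun_{U_i}/\sqrt{\abs{U_i}}$, unimodularity (which SIN implies) together with conjugation-invariance of $U_i$ give $\omega_t\xi_i=\xi_i$ for every $t\in G$. I would then use the $\xi_i$ to construct, along an ultrafilter and combined with the equivariant tensor-product technique of Appendix~\ref{sec: appendix tensor product of cp equivariant maps}, a $G$-equivariant ccp map
\[
\Psi : (A\rtimes_{\red\alpha}G)\otmax(B\rtimes_{\red\beta}G)\longrightarrow\bB(H)
\]
that sends the "diagonal" elementary elements $(\iota^{\red\alpha}(a)\,u^{\red\alpha}_t)\otmax(\iota^{\red\beta}(b)\,u^{\red\beta}_t)$ to $\sigma_A(a)\sigma_B(b)W_t$. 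The $\omega$-invariance of the $\xi_i$ is precisely what forces the slicing to collapse the pair $u^{\red\alpha}_t\otmax u^{\red\beta}_t$ to the single unitary $W_t$, rather than leaving two decoupled copies; without SIN one has no such fixed vectors, and the two reduced group actions cannot be identified along a diagonal in this way.

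Finally, as in Lemma~\ref{lemma: the key lemma}, a multiplicative-domain argument should show that the image of $\phi$ lies in the multiplicative domain of $\Psi$, so that the restriction of $\Psi$ to $\phi((A\otmax B)\rtimes G)$ is a $^*$-homomorphism. The composite $\Psi\circ\phi$ is then the integrated form of the faithful covariant representation $(\sigma,W)$ of $\alpha\otmax^d\beta$, hence faithful on the full crossed product; consequently $\phi$ itself must be injective. I expect the main obstacle to be the construction of $\Psi$ with the correct equivariance and the verification that the diagonal image of $\phi$ sits inside its multiplicative domain: the naive slicing by $\xi_i\otimes\xi_i$ lives in a spatial realization that only computes the min-tensor-product norm on $A\rtimes_{\red\alpha}G$ and $B\rtimes_{\red\beta}G$, and genuine care is needed to upgrade it to a well-defined map on the max tensor product via the ucp-lifting machinery of Appendix~\ref{sec: appendix tensor product of cp equivariant maps}.
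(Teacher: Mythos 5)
Your overall strategy---recover the universal covariant representation of $\alpha\otmax^d\beta$ from the diagonal pair by exploiting the $\omega$-invariant vectors $\xi_i$ that the SIN hypothesis provides---points in the right direction, and your identification of where SIN enters (collapsing $u^{\red\alpha}_t\otimes u^{\red\beta}_t$ to a single unitary via $\omega_t\xi_i=\xi_i$) matches the paper. But there is a genuine gap exactly at the point you yourself flag as ``the main obstacle'': you never produce any map, ccp or otherwise, that is actually defined on the \emph{maximal} tensor product $(A\rtimes_{\red\alpha}G)\otmax(B\rtimes_{\red\beta}G)$. Slicing by the vectors $\xi_i$ only makes sense once both reduced crossed products act compatibly on a common Hilbert space, and a map built fibrewise from two commuting ccp maps does not automatically extend to the max tensor product. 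The machinery of Appendix~\ref{sec: appendix tensor product of cp equivariant maps} that you invoke concerns von Neumann tensor products of \Wstar-algebras and offers no route to a map out of a maximal \Cstar-tensor product, so the construction of $\Psi$ is not merely delicate but unsupported as stated.

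The paper's resolution of precisely this point is the step your proposal is missing. Starting from a faithful covariant representation $(X,\pi,U)$ of $\alpha\otmax^d\beta$ with commuting parts $\pi_A,\pi_B$, one represents $A\rtimes_{\red\alpha}G$ on $\ltwo{G}\otimes X$ by the regular pair $(1\pi_A,\lambda\otimes U)$ and represents $B\rtimes_{\red\beta}G$ on the \emph{same} space by conjugating its regular pair by the unitary $(W\xi)(r)=\Delta(r)^{-1/2}U_r\xi(r^{-1})$, which turns $\lambda\otimes U$ into $\rho\otimes 1$. These two representations commute, so they integrate to a genuine \Star{}homomorphism $\kappa$ on the max tensor product---and no SIN hypothesis is needed for this step. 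Only afterwards do the invariant vectors $\xi_u$ enter: the paper uses them for a direct norm estimate $\|\pi\rtimes U(f)\|\leq\|\kappa(\phi(f))\|$ via matrix coefficients. (Your compression-plus-multiplicative-domain variant would also work once $\kappa$ exists, since $\omega_r\xi_u=\xi_u$ makes the compression of $V_r=\omega_r\otimes U_r$ exactly the unitary $U_r$; but without the commuting pair on $\ltwo{G}\otimes X$ there is nothing to compress, and the argument does not get off the ground.)
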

\begin{proof}
    Let $(X,\pi,U)$ be a covariant representation of $\alpha\otmax^d\beta$ such that the integrated form $\pi\rtimes U$ is faithful and nondegenerate.  
    Decompose $\pi$ into representations $\pi_A\colon A\to \bB(X)$ and $\pi_B\colon B\to \bB(X)$, so that $\pi(a\otimes b)=\pi_A(a)\pi_B(b)$.  
    Then $(X,\pi_A,U)$ and $(X,\pi_B,U)$ are covariant representations.  

    Using the notation of Section~\ref{ssec:preliminaries}, we construct the covariant representations 
    \[
       (\ltwo{G}\otimes X,\,1\pi_A,\,\lambda U)
       \quad\text{and}\quad
       (\ltwo{G}\otimes X,\,1\pi_B,\,\lambda U),
    \]
    which induce faithful representations
    \begin{align*}
        1\pi_A\rtimes_\red \lambda U &\colon A\rtimes_{\red\alpha}G \to \bB(\ltwo{G}\otimes X), 
        & 1\pi_B\rtimes_\red \lambda U &\colon B\rtimes_{\red\beta}G \to \bB(\ltwo{G}\otimes X).
    \end{align*}

    We conjugate the second by the unitary $W\in \bB(\ltwo{G}\otimes X)$ defined by 
    \[
       (W\xi)(r) = \Delta(r)^{-1/2} U_r\xi(r^{-1}).
    \]
    Note that $W=W^*=W^{-1}$.  
    Define $\overline{1\pi_B}(b):=W(1\pi_B)(b)W$ and $\overline{\lambda U}_r:=W(\lambda U)_r W$.  
    Then 
    \[
       \overline{1\pi_B}(b)\xi(r)=\pi_B(\beta_r(b))\xi(r),
       \qquad 
       \overline{\lambda U}_r=\rho_r\otimes 1.
    \]

    Straightforward computations show that for all $r,s\in G$, $a\in A$, and $b\in B$,
    \begin{align*}
         (\lambda U_r)\,\overline{\lambda U}_s &= \overline{\lambda U}_s\,(\lambda U_r), 
         & (\lambda U_r)\,\overline{1\pi_B}(b) &= \overline{1\pi_B}(b)\,(\lambda U_r),  \\
         (1\pi_A)(a)\,\overline{1\pi_B}(b) &= \overline{1\pi_B}(b)\,(1\pi_A)(a), 
         & (1\pi_A)(a)\,\overline{\lambda U}_s &= \overline{\lambda U}_s\,(1\pi_A)(a).
    \end{align*}

    Hence, the images of $1\pi_A\rtimes_\red \lambda U$ and 
    $\overline{1\pi_B}\rtimes_\red \overline{\lambda U}:=\Ad(W)\circ (1\pi_B\rtimes_\red \lambda U)$ commute, and we obtain a representation 
    \[
       \kappa := (1\pi_A\rtimes_\red \lambda U)\otmax (\overline{1\pi_B}\rtimes_\red \overline{\lambda U})
       \colon (A\rtimes_{\red\alpha}G)\otmax (B\rtimes_{\red \beta} G) \to \bB(\ltwo{G}\otimes X).
    \]

    Extending $\kappa$ to the multiplier algebra and composing with $\iota^{\red\alpha}\otmax \iota^{\red\beta}$ and $u^{\red\alpha}\otmax^d u^{\red\beta}$, we obtain the covariant representation $(\ltwo{G}\otimes X, \Pi, V)$ with $V_r = \omega_r\otimes U_r$.  

    Let $\mathcal{U}$ be a basis of compact invariant neighborhoods of $e$.  
    For each $u\in \mathcal{U}$ let $\indfun_u\in C_c(G)$ be the characteristic function of $u$, $m_u$ its measure, and set $\xi_u:=(m_u)^{-1/2}\indfun_u$, so that $\|\xi_u\|_2=1$.  

    For all $a\in A$, $b\in B$, $x,y\in X$, and $r\in G$ we have
    \begin{align*}
        \langle \Pi(a\otimes b)V_r (\xi_u\otimes x),\xi_u\otimes y\rangle 
        &= \int_G \langle \pi_A(a)\pi_B(\beta_s(b))U_rx,y\rangle \, \overline{\omega_r\xi_u}(s)\,\xi_u(s)\, ds\\
        &= (m_u)^{-1}\int_u \langle \pi_A(a)\pi_B(\beta_s(b))U_rx,y\rangle \, ds   \\
        &= (m_u)^{-1}\int_u \langle \pi\circ (\id_A\otimes \beta_s)(a\otimes b)\,U_rx,y\rangle \, ds .
    \end{align*}

    Now let $f\in C_c(G,A\otmax B)$.  
    Since the image of $f$ is compact,
    \[
       \lim_u \sup\{\| (\id_A\otimes \beta_s)(f(t))-f(t) \|\colon s\in u,\ t\in G \}=0.
    \]
    Writing $m_f$ for the measure of $\supp(f)$, we obtain
    \begin{multline*}
        \lim_u \Big| \langle \Pi\rtimes V (f) (\xi_u\otimes x),\xi_u\otimes y\rangle - \langle \pi\rtimes U (f)x,y\rangle \Big|\\
        \leq \lim_u \int_G (m_u)^{-1}\int_u \big| \langle \pi\circ (\id_A\otimes \beta_s)(f(r))U_rx,y\rangle - \langle \pi(f(r))U_rx,y\rangle\big|\,ds\,dr\\
        \leq \lim_u m_f \|x\|\|y\|\sup\{\| (\id_A\otimes \beta_s)(f(t))-f(t) \|\colon s\in u,\ t\in G \}  = 0.
    \end{multline*}

    Since $\|\xi_u\otimes x\|=\|x\|$, it follows immediately that 
    $\| \pi\rtimes U (f) \|\leq \| \Pi\rtimes V (f)\|$.  
    By continuity, the same inequality holds for all $f\in (A\otmax B)\rtimes_{\alpha\otmax^d\beta} G$, and hence 
    \[
       \Pi\rtimes V =\kappa\circ \big((\iota^{\red\alpha}\otmax \iota^{\red\beta})\rtimes (u^{\red\alpha}\otmax^d u^{\red\beta})\big)
    \]
    is a faithful representation.
\end{proof}

\begin{proposition}\label{prop: SIN groups and nuclearity}
    If $A$ is nuclear and $\alpha$ is \Cstar-amenable, then $A\rtimes_\red G$ is nuclear.
    The converse holds if $G$ is SIN.
\end{proposition}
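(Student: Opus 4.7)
Both implications follow from Theorem~\ref{thm: equivalence of amenability for cstar actions} combined with Propositions~\ref{prop: faithful rep of reduced crossed product} and~\ref{prop: faithful rep of full crossed product}, together with the standard identifications of crossed products with trivial tensor factors.

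For the forward implication I plan to feed a trivial test action into the \wcp\ characterization. If $A$ is nuclear and $\alpha$ is \Cstar-amenable, then Theorem~\ref{thm: equivalence of amenability for cstar actions} gives that $\alpha\otmax^d \id_C$ has the \wcp\ for every \Cstar-algebra $C$ equipped with the trivial $G$-action. Using the standard identifications $(A\otmax C)\rtimes G \cong (A\rtimes G)\otmax C$ and $(A\otimes C)\rtimes_\red G \cong (A\rtimes_\red G)\otimes C$ (valid for trivial actions on $C$), together with $A\otmax C = A\otimes C$ (nuclearity of $A$) and $A\rtimes G = A\rtimes_\red G$ (amenability of $\alpha$), one deduces $(A\rtimes_\red G)\otmax C = (A\rtimes_\red G)\otimes C$ for every $C$, i.e., nuclearity of $A\rtimes_\red G$.

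For the converse under SIN, my plan is to first derive amenability of $\alpha$ via Theorem~\ref{thm: equivalence of amenability for cstar actions} and then recover nuclearity of $A$. Fix any \Cstar-action $\beta$ of $G$ on $B$. Propositions~\ref{prop: faithful rep of reduced crossed product} and~\ref{prop: faithful rep of full crossed product} provide the faithful embeddings
\[
(A\otimes B)\rtimes_{\red,\alpha\otimes^d\beta} G \hookrightarrow (A\rtimes_\red G)\otimes (B\rtimes_\red G),
\qquad
(A\otmax B)\rtimes_{\alpha\otmax^d\beta} G \hookrightarrow (A\rtimes_\red G)\otmax (B\rtimes_\red G),
\]
the latter requiring the SIN hypothesis; both are defined on generators by $a\otimes b\mapsto \iota^{\red\alpha}(a)\otimes \iota^{\red\beta}(b)$ and $t\mapsto u^{\red\alpha}_t\otimes u^{\red\beta}_t$. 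Nuclearity of $A\rtimes_\red G$ identifies the two targets, so the SIN embedding factors as
\[
(A\otmax B)\rtimes G \twoheadrightarrow (A\otmax B)\rtimes_\red G \twoheadrightarrow (A\otimes B)\rtimes_\red G \hookrightarrow (A\rtimes_\red G)\otimes (B\rtimes_\red G).
\]
Injectivity of this composition forces both canonical surjections to be isomorphisms; in particular $\alpha\otmax^d\beta$ has the \wcp\ for every $\beta$, and Theorem~\ref{thm: equivalence of amenability for cstar actions} yields amenability of $\alpha$. For nuclearity of $A$, I would invoke the faithful conditional expectation $A\rtimes_\red G\to A$ available in the SIN setting (constructed by averaging $f\mapsto f(e)$ over an invariant compact neighborhood basis), which presents $A$ as a ccp retract of the nuclear algebra $A\rtimes_\red G$.

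The main technical hurdle will be verifying that the diagram displayed in the converse actually commutes: the Proposition~\ref{prop: faithful rep of full crossed product} embedding must coincide, under the nuclearity identification $(A\rtimes_\red G)\otmax (B\rtimes_\red G) = (A\rtimes_\red G)\otimes (B\rtimes_\red G)$, with the composition through the Proposition~\ref{prop: faithful rep of reduced crossed product} embedding. This reduces to matching the two embeddings on the generators $a\otimes b$ and $t\in G$, which is immediate from the formulas in those propositions; the agreement then extends to the whole crossed product by continuity and universality.
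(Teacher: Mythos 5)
Your forward implication and the amenability half of the converse coincide in substance with the paper's proof: the forward direction is the same chain of identifications with a trivial test action, and your factorization diagram is exactly the paper's observation that, once nuclearity of $A\rtimes_\red G$ identifies $(A\rtimes_\red G)\otmax(B\rtimes_\red G)$ with $(A\rtimes_\red G)\otimes(B\rtimes_\red G)$, the covariant pairs of Propositions~\ref{prop: faithful rep of full crossed product} and~\ref{prop: faithful rep of reduced crossed product} agree, so $\alpha\otmax^d\beta$ has the \wcp\ for every $\beta$ and Theorem~\ref{thm: equivalence of amenability for cstar actions} applies. The generator-matching you flag as the main technical hurdle is indeed immediate and is all the paper does at that point.

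The genuine gap is in your derivation of nuclearity of $A$. For a non-discrete SIN group (e.g.\ $G=\T$ or $G=\R$) there is no conditional expectation $A\rtimes_\red G\to A$: the algebra $A$ is not a \Cstar-subalgebra of $A\rtimes_\red G$ (only of its multiplier algebra), and the map $f\mapsto f(e)$ on $C_c(G,A)$ is unbounded for the reduced \Cstar-norm --- already for $A=\C$ and $G=\T$ one has $C^*_\red(\T)\cong c_0(\Z)$ and $f(e)=\sum_n\hat f(n)$. Averaging over invariant neighborhoods does not repair this: the maps $f\mapsto m_u^{-1}\int_u f(t)\,dt$ converge to $f\mapsto f(e)$ only pointwise on $C_c(G,A)$, and no bounded limit exists. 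So $A$ is not exhibited as a ccp retract of $A\rtimes_\red G$ in this way. The paper instead takes $\beta$ to be the trivial action and observes that $\iota^{\red\alpha}\otmax\iota^{\red\beta}=(\iota^{\red\alpha}\otimes\iota^{\red\beta})\circ\kappa$ is faithful on $A\otmax B$ (being part of the faithful representation of Proposition~\ref{prop: faithful rep of full crossed product}), whence the canonical quotient $\kappa\colon A\otmax B\to A\otimes B$ is injective for every $B$, i.e.\ $A$ is nuclear. In fact your own factorization already yields this without any new ingredient: injectivity of the composite forces the middle surjection $(A\otmax B)\rtimes_\red G\onto(A\otimes B)\rtimes_\red G$ to be an isomorphism, and compatibility with the canonical embeddings of the coefficient algebras into the multiplier algebras of the reduced crossed products then gives injectivity of $\kappa$; so the conditional-expectation step should simply be deleted and replaced by this observation.
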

\begin{proof}
   The forward implication is already known \cite[Theorem 7.2]{BssEff_amenability}, but we provide a proof here for the convenience of the reader.  
   Take any \Cstar-algebra $B$ and equip it with the trivial action $\beta$ of $G$.
   Then $\alpha\otmax^d \beta = \alpha\otimes^d \beta$ (in particular, $\alpha$) has the \wcp, and
   \begin{align*}
       (A\rtimes_{\red\alpha} G)\otmax B 
       &= (A\rtimes_\alpha G)\otmax B
        = (A\otmax B)\rtimes_{\alpha\otmax^d\beta} G
        = (A\otimes B)\rtimes_{\alpha\otimes^d\beta} G\\
       &= (A\otimes B)\rtimes_{\red \alpha\otimes^d\beta} G 
        = (A\rtimes_{\red\alpha}G)\otimes B,
   \end{align*}
   showing that $A\rtimes_{\red\alpha}G$ is nuclear.

   For the converse, let $B$ be any \Cstar-algebra equipped with the trivial action $\beta$ of $G$.
   If $\kappa\colon A\otmax B\to A\otimes B$ denotes the natural quotient map, then 
   \[
      \iota^{\red\alpha}\otmax \iota^{\red\beta} 
      = (\iota^{\red\alpha}\otimes \iota^{\red\beta})\circ \kappa
   \]
   is faithful.  
   Hence $\kappa$ is faithful, and it follows that $A$ is nuclear.

   Now let $\beta$ be an arbitrary \Cstar-action of $G$ on a \Cstar-algebra $B$. 
   By Propositions~\ref{prop: faithful rep of reduced crossed product} and~\ref{prop: faithful rep of full crossed product} we have
   \[
      (\iota^{\red\alpha}\otimes \iota^{\red\beta},u^{\red\alpha}\otimes^d u^{\red\beta})
      = (\iota^{\red\alpha}\otmax \iota^{\red\beta},u^{\red\alpha}\otmax^d u^{\red\beta}).
   \]
   Therefore $\alpha\otmax^d\beta$ has the \wcp, and Theorem~\ref{thm: equivalence of amenability for cstar actions} implies that $\alpha$ is amenable.   
\end{proof}

\section{Applications I: The B\'edos--Conti Approximation Property and Permanence}\label{sec: the BCAP}

A new approximation property for Fell bundles over discrete groups was recently introduced by B\'edos and Conti in \cite{bedos2024positiveMZ}. In this section, we apply the tensorial machinery developed in Section~\ref{sec: amenability of Fell bundles} to extend this property (BCAP) to Fell bundles over general locally compact groups. Most importantly, we prove that the BCAP is equivalent to \Cstar-amenability (and thus to Exel--Ng's AP) without requiring any nuclearity hypotheses on the unit fiber, see \cite{bedos2024positiveMZ}.

Furthermore, we leverage this equivalence to establish highly non-trivial permanence properties for \Cstar-amenability, such as the passage to closed subgroups, invariant subalgebras, and conditional expectations. These permanence results demonstrate the practical power of the BCAP as a tool to detect amenability in settings where classical spatial arguments fail.

Our approach to completely positive (cp) maps on Fell bundles follows the definitions and dilation theorems of \cite{BssFrrSeh}, where the setting is that of Fell bundles over discrete groups.  
Throughout, let $\cB=\{B_t\}_{t\in G}$ be a Fell bundle over a locally compact group $G$.  

Recall from \cite[Lemma~2.8]{AbFrrEquivalence} that for each $n\in \bN$ and $t=(t_1,\ldots,t_n)\in G^n$, the set
\[
   \bM_t(\cB)
   := \bigl\{ (b_{i,j})_{i,j=1}^n : b_{i,j}\in B_{t_i^{-1}t_j} \text{ for all } i,j \bigr\}
\]
is a \Cstar-algebra under the usual matrix operations.  
Given a *-representation $T\colon \cB\to \bB(X)$, one obtains a natural *-representation
\[
   T^t \colon \bM_t(\cB)\longrightarrow \bM(\bB(X))=\bB(X^n), 
   \qquad (b_{i,j})\longmapsto (T_{b_{i,j}})_{i,j=1}^n.
\]
Moreover, if $T|_{B_e}$ is faithful, then by the \Cstar-identity each restriction $T|_{B_t}$ is isometric, and hence $T^t$ is faithful.

The following definition extends \cite[Definition~A.2]{BssFrrSeh} to Fell bundles over locally compact groups.

\begin{definition}
    A map $\phi\colon \cB\to \bB(X)$ is said to be \emph{completely positive} (cp) if:
    \begin{enumerate}
        \item $X$ is a Hilbert space and the restriction $\phi|_{B_t}$ is linear for each fiber $B_t$.
        \item For every $n\in \bN$ and $t=(t_1,\dots,t_n)\in G^n$, the map
        \[
            \phi^t\colon \bM_t(\cB)\to \bM_n(\bB(X)), 
            \qquad (b_{i,j})\mapsto (\phi(b_{i,j})),
        \]
        is positive.
        \item For all $x,y\in X$, the function $b\mapsto \langle x,\phi(b)y\rangle$ is continuous on $\cB$.
    \end{enumerate}
    The \emph{support} of $\phi$ is
    \[
       \supp(\phi)\;:=\;\overline{\{t\in G : \phi(B_t)\neq \{0\}\}}.
    \]
    If $\|\phi|_{B_e}\|\leq 1$, then $\phi$ is called \emph{ccp} (contractive completely positive).
\end{definition}

\begin{remark}\label{rmk: basic example of cp maps}
    A basic way to construct a cp map on $\cB$ is as follows.  
    Take a *-representation $T\colon \cB\to \bB(Y)$, a Hilbert space $X$, and an operator $V\in \bB(X,Y)$.  
    Then the map $S\colon \cB\to \bB(X)$ defined by
    \[
       S_b := V^*T_bV, \qquad b\in \cB,
    \]
    is completely positive, since for every $n\in \bN$ and $t\in G^n$ we have
    \[
       S^t = (V^n)^*\, T^t \,V^n,
    \]
    where $V^n\colon X^n\to Y^n$ is the diagonal amplification of $V$.  
\end{remark}

Theorem A.4 and Remark A.5 of \cite{BssFrrSeh} give the following.

\begin{theorem}[Stinespring's Dilation Theorem]
    If $\phi\colon \cB\to \bB(X)$ is cp, then there exist a *-representation 
    $T\colon \cB\to \bB(Y)$ and an operator $V\in \bB(X,Y)$ such that 
    \[
       Y=\cspn\,T(\cB)VX
       \qquad\text{and}\qquad
       \phi(b)=V^*T_bV \quad \text{for all } b\in \cB.
    \]
    Moreover, any such $T$ is nondegenerate, and for every approximate unit 
    $\{b_i\}_{i\in I}$ of $B_e$ we have
    \[
       V^*V = \text{\sot-}\lim_i \phi(b_i), 
       \qquad 
       \|V\|^2 = \|\phi|_{B_e}\| = \lim_i \|\phi(b_i)\|.
    \]
\end{theorem}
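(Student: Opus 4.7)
The plan is to adapt the GNS-type Stinespring construction of Theorem~A.4 of \cite{BssFrrSeh} from the discrete to the locally compact setting. The purely algebraic ingredients of that proof go through verbatim; the only genuinely new issue is to ensure that the resulting representation $T$ is a continuous \Star{}representation of the Fell bundle, which is where axiom (3) of our notion of cp map enters.

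First I would form the algebraic direct sum $V_0:=\bigoplus_{s\in G} B_s\otimes X$ (as a vector space) and endow it with the sesquilinear form
\[
\Big\langle \sum_{i} b_i\otimes x_i,\,\sum_j c_j\otimes y_j\Big\rangle := \sum_{i,j}\langle x_i,\phi(b_i^*c_j)y_j\rangle.
\]
Positivity is exactly the assertion that, for $b_i\in B_{t_i}$ and $t=(t_1,\ldots,t_n)$, the element $(b_i^*b_j)_{i,j}\in \bM_t(\cB)$ is positive (it equals $B^*B$ for a suitable row vector $B$), so $\phi^t((b_i^*b_j))\geq 0$. Quotienting by the null space and completing produces a Hilbert space $Y$. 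I would then define $T_b[c\otimes x]:=[bc\otimes x]$; a standard Cauchy--Schwarz estimate based on $\phi^t\geq 0$ gives $\|T_b\xi\|\leq \|b\|\,\|\xi\|$, so $T_b$ extends continuously to $Y$, and multiplicativity and the \Star{}property are purely algebraic.

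To construct $V$, I would fix an approximate unit $\{b_i\}_{i\in I}$ of $B_e$ (bounded by $1$) and set $V_ix:=[b_i\otimes x]$. Since $\langle V_ix,V_jy\rangle=\langle x,\phi(b_ib_j)y\rangle$ and $\phi|_{B_e}$ is a cp map, the scalar net $\langle V_ix,V_jy\rangle$ is Cauchy by standard approximate-unit manipulations, so $\{V_i\}_{i\in I}$ converges strongly to some $V\in \bB(X,Y)$ with $V^*V=\sot\text{-}\lim_i\phi(b_i)$ and $\|V\|^2=\|\phi|_{B_e}\|=\lim_i\|\phi(b_i)\|$. A similar computation yields $V^*T_bV=\phi(b)$. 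For minimality I would observe that $T_bVx=\lim_i[bb_i\otimes x]=[b\otimes x]$, the last equality because $bb_i\to b$ in norm in $\cB$ by the approximate-unit axiom and $\phi$ is norm-continuous on each fiber; hence $T(\cB)VX$ contains all elementary tensors $[b\otimes x]$, whose span is dense in $Y$ by construction. Nondegeneracy of $T$ then follows from the factorization $b=b_0\cdot b$ with $b_0\in B_e$ available in any Fell bundle.

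The main point where the locally compact setting genuinely differs from the discrete one treated in \cite{BssFrrSeh} is the verification that $T$ is a continuous \Star{}representation, i.e.\ that $b\mapsto \langle \xi,T_b\eta\rangle$ is continuous on $\cB$ for all $\xi,\eta\in Y$. For $\xi=[c\otimes x]$ and $\eta=[d\otimes y]$ this is the function $b\mapsto \langle x,\phi(c^*bd)y\rangle$, which is continuous by axiom (3) of cp maps combined with joint continuity of multiplication in $\cB$; the case of general $\xi,\eta$ follows from the uniform bound $\|T_b\|\leq \|b\|$ and a standard density argument. I expect this continuity check to be the only nontrivial new ingredient relative to the discrete case treated in \cite{BssFrrSeh}.
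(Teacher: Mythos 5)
Your proposal is correct and follows essentially the same route as the paper: the paper simply applies Theorem~A.4 of \cite{BssFrrSeh} to the discretized bundle $\cB^d$ (the same fibers over $G$ with the discrete topology) to obtain $T$ and $V$ with all the algebraic properties, and then verifies continuity of $T$ by exactly your argument, namely that $b\mapsto \langle T_cVx,\,T_bT_dVy\rangle=\langle x,\phi(c^*bd)y\rangle$ is continuous together with $Y=\cspn\,T(\cB)VX$ and the uniform bound on $\|T_b\|$. The only difference is that you unfold the GNS-type construction of the discrete case rather than citing it, which is a matter of presentation, not of substance.
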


\begin{proof}
    Let $G^d$ be the group $G$ with the discrete topology, and set 
    $\cB^d := \{B_t\}_{t\in G^d}$.  
    As sets, $\cB$ and $\cB^d$ coincide.  
    By \cite[Theorem~A.4]{BssFrrSeh}, there exist a *-representation 
    $T^d\colon \cB^d\to \bB(Y)$ and an operator $V\in \bB(X,Y)$ such that 
    $Y=\cspn\, T^d(\cB^d)VX$ and 
    $\phi(b)=V^*T^d_b V$ for all $b\in \cB^d$.

    Define $T\colon \cB\to \bB(Y)$ by $T_b:=T^d_b$.  
    To check that $T$ is \wot-continuous, it suffices (since 
    $Y=\cspn\, T^d(\cB^d)VX$) to verify that for all $c,d\in \cB$ and $x,y\in X$,  
    the function 
    \[
       b\ \mapsto\ \langle T_cVx,\,T_bT_dVy\rangle
       \ =\ \langle x,\,\phi(c^*bd)y\rangle
    \]
    is continuous in $b$.  
    This follows directly from the continuity of $\phi$.  
\end{proof}

\begin{corollary}[Integrated form of cp maps]
    If $\phi\colon \cB\to \bB(X)$ is cp, there exists a unique cp map 
    $\tilde{\phi}\colon C^*(\cB)\to \bB(X)$ such that 
    \[
       \langle x,\tilde{\phi}(f)y\rangle 
       = \int_G \langle x,\phi(f(t))y\rangle\,dt,
       \qquad f\in C_c(\cB),\; x,y\in X .
    \]
    Moreover, $\|\tilde{\phi}\|=\|\phi|_{B_e}\|$.
\end{corollary}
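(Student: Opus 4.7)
The plan is to reduce the construction of $\tilde\phi$ to the Stinespring dilation just established and then transport the known integrated form of a \Star{}representation $\cB\to\bB(Y)$ back to a cp map on $C^*(\cB)$. Concretely, I first apply Stinespring's Dilation Theorem to $\phi$, obtaining a nondegenerate \Star{}representation $T\colon\cB\to\bB(Y)$ and an operator $V\in\bB(X,Y)$ with $\phi(b)=V^*T_bV$ for every $b\in\cB$, and with $\|V\|^2=\|\phi|_{B_e}\|$. The universal property of $C^*(\cB)$ then integrates $T$ to a (nondegenerate) \Star{}representation $\tilde T\colon C^*(\cB)\to\bB(Y)$ which, on $f\in C_c(\cB)$, satisfies the weak integral formula $\langle \xi,\tilde T(f)\eta\rangle=\int_G\langle \xi,T_{f(t)}\eta\rangle\,dt$ for all $\xi,\eta\in Y$.

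Next I define $\tilde\phi\colon C^*(\cB)\to\bB(X)$ by $\tilde\phi(a):=V^*\tilde T(a)V$. By Remark~\ref{rmk: basic example of cp maps} (applied to $\tilde T$ and $V$) this map is completely positive. For $f\in C_c(\cB)$ and $x,y\in X$ I then compute
\[
  \langle x,\tilde\phi(f)y\rangle
  =\langle Vx,\tilde T(f)Vy\rangle
  =\int_G\langle Vx,T_{f(t)}Vy\rangle\,dt
  =\int_G\langle x,\phi(f(t))y\rangle\,dt,
\]
which is the required formula.

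For the norm I argue by two inequalities. Since \Star{}representations of \Cstar-algebras are contractive, $\|\tilde\phi(a)\|\leq \|V\|^2\|a\|$, so $\|\tilde\phi\|\leq \|V\|^2=\|\phi|_{B_e}\|$. Conversely, pick an approximate unit $\{e_i\}$ of $C^*(\cB)$; nondegeneracy of $\tilde T$ forces $\tilde T(e_i)\to I_Y$ in the strong operator topology, hence $\tilde\phi(e_i)=V^*\tilde T(e_i)V\to V^*V$ strongly and therefore $\|\tilde\phi\|\geq \|V^*V\|=\|V\|^2$. Uniqueness is immediate: any cp map $C^*(\cB)\to\bB(X)$ is norm continuous, and the weak integral formula determines the matrix coefficients of the image of each $f\in C_c(\cB)$, hence determines the image itself; since $C_c(\cB)$ is dense in $C^*(\cB)$, two such cp extensions must coincide.

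The only mildly delicate points are checking that the integrated form $\tilde T$ really does satisfy the stated weak integral formula for \emph{continuous} \Star{}representations of $\cB$ (as opposed to representations of the discrete-$G$ bundle $\cB^d$ used in the dilation proof), and that the approximate-unit argument for the norm lower bound is available; both are standard consequences of nondegeneracy of $\tilde T$ and of the continuity of $\phi$ inherited through Stinespring, so I expect no serious obstacle.
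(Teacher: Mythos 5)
Your proposal is correct and follows essentially the same route as the paper: dilate $\phi$ via Stinespring to $\phi(b)=V^*T_bV$, integrate $T$ to $\tilde T$, set $\tilde\phi=V^*\tilde T(\cdot)V$, and deduce complete positivity, the weak integral formula, uniqueness by density, and $\|\tilde\phi\|=\|V\|^2=\|\phi|_{B_e}\|$. You merely spell out the norm lower bound (approximate unit plus nondegeneracy) and the uniqueness argument that the paper leaves implicit.
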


\begin{proof}
    By Stinespring’s theorem, $\phi(b)=V^*T_bV$ for a *-representation 
    $T\colon \cB\to \bB(Y)$ and $V\in\bB(X,Y)$. 
    Let $\tilde{T}\colon C^*(\cB)\to \bB(Y)$ be the integrated form of $T$, 
    and set $\tilde{\phi}(f)=V^*\tilde{T}(f)V$. 
    This $\tilde{\phi}$ is cp, satisfies the required identity, is unique by 
    density of $C_c(\cB)$, and  
    $\|\tilde{\phi}\|=\|V\|^2=\|\phi|_{B_e}\|$.
\end{proof}

\begin{remark}\label{rmk: inclusion of kernels suffices}
    Let $A,B,C$ be \Cstar-algebras and $\pi\colon A\to B$ a surjective *-homomorphism. 
    Suppose $\psi\colon A\to C$ is (completely) positive. Then there exists a (completely) positive map $\hat{\psi}\colon B\to C$ with $\hat{\psi}\circ\pi=\psi$ 
    iff $\ker(\pi)\subseteq \ker(\psi)$.
    
    The forward implication is clear. Conversely, if $\ker(\pi)\subseteq \ker(\psi)$, then surjectivity of $\pi$ yields the existence of a unique linear map $\hat{\psi}\colon B\to C$ with $\hat{\psi}\circ\pi=\psi$. For $b\in B^+$, take $a\in A^+$ with $\pi(a)=b$, so $\hat{\psi}(b)=\psi(a)\ge 0$, proving positivity. If $\psi$ is cp, then for all $n$ the map 
    \[
       \hat{\psi}^{(n)}\colon \bM_n(B)\to \bM_n(C),\qquad 
       (b_{i,j})\mapsto(\hat{\psi}(b_{i,j})),
    \]
    is positive since it is $\widehat{\psi^{(n)}}$. Finally, using an approximate unit of $A$, one obtains $\|\hat{\psi}\|=\|\psi\|$.
\end{remark}

\begin{corollary}\label{cor: cp map with compact support}
    If $\phi\colon \cB\to \bB(X)$ is cp with compact support, then its integrated form $\tilde{\phi}\colon C^*(\cB)\to \bB(X)$ factors through a cp map $\tilde{\phi}_\red\colon C^*_\red(\cB)\to \bB(X)$ via the reduced representation $\lambda^\cB\colon C^*(\cB)\to C^*_\red(\cB)$. Moreover, $\|\tilde{\phi}_\red\|=\|\phi|_{B_e}\|$.
\end{corollary}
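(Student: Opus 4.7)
The plan is to verify the inclusion $\ker\lambda^\cB\subseteq\ker\widetilde\phi$, so that Remark~\ref{rmk: inclusion of kernels suffices} will yield the desired cp factorization directly. I will realize $\widetilde\phi$ as a sandwich of the integrated form of a *-representation known to factor through $\lambda^\cB$, letting the compact support of $\phi$ enter through a choice from the Fourier algebra $A(G)$.

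First, apply Stinespring's dilation theorem to write $\phi(b)=V^*T_bV$ for a nondegenerate *-representation $T\colon\cB\to\bB(Y)$ and $V\in\bB(X,Y)$, and form the auxiliary *-representation $T\otimes\lambda\colon\cB\to\bB(Y\otimes L^2(G))$ defined by $(T\otimes\lambda)_b:=T_b\otimes\lambda_t$ for $b\in B_t$ (multiplicativity and involution are routine to check). The first key step will be a Fell-bundle absorption principle, the analog of the classical Fell absorption for group representations, asserting that the integrated form $\widetilde{T\otimes\lambda}$ factors through $\lambda^\cB$; concretely, under the identification $Y\otimes L^2(G)\cong L^2(G,Y)$, $T\otimes\lambda$ can be recognized as the regular representation of $\cB$ with $Y$-coefficients induced from $T|_{B_e}$. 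Next, invoking Eymard's regularity of the Fourier algebra $A(G)$, I will choose $\xi,\eta\in L^2(G)$ such that $h(t):=\langle\xi,\lambda_t\eta\rangle$ equals $1$ on $K:=\supp(\phi)$, define $W_\xi,W_\eta\colon X\to Y\otimes L^2(G)$ by $W_\xi(x)=Vx\otimes\xi$ and $W_\eta(x)=Vx\otimes\eta$, and check the sandwich identity
\[ W_\xi^*(T\otimes\lambda)_b W_\eta = h(t)\,\phi(b), \]
which equals $\phi(b)$ for every $b\in\cB$ since $\phi$ vanishes off $K$ while $h\equiv 1$ on $K$. Integrating in $t$ and extending by continuity will then give $\widetilde\phi(f)=W_\xi^*\widetilde{T\otimes\lambda}(f) W_\eta$ on all of $C^*(\cB)$, and the absorption step will force $\widetilde\phi(f)=0$ whenever $f\in\ker\lambda^\cB$.

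With the factorization $\widetilde\phi=\widetilde\phi_\red\circ\lambda^\cB$ in hand, the norm identity $\|\widetilde\phi_\red\|=\|\phi|_{B_e}\|$ will follow from the previous corollary on integrated forms (which gives $\|\widetilde\phi\|=\|\phi|_{B_e}\|$) together with the fact that $\lambda^\cB$ is a quotient map: lifting elements of $C^*_\red(\cB)$ to $C^*(\cB)$ with arbitrarily small increase of norm yields $\|\widetilde\phi_\red\|=\|\widetilde\phi\|$. The main technical hurdle will be the Fell-type absorption for $\widetilde{T\otimes\lambda}$; while the group case is classical, the Fell-bundle analogue requires care and is the only non-routine ingredient of the argument.
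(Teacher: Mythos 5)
Your proposal is correct and follows essentially the same route as the paper: dilate $\phi$ via Stinespring, sandwich the regular representation $b\in B_t\mapsto \lambda_t\otimes T_b$ (which factors through $C^*_\red(\cB)$) between operators of the form $x\mapsto Vx\otimes\xi$ built from functions on $G$ chosen, using compactness of $\supp(\phi)$, so that the resulting coefficient is identically $1$ there, and then apply the kernel-inclusion remark. The one ingredient you flag as a technical hurdle --- that $\widetilde{T\otimes\lambda}$ factors through $\lambda^\cB$ --- is not a gap: it is exactly the fact the paper quotes from Exel--Ng (their construction of $C^*_\red(\cB)$ via such regular representations), and your use of regularity of $A(G)$ merely replaces the paper's explicit choice of $f,g\in C_c(G)$ with $\int_G g(s)f(t^{-1}s)\,ds=1$ on the support.
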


\begin{proof}
    Let $T\colon \cB\to \bB(Y)$ be a dilation of $\phi$ with $V\in \bB(X,Y)$. 
    Choose a compact neighborhood $N$ of $e$, $f\in C_c(G)$ with $f(t^{-1}s)=1$ for $(t,s)\in \supp(\phi)\times N$, and $g\in C_c(G)^+$ supported in $N$ with $\int_G g=1$. 
    Define $P,Q\colon X\to L^2(G)\otimes Y$ by $Px(t)=f(t)Vx$, $Qx(t)=g(t)Vx$. 

    Recall from \cite{ExNg} that the integrated form of $S\colon \cB\to \bB(L^2(G)\otimes Y)$, $b\mapsto \lambda_t\otimes T_b$, factors through a *-representation $\hat{S}\colon C^*_\red(\cB)\to \bB(L^2(G)\otimes Y)$ with $\hat{S}\circ \lambda^\cB=\tilde{S}$. 
    For $h\in C_c(\cB)$, $x,y\in X$, and $t\in G$, 
    \[
       \int_G g(s)f(t^{-1}s)\,ds\,\langle x,\phi(h(t))y\rangle=\langle x,\phi(h(t))y\rangle,
    \]
    because  $\langle x,\phi(h(t))y\rangle\neq 0$ implies $t\in\supp(\phi)$. 
    Hence
    \[
       \langle x,P^*\hat{S}_{\lambda^\cB(h)}Qy\rangle
       =\int_G\!\int_G g(s)f(t^{-1}s)\langle x,\phi(h(t))y\rangle\,dsdt
       =\langle x,\tilde{\phi}_h y\rangle,
    \]
    and it follows that $P^*\hat{S}_{\lambda^\cB(h)}Q=\tilde{\phi}_h$ for all $h\in C^*(\cB)$; which yields $\ker(\lambda^\cB)\subseteq \ker(\tilde{\phi})$. 
    By Remark~\ref{rmk: inclusion of kernels suffices}, $\tilde{\phi}$ factors through a cp map $\tilde{\phi}_\red$ with $\|\tilde{\phi}_\red\|=\|\phi|_{B_e}\|$.
\end{proof}

\subsection{Completely positive maps between Fell bundles}

Let $\cA=\{A_t\}_{t\in G}$ and $\cB=\{B_t\}_{t\in G}$ be Fell bundles over the same group.

\begin{definition}
    We say that $\phi \colon \cA\to \cB$ is completely positive (cp) if the following conditions hold.
    \begin{enumerate}
        \item $\phi$ is continuous.
        \item\label{item: preserves fibers} For all $t\in G,$ $\phi(A_t)\sbe B_t$ and $\phi|_{A_t}$ is linear.
        \item For all $n\in \bN$ and $t\in G^n$, $\phi^t\colon \bM_t(\cA)\to \bM_t(\cB)$ given by $\phi^t((a_{i,j})_{i,j=1}^n)= (\phi(a_{i,j}))_{i,j=1}^n$,  is positive.
    \end{enumerate}
    The support of $\phi$, $\supp(\phi),$ is the closure of $\{t\in G\colon \phi(A_t)\neq \{0\}\}$.
     If $\|\phi|_{B_e}\|\leq 1$, we say that $\phi$ is ccp (contractive and completely positive).
\end{definition}

\begin{remark}\label{rmk: positivity of maps between bundles}
    For a continuous function $\phi\colon \cA\to \cB$ such that for all $t\in G$, $\phi(A_t)\sbe B_t$ and $\phi|_{A_t}$ is linear, the following are equivalent:
    \begin{enumerate}
        \item $\phi$ is cp.
        \item For every *-representation $T\colon \cB\to \bB(X),$ $T\circ \phi$ is cp.
        \item There exists a *-representation $T\colon \cB\to \bB(X)$ such that $T|_{B_e}$ is faithful and $T\circ\phi$ is cp.
    \end{enumerate}
\end{remark}

\begin{remark}
    If $\phi\colon \cA\to \cB$ is cp, then $\phi|_{A_e}$ is cp and, consequently, it is bounded.
    Combining the previous Remark with the dilation Theorem one gets that for all $a\in \cA,$ $\|\phi(a)\|\leq \|\phi|_{A_e}\|\|a\|$.
\end{remark}

\begin{theorem}\label{thm: integrated form of cp maps between bundles}
    If $\phi\colon \cA\to \cB$ is cp, then there exists a cp map $\tilde{\phi}\colon C^*(\cA)\to C^*(\cB)$ such that $\tilde{\phi}_f= \phi\circ f$, for all $f\in C_c(\cA)$.
    Besides, $\|\tilde{\phi}\|\leq \|\phi|_{A_e}\|$.
    In case $\phi$ has compact support, there exists a (``wrong way'') cp map $\hat{\phi}\colon C^*_\red(\cA)\to C^*(\cB)$ such that $\hat{\phi}\circ \lambda^\cA = \tilde{\phi}$.
    Moreover, $\|\hat{\phi}\|\leq \|\phi|_{A_e}\|$.
\end{theorem}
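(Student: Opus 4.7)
The plan is to reduce the theorem to the already proved Corollary~\ref{cor: cp map with compact support} by post-composing $\phi$ with a carefully chosen \Star{}representation of $\cB$ and then showing that the image of the resulting integrated form sits inside (the image of) $C^*(\cB)$ rather than in all of $\bB(Y)$.

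First I would fix a nondegenerate \Star{}representation $T\colon \cB\to\bB(Y)$ whose integrated form $\tilde T\colon C^*(\cB)\to \bB(Y)$ is isometric (for instance the universal representation). By Remark~\ref{rmk: positivity of maps between bundles}, $T\circ \phi\colon \cA\to \bB(Y)$ is cp, so the integrated form corollary yields a cp map $\widetilde{T\circ \phi}\colon C^*(\cA)\to \bB(Y)$ with $\|\widetilde{T\circ\phi}\|=\|(T\circ\phi)|_{A_e}\|$. For $f\in C_c(\cA)$, the section $\phi\circ f$ belongs to $C_c(\cB)$ because $\phi$ is continuous and fiber-preserving, hence $\widetilde{T\circ \phi}(f)=\tilde T(\phi\circ f)\in \tilde T(C^*(\cB))$. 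By density of $C_c(\cA)$ in $C^*(\cA)$, continuity of $\widetilde{T\circ\phi}$, and norm-closedness of $\tilde T(C^*(\cB))$ (as $\tilde T$ is isometric), the whole image of $\widetilde{T\circ\phi}$ lies inside $\tilde T(C^*(\cB))$. Composing with $\tilde T^{-1}$ produces the desired cp map $\tilde\phi\colon C^*(\cA)\to C^*(\cB)$ with $\tilde\phi(f)=\phi\circ f$ for $f\in C_c(\cA)$. The norm bound follows from $\|(T\circ\phi)|_{A_e}\|\leq \|\phi|_{A_e}\|$, since $T|_{B_e}$ is a contractive \Star{}homomorphism.

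For the compact-support case, observe that $\supp(T\circ \phi)\subseteq \supp(\phi)$ is compact, so Corollary~\ref{cor: cp map with compact support} furnishes a cp map $(\widetilde{T\circ \phi})_\red\colon C^*_\red(\cA)\to \bB(Y)$ with $(\widetilde{T\circ \phi})_\red\circ \lambda^\cA=\widetilde{T\circ \phi}$ and the same norm. Since $\lambda^\cA(C_c(\cA))$ is dense in $C^*_\red(\cA)$ and is carried into $\tilde T(C^*(\cB))$, continuity again forces the image of $(\widetilde{T\circ \phi})_\red$ into $\tilde T(C^*(\cB))$. Composing with $\tilde T^{-1}$ yields $\hat\phi\colon C^*_\red(\cA)\to C^*(\cB)$ satisfying $\hat\phi\circ \lambda^\cA=\tilde\phi$ and $\|\hat\phi\|\leq \|\phi|_{A_e}\|$.

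The main technical point is the ``image lies in $C^*(\cB)$'' step, which rests on the norm-closedness of $\tilde T(C^*(\cB))$ in $\bB(Y)$ together with standard density/continuity bookkeeping. The only other subtlety is Remark~\ref{rmk: positivity of maps between bundles}, which is precisely what makes $T\circ \phi$ cp: the amplifications factor as $(T\circ \phi)^t=T^t\circ \phi^t$, each $T^t$ being a \Star{}homomorphism (in particular positive) and each $\phi^t$ being positive by hypothesis.
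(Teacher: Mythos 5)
Your proposal is correct and follows essentially the same route as the paper: both post-compose $\phi$ with a representation $T$ of $\cB$ whose integrated form is faithful/isometric, invoke the integrated-form corollary for the cp map $T\circ\phi$, and in the compactly supported case reduce to Corollary~\ref{cor: cp map with compact support}. The only cosmetic difference is that you transport everything back through $\tilde T^{-1}$ after checking the image lies in $\tilde T(C^*(\cB))$, whereas the paper reads the same facts as a norm estimate $\|\phi\circ f\|_{C^*(\cB)}\le\|(T\circ\phi)|_{A_e}\|\,\|f\|_{C^*(\cA)}$ (resp.\ $\le\|\phi|_{A_e}\|\,\|\lambda^\cA_f\|$) and extends by continuity, factoring through $\lambda^\cA$ via Remark~\ref{rmk: inclusion of kernels suffices}.
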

\begin{proof}
    Let $T\colon \cB\to \bB(X)$ be a *-representation with faithful integrated form $\tilde{T}\colon C^*(\cB)\to \bB(X).$
    Then $T\circ\phi$ is cp and it has an integrated form $\widetilde{T\circ \phi}$.
    For all $f\in C_c(\cA)$, $\|\phi\circ f\|_{C^*(\cB)}=\|\tilde{T}_{\phi\circ f}\| = \| \widetilde{T\circ \phi}_f \|\leq \|T\circ \phi|_{B_e}\|\|f\|_{C^*(\cA)}$.
    Hence, $C_c(\cA)\to C^*(\cB),$ $f\mapsto \phi\circ f,$ is continuous with respect to the \Cstar-norms.
    The unique continuous extension to $\tilde{\phi}\colon C^*(\cA)\to C^*(\cB)$ is cp because $\tilde{T}\circ \tilde{\phi}=\widetilde{T\circ \phi}$ is cp.

    In case $\phi$ has compact support, by Corollary~\ref{cor: cp map with compact support},  for all $f\in C^*(\cA)$ we have $\| \tilde{\phi}_f \|= \|\tilde{T}\circ \tilde{\phi}_f\|=\|\widetilde{T\circ \phi}_f\|\leq \|T\circ \phi|_{A_e}\|\|\lambda^\cA_f\|\leq \|\phi|_{A_e}\|\|\lambda^\cA_f\|.$
    The rest of the proof follows from Remark~\ref{rmk: inclusion of kernels suffices}.
\end{proof}

\subsubsection{Characterization of completely positive maps between semidirect product bundles}
Assume that $\alpha$ and $\beta$ are \Cstar-actions of $G$ on $A$ and $B$, respectively.
To specify a function $P\colon \cA\to \cB$ between the respective semidirect product bundles such that $P(A\delta_t)\sbe B\delta_t$, for all $t\in G$, is to specify a set of functions $\{P_t\colon A\to B\}_{t\in G}$ ($P(a\delta_t)=P_t(a)\delta_t$).
In Corollary~\ref{cor: equivariant cp map gives cp map} we give a general example where $P_t=P_e$ for all $t\in G,$ but in that case $P$ has compact support if and only if $P=0.$

\begin{lemma}\label{lemma: characterization of cp map between semidirect product bundles}
    With the notation above, $P$ is cp if and only if each one of the $P_t'$s is linear and for all $n\in \bN,$ $t_1,\ldots,t_n\in G$ and $a_1,\ldots,a_n\in A,$ $(\beta_{t_i}(P_{t_i^{-1}t_j}(\alpha_{t_i}^{-1}(a_i^*a_j))))_{i,j=1}^n$ is a positive matrix of $\bM_n(B).$
\end{lemma}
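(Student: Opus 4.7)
The plan is to reduce the matrix positivity condition on $\bM_t(\cA)$ and $\bM_t(\cB)$ to ordinary matrix positivity in $\bM_n(A)$ and $\bM_n(B)$ via an explicit unitary conjugation trick, and then invoke the standard characterization of positivity in $\bM_n$.

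First I would fix $t=(t_1,\ldots,t_n)\in G^n$ and choose a faithful covariant representation $(X,\pi,V)$ of $\beta$, yielding the faithful \Star{}representation $b\delta_s\mapsto \pi(b)V_s$ of $\cB$. Under the induced embedding $\bM_t(\cB)\hookrightarrow \bM_n(\bB(X))$, an element $(c_{i,j}\delta_{t_i^{-1}t_j})$ maps to $(\pi(c_{i,j})V_{t_i^{-1}t_j})$. Conjugating by the diagonal unitary $D:=\mathrm{diag}(V_{t_1},\ldots,V_{t_n})$ turns the $(i,j)$-entry into
\[
V_{t_i}\,\pi(c_{i,j})\,V_{t_i^{-1}t_j}V_{t_j}^* \;=\; V_{t_i}\pi(c_{i,j})V_{t_i}^* \;=\; \pi(\beta_{t_i}(c_{i,j})).
\]
This shows that $(c_{i,j}\delta_{t_i^{-1}t_j})\geq 0$ in $\bM_t(\cB)$ iff $(\beta_{t_i}(c_{i,j}))\geq 0$ in $\bM_n(B)$, and the identical construction gives a \Cstar-isomorphism $\iota_\cA\colon \bM_t(\cA)\cong \bM_n(A)$.

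Assuming each $P_s$ is linear (which, via $P(a\delta_s)=P_s(a)\delta_s$, is equivalent to the linearity of $P$ on each fibre required by the definition of cp), the map $P^t$ is linear, and transporting it across these two isomorphisms yields the linear map $Q\colon \bM_n(A)\to \bM_n(B)$ given by
\[
Q((d_{i,j})) \;=\; \bigl(\beta_{t_i}\bigl(P_{t_i^{-1}t_j}(\alpha_{t_i^{-1}}(d_{i,j}))\bigr)\bigr)_{i,j}.
\]
Positivity of $P^t$ is therefore equivalent to positivity of $Q$. Now invoke the standard characterization: a linear $Q\colon \bM_n(A)\to\bM_n(B)$ is positive iff $Q((a_i^*a_j))\geq 0$ in $\bM_n(B)$ for every $(a_1,\ldots,a_n)\in A^n$. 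The forward direction is immediate because $(a_i^*a_j)=R^*R$ for the row $R=(a_1,\ldots,a_n)$. For the converse, every $Y\in\bM_n(A)^+$ decomposes as $Z^*Z=\sum_k R_k^*R_k$ where $R_k$ runs through the rows of a square root $Z$, so linearity of $Q$ reduces positivity to this elementary case. Applied to our $Q$, the elementary image is precisely $(\beta_{t_i}(P_{t_i^{-1}t_j}(\alpha_{t_i^{-1}}(a_i^*a_j))))_{i,j}$, which is the condition stated in the lemma.

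The only real technical friction is clean index bookkeeping in the unitary conjugation step, particularly the cancellation $V_{t_i^{-1}t_j}V_{t_j}^*=V_{t_i}^*$; once that is correctly performed, the rest is formal. Note that continuity of $P$ is part of the definition of cp between Fell bundles and is not recovered from the purely algebraic matrix positivity condition, so it should be read as standing alongside the hypotheses of the lemma.
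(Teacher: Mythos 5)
Your proof is correct and follows essentially the same route as the paper: both arguments identify $\bM_t(\cA)$ with $\bM_n(A)$ via the $*$\nobreakdash-isomorphism $(a_{i,j})\mapsto(\alpha_{t_i}^{-1}(a_{i,j})\delta_{t_i^{-1}t_j})$ (which you verify by conjugating with the diagonal unitary $\mathrm{diag}(V_{t_1},\dots,V_{t_n})$ in a faithful covariant representation, where the paper simply asserts it), and then reduce positivity to matrices of the form $(a_i^*a_j)$ by decomposing a positive element of $\bM_n(A)$ into rows of a square root. Your closing remark that continuity must be carried alongside the algebraic condition is a fair reading of the lemma and matches how the paper uses it.
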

\begin{proof}
    Fix $n\in \bN$ and $t=(t_1,\ldots,t_n)\in G^n$. It is easy to check that
    \begin{align*}
       \pi_\alpha^t\colon & \bM_n(A)\to \bM_t(\cA) & \pi_\alpha^t((a_{i,j})_{i,j=1}^n) & = (\alpha_{t_i}^{-1}(a_{i,j})\delta_{t_i^{-1}t_j})_{i,j=1}^n
    \end{align*}
    is a *-isomorphism.

    Each positive $a\in \bM_n(A)$ is a sum of $n$ elements of the form $(a_i^*a_j)_{i,j=1}^n$ and
    \[ (\pi_\beta^t)^{-1}\circ P^t\circ\pi_\alpha^t( (a_i^*a_j)_{i,j=1}^n ) = (\pi_\beta^t)^{-1}(P_{t_i^{-1}t_j}(\alpha_{t_i}^{-1}(a_i^*a_j))\delta_{t_i^{-1}t_j})_{i,j=1}^n  =(\beta_{t_i}(P_{t_i^{-1}t_j}(\alpha_{t_i}^{-1}(a_i^*a_j))))_{i,j=1}^n . \]
    Hence, $P^t$ is positive if and only if for all $n\in \bN, $ $t_1,\ldots t_n\in G$ and $a_1,\ldots,a_n\in A$, the last matrix on the right above is positive.
\end{proof}

\begin{corollary}\label{cor: equivariant cp map gives cp map}
    If $P\colon A\to B$ is an equivariant cp map, then $\phi\colon \cA\to \cB$ given by $\phi(a\delta_s)=P(a)\delta_s$, is cp. 
    In addition, $\|\phi|_{A\delta_e}\|=\|P\|$.
\end{corollary}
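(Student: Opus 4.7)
The plan is to apply Lemma~\ref{lemma: characterization of cp map between semidirect product bundles} directly: in the notation of that lemma, setting $P_t := P$ for all $t \in G$ immediately identifies the cp condition for $\phi$ with a matrix-positivity statement that collapses, using equivariance of $P$, to the usual complete positivity of $P$. So there is almost nothing to do beyond unpacking definitions.

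First I would check continuity of $\phi$. Since the semidirect product Fell bundle has the product topology inherited from $A \times G$ (with the norm topology on $A$), the map $\phi(a\delta_s)=P(a)\delta_s$ is continuous precisely because $P$ is norm-continuous (it is cp, hence bounded). Fiber-preservation is by construction, and linearity on each fiber $A\delta_s$ reduces to linearity of $P$. Next, I would invoke Lemma~\ref{lemma: characterization of cp map between semidirect product bundles}: $\phi$ is cp iff, for every $n\in\bN$, every $t_1,\ldots,t_n\in G$ and every $a_1,\ldots,a_n\in A$, the matrix
\[
M := \bigl(\beta_{t_i}(P(\alpha_{t_i^{-1}}(a_i^*a_j)))\bigr)_{i,j=1}^n
\]
is positive in $\bM_n(B)$.

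Here is where equivariance enters. Since $P\circ\alpha_t=\beta_t\circ P$, for each entry we have $\beta_{t_i}(P(\alpha_{t_i^{-1}}(a_i^*a_j)))=P(a_i^*a_j)$, so $M=P^{(n)}\bigl((a_i^*a_j)_{i,j=1}^n\bigr)$. The matrix $(a_i^*a_j)_{i,j}$ is positive in $\bM_n(A)$ (it is the Gram-type matrix of the column $(a_1,\ldots,a_n)^t$), and by complete positivity of $P$ the map $P^{(n)}$ preserves positivity; hence $M\geq 0$. This verifies the hypothesis of the lemma and shows $\phi$ is cp.

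Finally, for the norm identity, note that $\phi|_{A\delta_e}(a\delta_e)=P(a)\delta_e$ and, since in semidirect product bundles $\|a\delta_e\|=\|a\|$ and $\|P(a)\delta_e\|=\|P(a)\|$, we obtain $\|\phi|_{A\delta_e}\|=\|P\|$ at once. I do not foresee any genuine obstacle: the only content is the equivariance-driven collapse of the matrix $M$, and that is a one-line computation. Essentially this corollary is just a repackaging of Lemma~\ref{lemma: characterization of cp map between semidirect product bundles} in the presence of equivariance.
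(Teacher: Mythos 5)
Your proposal is correct and follows essentially the same route as the paper: both reduce to Lemma~\ref{lemma: characterization of cp map between semidirect product bundles} with $P_t=P$, use equivariance to collapse the matrix $(\beta_{t_i}(P(\alpha_{t_i}^{-1}(a_i^*a_j))))_{i,j}$ to $(P(a_i^*a_j))_{i,j}$, and conclude positivity from complete positivity of $P$. Your treatment of the norm identity is in fact slightly cleaner, since the paper's intermediate inequality $\|P(a)\|\leq\|a\|$ tacitly assumes contractivity, whereas your direct computation $\|\phi(a\delta_e)\|=\|P(a)\|$ with $\|a\delta_e\|=\|a\|$ gives $\|\phi|_{A\delta_e}\|=\|P\|$ without it.
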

\begin{proof}
    With the notation of Lemma~\ref{lemma: characterization of cp map between semidirect product bundles}, $P_t=P$ for all $t\in G$ and 
    \[ (\beta_{t_i}(P_{t_i^{-1}t_j}(\alpha_{t_i}^{-1}(a_i^*a_j))))_{i,j=1}^n =(P(a_i^*a_j))_{i,j=1}^n\]
    is a positive matrix.
    For all $a\in A$, $\|\phi(a\delta_e)\|=\|P(a)\|\leq \|a\|=\|a\delta_e\|$.
    Hence, $\|\phi|_{A\delta_e}\|=\|P\|$.
\end{proof}

\subsection{Equivalence of the BCAP and \Cstar-amenability}

\begin{definition}\label{defi: BCAP}
A Fell bundle $\cB=\{B_t\}_{t\in G}$ has the approximation property of Bédos and Conti (BCAP) if there exists a net of ccp maps with compact support $\{\Phi_i\colon \cB\to \cB\}_{i\in I}$ that converges to $\id_\cB$ uniformly on compact slices.
The BCAP$^1$ differs only in the convergence condition: it requires $L^1$-convergence on compact slices.
\end{definition}

In the definition above the nets are bounded, so BCAP implies BCAP$^1$.
To prove that both are equivalent to the amenability of the bundle we show two results that should be true if the equivalence holds (Propositions~\ref{prop: some easy properties} and~\ref{prop: BCAP and diagonal tensor product}).
For Fell bundles over discrete groups, the Proposition below follows from the results of \cite{bedos2024positiveMZ}.

\begin{proposition}\label{prop: some easy properties}
    The following claims hold for every Fell bundle $\cB=\{B_t\}_{t\in G}$.
    \begin{enumerate}
        \item If $\cB$ \Cstar-amenable, then $\cB$ has the BCAP.
        \item If $\cB$ has the BCAP$^1$, then $\cB$ has the \wcp.
    \end{enumerate}
\end{proposition}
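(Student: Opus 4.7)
The plan is to argue the two parts of the proposition independently.

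For (1), I will invoke the equivalence between \Cstar-amenability and the positive $1$-approximation property (Theorem~\ref{thm: main thm about approximation properties and amenability of Fell bundles}) to obtain a net $\{\xi_i\}_{i\in I}\subseteq C_c(G,B_e)$ with $\|\xi_i\|_2\leq 1$ such that $\{\Phi_{\xi_i}\}$ converges to $\id_\cB$ uniformly on compact slices. The claim then reduces to verifying that each $\Phi_{\xi_i}\colon\cB\to\cB$ is a ccp map of Fell bundles with compact support, which automatically gives the BCAP (and hence the BCAP$^1$). Compact support is immediate: if $\supp(\xi_i)\subseteq K$ with $K$ compact, then for $b\in B_t$ the integrand defining $\Phi_{\xi_i}(b)$ is nonzero only when $s\in K$ and $s^{-1}t\in K$, forcing $t\in K\cdot K$, so $\supp(\Phi_{\xi_i})\subseteq K\cdot K$. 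Contractivity follows from the pointwise estimate $\|\Phi_{\xi_i}(b)\|\leq \|b\|\,\|\xi_i\|_2^2\leq\|b\|$ recorded in Section~\ref{sec: amenability of Fell bundles}, which yields $\|\Phi_{\xi_i}|_{B_e}\|\leq 1$.

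The nontrivial piece in (1) is the complete positivity. I plan to exhibit $\Phi_{\xi_i}$ as a compression in the sense of Remark~\ref{rmk: basic example of cp maps}. Fix a *-representation $T\colon\cB\to\bB(X)$ with $T|_{B_e}$ faithful, let $\tilde T\colon\cB\to\bB(L^2(G)\otimes X)$ be the associated induced regular representation, and define $V_{\xi_i}\colon X\to L^2(G)\otimes X$ by $(V_{\xi_i} x)(s):=T_{\xi_i(s)}x$. A direct calculation, after fixing the convention of $\tilde T$, gives the dilation identity $T(\Phi_{\xi_i}(b))=V_{\xi_i}^*\,\tilde T_b\, V_{\xi_i}$ for every $b\in\cB$. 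By Remark~\ref{rmk: positivity of maps between bundles}, this is enough to conclude that $\Phi_{\xi_i}$ is cp as a map of Fell bundles.

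For (2), let $\{\Phi_i\colon\cB\to\cB\}_{i\in I}$ be a net of ccp maps with compact support that $L^1$-converges to $\id_\cB$ on compact slices. By Theorem~\ref{thm: integrated form of cp maps between bundles}, each $\Phi_i$ admits an integrated form $\tilde{\Phi}_i\colon C^*(\cB)\to C^*(\cB)$ satisfying $\tilde{\Phi}_i(f)=\Phi_i\circ f$ on $C_c(\cB)$, and it factors through a cp map $\hat{\Phi}_i\colon C^*_\red(\cB)\to C^*(\cB)$ with $\hat{\Phi}_i\circ\lambda^\cB=\tilde{\Phi}_i$ and $\|\hat{\Phi}_i\|\leq 1$. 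Fix $f\in C_c(\cB)$. The contractive inclusion $L^1(\cB)\hookrightarrow C^*(\cB)$ turns the $L^1$-hypothesis into $\|\tilde{\Phi}_i(f)-f\|_{C^*(\cB)}\leq\|\Phi_i\circ f-f\|_1\to 0$, and therefore
\[
\|f\|_{C^*(\cB)}=\lim_i\|\tilde{\Phi}_i(f)\|_{C^*(\cB)}=\lim_i\|\hat{\Phi}_i(\lambda^\cB(f))\|_{C^*(\cB)}\leq\|\lambda^\cB(f)\|_{C^*_\red(\cB)}.
\]
Combined with the trivial reverse inequality and density of $C_c(\cB)$, this shows $\lambda^\cB$ is isometric, hence injective; this is the \wcp.

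The main obstacle I anticipate is the complete positivity verification in (1): the check itself is standard once the dilation identity is in place, but matching the conventions of $\Phi_\xi$ (defined here via $\xi(s)^*b\eta(s^{-1}t)$) with a convenient induced regular representation on $L^2(G)\otimes X$ so that $T\circ\Phi_{\xi_i}=V_{\xi_i}^*\tilde T(\cdot)V_{\xi_i}$ on the nose requires care. Everything else—the compact-support check in (1) and the short norm argument in (2)—is bookkeeping modulo Theorem~\ref{thm: integrated form of cp maps between bundles}.
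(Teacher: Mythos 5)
Your proposal is correct and follows essentially the same route as the paper: part (1) is proved by exhibiting $T\circ\Phi_{\xi_i}$ as a compression $V_{\xi_i}^*S(\cdot)V_{\xi_i}$ of the regular representation $b\mapsto\lambda_t\otimes T_b$ and invoking Remark~\ref{rmk: positivity of maps between bundles}, and part (2) is exactly the paper's isometry argument via the factorization $\hat{\Phi}_i\circ\lambda^\cB=\tilde{\Phi}_i$ from Theorem~\ref{thm: integrated form of cp maps between bundles}. The only cosmetic differences are that you obtain contractivity from the pointwise bound $\|\Phi_{\xi_i}(b)\|\le\|b\|\,\|\xi_i\|_2^2$ rather than from $\|V_{\xi_i}\|\le 1$, and your support estimate reads $K\cdot K$ where the paper writes $\supp(\xi_i)\supp(\xi_i)^{-1}$; both are immaterial.
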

\begin{proof}
    Let $\{\xi_i\}_{i\in I}\sbe C_c(G,B_e)$ be a net witnessing the positive $1$-approximation property of $\cB$.
    By definition, $\{\Phi_{\xi_i}\colon \cB\to \cB\}_{i\in I}$ is a net of continuous maps that converges to $\id_\cB$ on compact slices.
    To prove that each $\phi_{\xi_i}$ is cp we take a *-representation $T\colon \cB\to \bB(X)$ with $T|_{B_e}$ faithful.
    Define $V_i\colon X\to \ltwo{G}\times X$ by $V_ix(t)=T_{\xi_i(t)}a$ and $S\colon \cB\to \bB(\ltwo{G}\otimes Y)$  by $S(b\in B_t)=\lambda_t\otimes T_b$.
    For all $x,y\in X,$ $t\in G$ and $b\in B_t$ we have
    \begin{align*}
        \langle x,V_i^* S_b V_i y\rangle 
        & = \langle V_ix,S_b V_i y\rangle
         = \int_G\langle T_{\xi_i(s)}x,T_b T_{\xi_i(\tmu s)}y\rangle ds
         = \int_G \langle x,T_{\xi_i(s)^* b \xi_i(\tmu s)}y\rangle ds\\
        & = \langle x,T\circ \Phi_{\xi_i}(b)y\rangle.
    \end{align*}
    Then $V_i^* S_b V_i = T\circ \Phi_{\xi_i}(b)$ and Remark~\ref{rmk: positivity of maps between bundles} implies that $\Phi_{\xi_i}$ is cp.
    Notice that $\|V_i\|\leq 1$, so $\|\Phi_{\xi_i}|_{B_e}\|=\|T\circ \Phi_{\xi_i}|_{B_e}\| = \|V_i ^*S|_{B_e}V_i\|\leq 1$.

    Say $\Phi_{\xi_i}(b)\neq 0$ with $b\in B_t$.
    By the definition of $\Phi_{\xi_i}(b)$, there exists $s\in \supp(\xi_i)$ such that $\tmu s\in \supp(\xi_i)$.
    This yields $\supp(\Phi_{\xi_i})\sbe \supp(\xi_i)\supp(\xi_i)^{-1}$; which completes the proof of the first claim.

    Assume that $\cB$ has the BCAP$^1$ and take a net $\{\Phi_i\}_{i\in I}$ witnessing such property.
    By Theorem~\ref{thm: integrated form of cp maps between bundles}, for all $f\in C_c(\cB)$ we have $\|\Phi_i\circ f\|_{C^*(\cB)}\leq \|f\|_{C^*_\red(\cB)}$.
    In addition, $\|\Phi_i\circ f - f\|_{C^*(\cB)}\leq \|\Phi_i\circ f-f\|_1$.
    Hence, $\|f\|_{C^*(\cB)}=\lim_i \|\Phi_i\circ f\|_{C^*(\cB)} \leq \|f\|_{C^*_\red(\cB)}\leq \|f\|_{C^*(\cB)}$ and this implies that $\lambda^\cB\colon C^*(\cB)\to C^*_\red(\cB)$ is an isometry.
\end{proof}

\begin{lemma}
    If $\cA=\{A_t\}_{t\in G}$, $\cB=\{B_t\}_{t\in G}$ and $\cC=\{C_t\}_{t\in H}$ are Fell bundles and $\Phi\colon \cA\to \cB$ is a cp map, then there exists a cp map $\Phi\bigotimes_{\max}\id_\cC\colon \cA\bigotimes_{\max}\cC\to \cB\bigotimes_{\max}\cC$ such that $\Phi\bigotimes_{\max}\id_\cC (a\otimes c)=\Phi(a)\otimes c$.
    In addition, $\|\Phi\bigotimes_{\max}\id_\cC|_{A_e\otmax B_e}\|=\|\Phi|_{A_e}\|$.
\end{lemma}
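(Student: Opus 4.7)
The plan is to construct $\Phi\bigotimes_{\max}\id_\cC$ by a dilation argument: dilate $\Phi$ via Stinespring, tensor the dilation with a faithful *\nobreakdash-representation of $\cC$, and then factor the resulting cp map through the fibrewise isometric *\nobreakdash-representation of $\cB\bigotimes_{\max}\cC$ obtained from Abadie's universal construction of $\bigotimes_{\max}$.

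Concretely, first fix faithful *\nobreakdash-representations $\pi\colon\cB\to\bB(Y)$ and $\sigma\colon\cC\to\bB(Z)$ with $\pi|_{B_e}$ and $\sigma|_{C_e}$ faithful, so that $\pi$ and $\sigma$ are fibrewise isometric. By Remark~\ref{rmk: positivity of maps between bundles}, $\pi\circ\Phi\colon\cA\to\bB(Y)$ is cp, so Stinespring yields a *\nobreakdash-representation $T\colon\cA\to\bB(W)$ and $V\in\bB(Y,W)$ with $\pi\circ\Phi(a)=V^*T(a)V$ and $\|V\|^2=\|\Phi|_{A_e}\|$. The commuting copies of $T$ and $\sigma$ on $W\otimes Z$ assemble, by Abadie's construction of $\bigotimes_{\max}$, into a *\nobreakdash-representation $T\boxtimes\sigma\colon\cA\bigotimes_{\max}\cC\to\bB(W\otimes Z)$ sending $a\otimes c$ to $T(a)\otimes\sigma(c)$; analogously, one obtains $\pi\boxtimes\sigma\colon\cB\bigotimes_{\max}\cC\to\bB(Y\otimes Z)$, which is faithful on the unit fibre $B_e\otmax C_e$ (by the defining property of $\otmax$) and is therefore isometric on every fibre $B_s\otmax C_t$.

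Define $\Psi\colon\cA\bigotimes_{\max}\cC\to\bB(Y\otimes Z)$ by $\Psi(\xi)=(V\otimes 1)^*(T\boxtimes\sigma)(\xi)(V\otimes 1)$; this map is cp by Remark~\ref{rmk: basic example of cp maps}. On elementary tensors one computes $\Psi(a\otimes c)=V^*T(a)V\otimes\sigma(c)=\pi(\Phi(a))\otimes\sigma(c)=(\pi\boxtimes\sigma)(\Phi(a)\otimes c)$, so the image of $\Psi$ on each fibre $A_s\otmax C_t$ lies in $(\pi\boxtimes\sigma)(B_s\otmax C_t)$. Since $\pi\boxtimes\sigma$ is fibrewise isometric, there is a unique fibre-preserving map $\Phi\bigotimes_{\max}\id_\cC\colon\cA\bigotimes_{\max}\cC\to\cB\bigotimes_{\max}\cC$ with $(\pi\boxtimes\sigma)\circ(\Phi\bigotimes_{\max}\id_\cC)=\Psi$, and it automatically sends $a\otimes c$ to $\Phi(a)\otimes c$. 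Continuity transfers from that of $\Psi$ through the fibrewise isometry $\pi\boxtimes\sigma$, and complete positivity of $\Phi\bigotimes_{\max}\id_\cC$ follows by applying Remark~\ref{rmk: positivity of maps between bundles} with the faithful test representation $\pi\boxtimes\sigma$ of $\cB\bigotimes_{\max}\cC$.

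For the norm identity, $\Phi\bigotimes_{\max}\id_\cC|_{A_e\otmax C_e}$ is exactly the classical maximal-tensor-product extension of the cp map $\Phi|_{A_e}\colon A_e\to B_e$ with $\id_{C_e}$; the identity $\|\Phi|_{A_e}\otmax\id_{C_e}\|=\|\Phi|_{A_e}\|$ is a standard \Cstar-algebraic fact (the inequality $\le$ is the usual bound on maximal tensor products of cp maps; the reverse follows by restricting to elements of the form $a\otimes e_\lambda$ for $\{e_\lambda\}$ an approximate unit of $C_e$). The main obstacle I anticipate is the careful invocation of Abadie's tensor-product machinery in Step~2 — namely, producing the bundle-level *\nobreakdash-representations $T\boxtimes\sigma$ and $\pi\boxtimes\sigma$ of the max tensor bundles and verifying that $\pi\boxtimes\sigma$ is fibrewise isometric on every fibre $B_s\otmax C_t$, not just on $B_e\otmax C_e$; once this is in hand the rest reduces to a direct Stinespring/factorisation argument.
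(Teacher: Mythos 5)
There is a genuine gap at the heart of your factorisation step: the representation $\pi\boxtimes\sigma$ of $\cB\bigotimes_{\max}\cC$ acting on $Y\otimes Z$ is \emph{not} faithful on the unit fibre $B_e\otmax C_e$ in general, and hence is not fibrewise isometric. The universal property of $\otmax$ only guarantees that the commuting pair $(\pi,\sigma)$ \emph{induces} a representation of $B_e\otmax C_e$ on $Y\otimes Z$; it does not make that representation injective. Indeed, for faithful $\pi$ and $\sigma$ the norm of $(\pi\otimes\sigma)(x)$ on $Y\otimes Z$ is exactly the minimal tensor norm of $x$, so $\ker\bigl((\pi\boxtimes\sigma)|_{B_e\otmax C_e}\bigr)$ is the kernel of the canonical surjection $B_e\otmax C_e\onto B_e\otmin C_e$, which is nonzero unless the two norms coincide (e.g.\ when one of $B_e$, $C_e$ is nuclear). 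Since the norm on each fibre $B_s\otmax C_t$ is computed from the $A_e\otmax C_e$\nobreakdash-valued inner product via the \Cstar{}identity, the failure at the unit fibre propagates to every fibre. Consequently you cannot lift $\Psi$ through $\pi\boxtimes\sigma$: the map you would obtain lands, in effect, in the minimal tensor product of the fibres, and the existence of a bounded map into $B_s\otmax C_t$ --- which is precisely the content of the lemma --- is not established.

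The paper avoids this by never using a product-form representation of the target bundle. It first forms the integrated map $\tilde\Phi\colon C^*(\cA)\to C^*(\cB)$ and then the standard maximal tensor product of cp maps $\tilde\Phi\otmax\id\colon C^*(\cA)\otmax C^*(\cC)\to C^*(\cB)\otmax C^*(\cC)$ (a construction that is compatible with maximal norms by universality), transports it through Abadie's isomorphisms $C^*(\cA)\otmax C^*(\cC)\cong C^*(\cA\bigotimes_{\max}\cC)$ and $C^*(\cB)\otmax C^*(\cC)\cong C^*(\cB\bigotimes_{\max}\cC)$, and only then picks a nondegenerate representation $T$ of $\cB\bigotimes_{\max}\cC$ whose restriction to every fibre is isometric (such a $T$ exists for any Fell bundle but need not have the form $\pi\otimes\sigma$), performs the Stinespring dilation at the level of $C^*(\cA\bigotimes_{\max}\cC)$, and disintegrates to recover the fibrewise map. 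If you want to salvage your dilation-first strategy, you would have to replace $\pi\boxtimes\sigma$ by such a fibrewise-isometric $T$ and then relate your dilation of $\pi\circ\Phi$ to $T$, which essentially forces you back to the $C^*$\nobreakdash-algebra-level argument. Your treatment of the norm identity on the unit fibre is fine once the map exists.
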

\begin{proof}
    The integrated form $\tilde{\Phi}\colon C^*(\cA)\to C^*(\cB)$ induces a cp map 
    \[\tilde{\Phi}\otmax \id \colon C^*(\cA)\otmax C^*(\cC)\to C^*(\cB)\otmax C^*(\cC)\]
    such that for all $f\in C_c(\cA)$ and $g\in C_c(\cC)$, $(\tilde{\Phi}\otmax \id)(f\otimes g) = (\Phi\circ f)\otimes g$.
    By \cite[Proposition 4.2]{abadie1997tensorv3}, there exist \Cstar-isomorphisms 
    \begin{align*}
        \mu & \colon C^*(\cA)\otmax C^*(\cC)\to C^*(\cA\bigotimes_{\max} \cC) & \nu & \colon C^*(\cB)\otmax C^*(\cC)\to C^*(\cB\bigotimes_{\max} \cC)
    \end{align*}
    such that for all $f\in C_c(\cA)$, $g\in C_c(\cB)$ and $h\in C_c(\cC)$, $\mu(f\otimes h)=f\oslash h$ and $\nu (g\otimes h)=g\oslash h$.
    The cp map 
    \begin{align*}
       \Psi:= \nu\circ (\tilde{\Phi}\otmax \id )\circ \mu^{-1} \colon C^*(\cA\bigotimes_{\max} \cC)\to C^*(\cB\bigotimes_{\max} \cC)
    \end{align*}
    is the unique continuous linear function such that for all $f\in C_c(\cA)$ and $g\in C_c(\cC)$, $\Psi(f\oslash g) = (\Phi\circ f)\oslash g$.

    Take a nondegenerate *-representation $T\colon \cB\bigotimes_{\max} \cC\to \bB(X)$ whose restriction to any fiber is isometric.
    If $\tilde{T}\colon C^*(\cB\bigotimes_{\max} \cC)\to \bB(X)$ is the integrated form of $T$, then $\tilde{T}\circ \Psi$ is cp and it has a Stinespring dilation $\tilde{S}\colon C^*(\cA\bigotimes_{\max} \cC)\to \bB(Y)$ with associated map $V\in \bB(X,Y)$.
    Naturally, we name $S\colon \cA\bigotimes_{\max} \cC\to \bB(Y)$ the disintegrated form of $\tilde{S}$.
    Using \cite[VIII 11.8]{FlDr88} and the fact that $\Psi(f\oslash g) = (\Phi\circ f)\oslash g$ get get $V^*S_{f(s)\otimes g(t)}V = T_{\Phi(f(s))\otimes g(t)}$, for all $f\in C_c(\cA)$, $g\in C_c(\cC)$ and $(s,t)\in G\times H$.
    This implies $V^*S(A_s\otmax C_t)V\sbe T(B_s\otmax C_t)$, then there exists a function $\Phi\bigotimes_{\max}\id_\cC\colon \cA\bigotimes_{\max}\cC\to \cB\bigotimes_{\max}\cC$  such that $(\Phi\bigotimes_{\max}\id_\cC)(A_s\otmax C_t)\sbe B_s\otmax C_t$ and $T\circ \Phi (z) =V^* S_z V,$ for all $(s,t)\in G\times H$ and $z\in \cA\bigotimes_{\max}\cC$.
    This implies $(\Phi\bigotimes_{\max}\id_\cC)(a\otimes c) = \Phi(a)\otimes c$ and $\| \Phi (z) \|\leq \|V\|^2\|z\|=\|\tilde{\Phi}\|\|z\|=\|\Phi|_{A_e}\|\|z\|$.
    We have $(\Phi\bigotimes_{\max}\id_\cC)(f\oslash g) = (\Phi\circ f)\oslash g$, so \cite[II 13.16]{FlDr88} implies $\Phi\bigotimes_{\max}\id_\cC$ is continuous and it is cp because $T\circ \Phi (z) =V^* S_z V$ (see Remark~\ref{rmk: positivity of maps between bundles}).
    Finally, $\|(\Phi\bigotimes_{\max}\id_\cC)|_{A_e\otmax C_e}\| = \|\Phi|_{A_e}\otmax \id_{C_e}\|=\|\Phi|_{A_e}\|$.
\end{proof}

\begin{proposition}\label{prop: BCAP and diagonal tensor product}
    If $\cA=\{A_t\}_{t\in G}$ and $\cB=\{B_t\}_{t\in G}$ are Fell bundles and at least one of them has the BCAP$^1$, then  $\cA\otmax^d\cB$ has the BCAP$^1$. 
\end{proposition}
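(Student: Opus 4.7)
The plan is to reduce to the preceding lemma by assuming, without loss of generality, that $\cA$ has the BCAP$^1$, and then to transport a witnessing net along the construction $\Phi\mapsto \Phi\bigotimes_{\max}\id_\cB$ before restricting to the diagonal.

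First I would fix a net $\{\Phi_i\colon\cA\to\cA\}_{i\in I}$ of ccp maps with compact support that $L^1$-converges to $\id_\cA$ on compact slices. The preceding lemma (applied with $\cC:=\cB$) produces, for each $i$, a cp map
\[
   \tilde{\Phi}_i:=\Phi_i\bigotimes_{\max}\id_\cB\colon \cA\bigotimes_{\max}\cB\longrightarrow \cA\bigotimes_{\max}\cB
\]
satisfying $\tilde{\Phi}_i(a\otimes b)=\Phi_i(a)\otimes b$ and $\|\tilde{\Phi}_i|_{A_e\otmax B_e}\|=\|\Phi_i|_{A_e}\|\leq 1$. Since by definition $\cA\otmax^d\cB$ is the restriction of $\cA\bigotimes_{\max}\cB$ to the diagonal $G\hookrightarrow G\times G$, I define $\Psi_i:=\tilde{\Phi}_i|_{\cA\otmax^d\cB}$. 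Each $\Psi_i$ is continuous, fiber-preserving, and fiberwise linear, and $\supp(\Psi_i)\subseteq \supp(\Phi_i)$ is compact. Complete positivity passes to the restriction because, for every $t=(t_1,\ldots,t_n)\in G^n$, the matrix algebra $\bM_t(\cA\otmax^d\cB)$ embeds naturally into $\bM_{((t_1,t_1),\ldots,(t_n,t_n))}(\cA\bigotimes_{\max}\cB)$, and $\Psi_i^t$ is precisely the corresponding restriction of $\tilde{\Phi}_i^{((t_1,t_1),\ldots,(t_n,t_n))}$; similarly $\|\Psi_i|_{(A\otmax B)_e}\|\leq \|\tilde{\Phi}_i|_{A_e\otmax B_e}\|\leq 1$, so each $\Psi_i$ is ccp.

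Next I would verify $L^1$-convergence on an easy class of sections and then invoke Lemma~\ref{lemma: convergence on gamma suffices}. Let $\Gamma:=\{f\oslash^d g : f\in C_c(\cA),\,g\in C_c(\cB)\}$. For every $f\in C_c(\cA)$ and $g\in C_c(\cB)$, the tensor norm inequality $\|x\otimes y\|\leq \|x\|\|y\|$ gives
\[
   \|\Psi_i\circ(f\oslash^d g) - f\oslash^d g\|_1
   = \int_G \|(\Phi_i(f(t))-f(t))\otimes g(t)\|\,dt
   \;\leq\; \|g\|_\infty\,\|\Phi_i\circ f - f\|_1 ,
\]
which tends to zero by hypothesis on $\cA$. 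Since $C_c(G)\Gamma'\subseteq \Gamma'$ and, for each $t\in G$, the set $\{u(t):u\in\Gamma'\}$ is dense in $A_t\otmax B_t$, the $L^1$-variant of Lemma~\ref{lemma: convergence on gamma suffices}, applied with the uniform bound $\sup_i\|\Psi_i\|\leq 1$, extends the convergence to every $h\in C_c(\cA\otmax^d\cB)$. This yields the BCAP$^1$ of $\cA\otmax^d\cB$.

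The main thing to watch is the claim that restricting $\tilde{\Phi}_i$ to the diagonal genuinely yields a ccp map of Fell bundles over $G$; the matrix-algebra identification sketched above is what makes this routine rather than a real obstacle. Once that identification is in hand, the rest of the proof is a direct norm estimate followed by an application of the lemma on convergence along dense classes of sections.
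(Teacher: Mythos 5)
Your proposal is correct and follows essentially the same route as the paper: assume $\cA$ has the BCAP$^1$, apply the preceding lemma to obtain $\Phi_i\bigotimes_{\max}\id_\cB$, restrict to the diagonal, establish the $L^1$ estimate $\|\Psi_i\circ(f\oslash^d g)-f\oslash^d g\|_1\leq\|\Phi_i\circ f-f\|_1\,\|g\|_\infty$ on elementary sections, and conclude via Lemma~\ref{lemma: convergence on gamma suffices}. The only difference is that you spell out why complete positivity survives restriction to the diagonal (the identification $\bM_t(\cA\otmax^d\cB)=\bM_{((t_1,t_1),\ldots,(t_n,t_n))}(\cA\bigotimes_{\max}\cB)$), a point the paper leaves implicit.
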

\begin{proof}
    Let $\{\Phi_i\}_{i\in I}$ be a net of maps witnessing the BCAP$^1$ of $\cA$ and define 
    \[\Phi_i\otmax^d \id_\cB\colon \cA\otmax^d\cB\to \cA\otmax^d\cB\]
    as the restriction of $\Phi_i\bigotimes_{\max}\id_\cB$ to the reduction $\cA\otmax^d\cB = (\cA\bigotimes_{\max}\cB)_G$.
    Then $\{\Phi_i\otmax^d \id_\cB\}_{i\in I}$ is a net of cp maps and $\forall\, i\in I$, $\|(\Phi_i\otmax^d \id_\cB)|_{A_e\otmax B_e}\|=\|(\Phi_i\bigotimes_{\max}\id_\cB)|_{A_e\otmax B_e}\|=\|\Phi_i\|\leq 1$.

    For all $f\in C_c(\cA)$ and $g\in C_c(\cB)$, $(\Phi_i\otmax^d \id_\cB)\circ (f\oslash^d g) = (\Phi_i\circ f)\otmax^d g$.
    Hence, 
    \[\| (\Phi_i\otmax^d \id_\cB)\circ (f\oslash^d g) - f\oslash^d g\|_1 \leq  \|\Phi_i\circ f - f\|_1 \|g\|_\infty \]
    and taking limit in $i$ it follows that $\lim_i\| (\Phi_i\otmax^d \id_\cB)\circ (f\oslash^d g) - f\oslash^d g\|_1=0$.
    Taking $\Gamma:=\spn \{f\oslash^d g\colon f\in C_c(\cA),\ g\in C_c(\cB)\}$ and using Lemma~\ref{lemma: convergence on gamma suffices} we get that  $\cA\otmax^d\cB$ has the BCAP$^1$.
\end{proof}

\begin{corollary}\label{cor:another characterization of amenability}
    For Fell bundles, \Cstar-amenability is equivalent to the BCAP and to the BCAP$^1$.
\end{corollary}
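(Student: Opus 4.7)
The plan is to chain together the ingredients already proved: Proposition~\ref{prop: some easy properties}, Proposition~\ref{prop: BCAP and diagonal tensor product} and Theorem~\ref{thm: main thm about approximation properties and amenability of Fell bundles}. Concretely, I would close the circle
\[
   \text{\Cstar-amenability} \;\Longrightarrow\; \text{BCAP} \;\Longrightarrow\; \text{BCAP}^1 \;\Longrightarrow\; \text{\Cstar-amenability}.
\]

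First, the implication \Cstar-amenability $\Rightarrow$ BCAP is exactly part (1) of Proposition~\ref{prop: some easy properties}. Next, BCAP $\Rightarrow$ BCAP$^1$ is immediate: a net witnessing the BCAP consists of ccp maps with compact support converging to $\id_\cB$ uniformly on compact slices, and since uniform convergence on compact slices trivially implies $L^1$-convergence on compact slices (as both norms are taken over the same fixed compact support), the same net witnesses the BCAP$^1$.

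The content is the last implication: BCAP$^1$ $\Rightarrow$ \Cstar-amenability. Fix a Fell bundle $\cB$ with the BCAP$^1$ and let $\cA$ be any Fell bundle over $G$. By Proposition~\ref{prop: BCAP and diagonal tensor product} (applied with the BCAP$^1$ hypothesis on $\cB$), the diagonal tensor product $\cA\otmax^d\cB$ also has the BCAP$^1$. Then by part (2) of Proposition~\ref{prop: some easy properties}, $\cA\otmax^d\cB$ has the \wcp. Since $\cA$ was arbitrary, condition \eqref{item: diagonal product has wcp} of Theorem~\ref{thm: main thm about approximation properties and amenability of Fell bundles} holds for $\cB$, and therefore $\cB$ is \Cstar-amenable (condition \eqref{item: B has 1 pap} of the same theorem, by the equivalence proved there).

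There is no real obstacle here: every step is a direct invocation of a previously established result, and the argument is essentially bookkeeping. The only point worth a brief sentence in the write-up is the trivial passage from uniform to $L^1$ convergence on compact slices (already mentioned in the text preceding Definition~\ref{def:approx-properties}), which legitimizes the middle implication. Thus the corollary is obtained without any new computation.
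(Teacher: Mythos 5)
Your proposal is correct and follows exactly the same route as the paper's proof: \Cstar-amenability $\Rightarrow$ BCAP via Proposition~\ref{prop: some easy properties}(1), the trivial passage from BCAP to BCAP$^1$ noted after Definition~\ref{defi: BCAP}, and then BCAP$^1$ $\Rightarrow$ \Cstar-amenability by combining Proposition~\ref{prop: BCAP and diagonal tensor product} with Proposition~\ref{prop: some easy properties}(2) to verify condition \eqref{item: diagonal product has wcp} of Theorem~\ref{thm: main thm about approximation properties and amenability of Fell bundles}. No gaps.
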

\begin{proof}
    By Proposition~\ref{prop: some easy properties}, \Cstar-amenability implies the BCAP and right after Definition~\ref{defi: BCAP}  we noticed that BCAP implies BCAP$^1$.
    Assume that the Fell bundle $\cB$ over $G$ has the BCAP$^1$.
    Combining Propositions~\ref{prop: some easy properties} and~\ref{prop: BCAP and diagonal tensor product} we get that for any other Fell bundle $\cA$ over $G$, $\cA\otmax^d\cB$ has the \wcp,
    Theorem~\ref{thm: main thm about approximation properties and amenability of Fell bundles} implies $\cB$ is \Cstar-amenable.
\end{proof}

\begin{theorem}[Permanence under restriction to closed subgroups]\label{thm: amenability and reductions}
    If the Fell bundle $\cB=\{B_t\}_{t\in G}$ is \Cstar-amenable and $H\sbe G$ is a closed subgroup, then the restricted Fell bundle $\cB_H=\{B_t\}_{t\in H}$ is \Cstar-amenable.
\end{theorem}
\begin{proof}
    Let $\{\Phi_i\colon \cB\to \cB\}_{i\in I}$ be a net of ccp maps witnessing the BCAP of $\cB$.
    Notice that $\Phi_i(\cB_H)\sbe \cB_H$ and that for all $n\in \bN$ and $t\in H^n,$ $\bM_t(\cB)=\bM_t(\cB_H)$.
    By  \cite[II 14.8]{FlDr88}, $\{f|_H\colon f\in C_c(\cB)\}=C_c(\cB_H)$.
    Then $\{\cB_H\to \cB_H, b\mapsto \Phi_i(b)\}_{i\in I}$ is a net witnessing the BCAP of $\cB_H$.
\end{proof}

\begin{corollary}
    If $\alpha =(\{A_t\}_{t\in G},\{\alpha_t\}_{t\in G})$ is a \Cstar-amenable \Cstar-partial action and $H\sbe G$ is a closed subgroup, then $\alpha|_H:=(\{A_t\}_{t\in H},\{\alpha_t\}_{t\in H})$ is a \Cstar-amenable \Cstar-partial action.
\end{corollary}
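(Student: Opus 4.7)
The plan is to reduce this to the Fell-bundle statement in Corollary~\ref{cor: amenability and reductions} via the dictionary between \Cstar-partial actions and their semidirect product bundles. By Definition~\ref{def:FellBundleAmenable}, saying that $\alpha$ is \Cstar-amenable means precisely that the semidirect product Fell bundle $\cB_\alpha=\{A_t\delta_t\}_{t\in G}$ is \Cstar-amenable, and an analogous reformulation applies to $\alpha|_H$. So everything boils down to producing a canonical identification of the Fell bundle $\cB_{\alpha|_H}$ with the reduction $(\cB_\alpha)_H$.

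First I would unpack the construction of the semidirect product bundle associated to a \Cstar-partial action (as in \cite{Ab03}): its fiber over $t$ is $A_t\delta_t$, the multiplication is given by $(a\delta_r)(b\delta_s)=\alpha_r(\alpha_{r^{-1}}(a)b)\delta_{rs}$ when $a\in A_r$ and $b\in A_s$ (with the appropriate ideal compatibility), the involution is $(a\delta_r)^*=\alpha_{r^{-1}}(a^*)\delta_{r^{-1}}$, and the topology is determined by the continuous sections coming from the domains $A_t$. Since $\alpha|_H$ is defined by merely restricting the indexing set from $G$ to $H$ while keeping the same fibers $A_t$ and the same maps $\alpha_t$, its semidirect product bundle $\cB_{\alpha|_H}$ has fibers $A_t\delta_t$ for $t\in H$, and the same algebraic and topological structure as the reduction $(\cB_\alpha)_H=\{A_t\delta_t\}_{t\in H}$. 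This identification is routine but should be recorded as the substantive step.

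With that identification in hand, the rest is immediate: by hypothesis $\cB_\alpha$ is \Cstar-amenable, so Corollary~\ref{cor: amenability and reductions} gives that $(\cB_\alpha)_H$ is \Cstar-amenable, hence so is $\cB_{\alpha|_H}$, and Definition~\ref{def:FellBundleAmenable} translates this back into \Cstar-amenability of $\alpha|_H$.

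The only real point to be careful about is the identification $\cB_{\alpha|_H}\cong(\cB_\alpha)_H$, and in particular the verification that the bundle topology of $\cB_{\alpha|_H}$ agrees with the subspace topology inherited from $\cB_\alpha$; this is the only step where something beyond bookkeeping is used, and it follows from the description of continuous sections of the total bundle restricted to the closed subset $H\subseteq G$, combined with \cite[II~14.8]{FlDr88} (which was already invoked in the proof of Corollary~\ref{cor: amenability and reductions}). Once that is in place, the proposition is a one-line consequence of the Fell-bundle version.
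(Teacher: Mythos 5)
Your proposal is correct and follows exactly the paper's own argument: the proof there consists precisely of the identification $(\cB_\alpha)_H=\cB_{\alpha|_H}$ followed by an appeal to Corollary~\ref{cor: amenability and reductions}. Your additional care about verifying that the bundle topologies agree under this identification is a reasonable elaboration of a step the paper treats as immediate, but it does not change the route.
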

\begin{proof}
    If $\cB_\alpha$ denotes the semidirect product bundle of $\alpha$, then $(\cB_\alpha)_H = \cB_{\alpha|_H}$ and the rest follows from the previous Theorem.
\end{proof} 

Conditional expectations for Fell bundles over discrete groups were defined by Exel \cite{ExlibroAMS}.
Recently, in \cite{abadie2024conditional}, F. Abaide extended this notion to to Fell bundles over locally compact groups and proved that all conditional expectations are completely positive and contractive.
The following result implies that if $E\colon \cB\to \cA$ is a conditional expectation and $\cB$ has the approximation property, then $\cA$ has the approximation property.

\begin{corollary}\label{cor: quasi cond expectations}
    If $\cA=\{A_t\}_{t\in G}$ is a Fell subbundle of $\cB=\{B_t\}_{t\in G}$, $\cB$ is \Cstar-amenable and there exists net of ccp maps $\{\Phi_i\colon \cB\to \cA\}_{i\in I}$ such that $\{\Phi_i|_\cA\}_{i\in I}$ converges to $\id_\cA$ on compact slices, then $\cA$ is \Cstar-amenable.
\end{corollary}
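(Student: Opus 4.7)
The plan is to show that $\cA$ has the BCAP; by Corollary~\ref{cor:another characterization of amenability} this will imply that $\cA$ is \Cstar-amenable. Since $\cB$ is assumed \Cstar-amenable, the same corollary furnishes a net $\{\Psi_j\colon\cB\to\cB\}_{j\in J}$ of ccp maps with compact support converging uniformly on compact slices to $\id_\cB$. The strategy is to combine the two nets $\{\Phi_i\}_{i\in I}$ and $\{\Psi_j\}_{j\in J}$ by composition into a single witness for the BCAP of $\cA$.

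Concretely, I would set $\Theta_{i,j}:=\Phi_i\circ(\Psi_j|_\cA)\colon\cA\to\cA$, where $(i,j)$ ranges over the product directed set $I\times J$. Since $\cA$ is a Fell subbundle of $\cB$, the restriction $\Psi_j|_\cA$ is a ccp map from $\cA$ to $\cB$ (matrix positivity in $\bM_t(\cA)\subseteq\bM_t(\cB)$ is preserved by $\Psi_j^t$). Composing with $\Phi_i\colon\cB\to\cA$ gives a cp map $\Theta_{i,j}\colon\cA\to\cA$ whose matrix amplifications factor as $\Phi_i^t\circ\Psi_j^t$, and the norm on $A_e$ is bounded by $\|\Phi_i|_{B_e}\|\,\|\Psi_j|_{B_e}\|\leq 1$. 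Since $\Psi_j$ vanishes off the compact set $\supp(\Psi_j)$, so does $\Theta_{i,j}$, which therefore has compact support contained in $\supp(\Psi_j)$.

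To check convergence on compact slices, given $f\in C_c(\cA)\subseteq C_c(\cB)$ I would use the telescoping bound
\[
\|\Theta_{i,j}\circ f-f\|_\infty\leq\|\Psi_j\circ f-f\|_\infty+\|\Phi_i\circ f-f\|_\infty,
\]
where the first summand arises from applying the fiberwise contractivity $\|\Phi_i(b)\|\leq\|\Phi_i|_{B_e}\|\,\|b\|\leq\|b\|$ (valid for cp maps between bundles) to the difference $\Psi_j(f(t))-f(t)\in B_t$. Both terms tend to zero along their respective indices by the BCAP of $\cB$ and by the hypothesis on $\Phi_i|_\cA$, so the combined net converges to $\id_\cA$ uniformly on the slice $f$. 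Corollary~\ref{cor:another characterization of amenability} then gives the conclusion.

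The only technical point that requires careful verification is that the composition of two cp maps between Fell bundles is again cp; this reduces to checking continuity of $\Theta_{i,j}$ on the whole bundle and compatibility of the matrix positivity on each $\bM_t(\cA)$, both of which follow from the corresponding properties of $\Phi_i$ and $\Psi_j|_\cA$. The remaining arguments (the product-directed-set combination and the use of fiberwise contractivity) are routine.
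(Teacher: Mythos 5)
Your proposal is correct and follows essentially the same route as the paper: the paper also forms the composition $\Phi_i\circ(\Psi|_{\cA})$ with a compactly supported ccp witness $\Psi$ of the BCAP of $\cB$, uses the same telescoping estimate $\|\Phi_i(\Psi\circ f)-f\|_\infty\leq\|\Psi\circ f-f\|_\infty+\|\Phi_i\circ f-f\|_\infty$ via fiberwise contractivity, and notes $\supp\subseteq\supp(\Psi)$. The only (immaterial) difference is that you index by the product directed set $I\times J$ while the paper phrases the BCAP verification in terms of finite subsets of $C_c(\cA)$ and $\varepsilon>0$.
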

\begin{proof}
    It suffices to show that given $F\sbe C_c(\cA)$ finite and $\varepsilon>0$, there exists a ccp map $\Psi\colon \cA\to \cA$ with compact support such that for all $f\in F$, $\|\Phi\circ f-f\|_\infty <\varepsilon$.
    Fix $F$ and $\varepsilon$.
    There exists a ccp map with compact support $\Psi\colon \cB\to \cB$ such that $\forall\ f\in F$, $\|\Psi\circ f-f\|_\infty <\varepsilon/2.$
    There is also an $i\in I$ such that for all $f\in F$, $\|\Phi_i\circ f - f\|_\infty<\varepsilon/2$.
    Notice that $\Psi_i:=\Phi_i\circ \Psi|_{\cA}\colon \cA\to \cA$ is ccp and for all $f\in F$,
    \[ \| \Psi_i \circ f - f \|_\infty \leq \| \Phi_i\circ (\Psi\circ f - f) \| + \|\Phi_i\circ f - f\|_\infty \leq \| \Psi\circ f-f \|_\infty + \|\Phi_i\circ f - f\|_\infty<\varepsilon.\]
    Moreover, $\supp(\Psi_i)\sbe \supp(\Psi)$ is compact.
\end{proof}

\begin{theorem}[Permanence for invariant \Cstar-subalgebras]\label{thm: permanence invariant subalgebras}
    If $\alpha$ is a \Cstar-amenable action of $G$ on $A$, $B\sbe A$ is an invariant \Cstar-subalgebra and any of the conditions below holds, then $\alpha|_B$ is \Cstar-amenable.
    \begin{enumerate}
        \item There exists an equivariant conditional expectation $E\colon A\to B$ or, more generally, there exists a net of ccp equivariant maps $\{E_i\colon A\to B\}_{i\in I}$ such that $\{E_i|_B\}_{i\in I}$ converges pointwise in norm to $\id_B$.
        \item $B$ is hereditary in $A$.
    \end{enumerate}
\end{theorem}
\begin{proof}
    Let $\cA$ and $\cB$ be the semidirect product bundles of $\alpha$ and $\beta$, respectively.
    Define for each $i\in I$, $\Phi_i\colon \cA\to \cB$  by $\Phi_i(a\delta_t)=E_i(a)\delta_t$.
    Corollary~\ref{cor: equivariant cp map gives cp map} implies that $\Phi_i$ is ccp and because $\|E_i\|\leq 1$ for all $i\in I$, $\{E_i\}_{i\in I}$ converges to $\id_B$ uniformly on compact subsets of $B$.
    Let $\Gamma\sbe C_c(\cB)$ be the set of sections of the form $t\mapsto g(t)b\delta_t$ with $g\in C_c(G)$ and $b\in B$.
    For all $f\in \Gamma$, $\lim_i \|\Phi_i\circ f - f\|_\infty = 0$.
    Using Lemma~\ref{lemma: convergence on gamma suffices} we get that $\{\Phi_i|_\cB\}_{i\in I}$ converges uniformly to $\id_\cB$ on compact slices.
    Corollary~\ref{cor: quasi cond expectations} implies $\cB$ is amenable.
    The claim for the case where $B$ is hereditary in $A$ follows from Lemma~\ref{lemma: symmetric nets}.
\end{proof}

\section{Applications II: Permanence under Reductions and Quotient Bundles}\label{sec: reductions and quotients}

In the classical setting of group actions, permanence properties of amenability (such as restriction to subgroups or passage to quotients by normal subgroups) often rely heavily on spatial arguments, existence of conditional expectations, or exactness assumptions. For general Fell bundles, these traditional techniques break down entirely, particularly because the unit fiber $B_e$ need not be nuclear and exactness is not guaranteed.

In this section, we overcome these obstructions by deploying the BCAP developed in Section~\ref{sec: the BCAP} alongside integration of *-representations and regional topology techniques \cite{FlDr88}. We fix a Fell bundle $\cB=\{B_t\}_{t\in G}$ over a group $G$ and a closed subgroup $H\sbe G$. Our goal is to systematically study when the \wcp, the \Cstar-amenability, and the nuclearity of cross-sectional \Cstar-algebras pass from $\cB$ to its reduction $\cB_H:=\{B_t\}_{t\in H}$, and, when $H$ is normal, to the quotient bundles over $G/H$.

\subsection{Reductions to open subgroups and full cross-sectional \Cstar-algebras}
Assume $H$ is open in $G$ and think $C_c(\cB_H)\sbe C_c(\cB)$ by extending each $f\in C_c(\cB_H)$ as $0$ outside $H.$
This gives $L^1(\cB_H)\sbe L^1(\cB)$ and, passing to enveloping \Cstar-algebras, we get a *-homomorphism $\iota\colon C^*(\cB_H)\to C^*(\cB)$.
To prove $\iota$ is injective we take a nondegenerate *-representation $T\colon C^*(\cB_H)\to \bB(X)$ with faithful integrated form $\tilde{T}\colon C^*(\cB_H)\to \bB(X).$
By \cite[Theorem 2.1]{Ferraro_2024}, $T$ is $\cB-$positive and it can be induced to a *-representation $S\colon \cB\to \bB(Y)$ by the process of \cite[Ch. XI]{FlDr88}.
Since $G/H$ is discrete, we can use \cite[XI 14.21]{FlDr88} to deduce that $T$ is a subrepresentation of $S|_{\cB_H}$.
Consequently, $\tilde{T}|_{C_c(\cB_H)}$ is a subrepresentation of $\tilde{S}\circ \iota|_{C_c(\cB_H)}$ and this implies that $\iota$ is an isometry because, for all $f\in C_c(\cB_H),$ $\|\iota(f)\|\leq \|f\| = \|\tilde{T}_f\|\leq \|\tilde{S}_{\iota(f)}\|\leq \|\iota(f)\|.$
From now on we think $C^*(\cB_H)\sbe C^*(\cB)$ and omit any mention of $\iota$.

Notice that for all $f,g\in C_c(\cB_H)$ and $h\in C_c(\cB)$  we have $f*h*g = f* h|_H * g.$
Since $C_c(\cB_H)$ contains an approximate unit of $C^*(\cB_H),$ the restriction map $E_c\colon C_c(\cB)\to C_c(\cB_H),$ $E_c(h):= h|_H,$ is a contraction with respect to the \Cstar-norm.
By Tomiyama's Theorem \cite[Theorem 1.5.10]{BrownOzawa}, the continuous extension $E\colon C^*(\cB)\to C^*(\cB_H)$ of $E_c$ is a conditional expectation.

\begin{remark}
    If $C^*(\cB)$ is nuclear, then so is $C^*(\cB_H)$.
\end{remark}

\subsection{Reductions to open subgroups and reduced cross-sectional \Cstar-algebras}
We continue assuming that $H$ is open in $G$ and denote $\lambda^H$ and $\lambda^G$ the respective left regular representations.
To prove that $C_c(\cB_H)\sbe C_c(\cB)$ induces and inclusion of \Cstar-algebras $C^*_\red(\cB_H)\sbe C^*_\red(\cB)$ we take a nondegenerate *-representation $T\colon \cB\to \bB(X)$ with $T|_{B_e}$ faithful.
By \cite[Corollary 2.15]{ExNg}, there exists a faithful *-representation $\pi\colon C^*_\red(\cB)\to \bB(L^2(G,X))$ such that, for all $f\in C_c(\cB),$ $\pi(f)=[\lambda^G\tilde{\otimes}T]_f.$
For all $f\in C_c(\cB_H)\sbe C_c(\cB),$ $\pi(f) = [\lambda^G|_H\tilde{\otimes}T|_{\cB_H}]_f$ and $\lambda^G|_H$ is a direct sum of copies of $\lambda^H$ because $G$ is the disjoint union of open cosets $Ht$.
Then $\|f\|_{C^*_\red(\cB)}=\|[\lambda^G\tilde{\otimes}T]_f\|=\|[\lambda^H\tilde{\otimes}T|_{\cB_H}]_f\|=\|f\|_{C^*_\red(\cB_H)},$ which implies that we may identify $C^*_\red(\cB_H)$ with the closure of $C_c(\cB_H)$ in $C^*_\red(\cB)$.

Let $P\in \bB(L^2(G,X))$ be the orthogonal projection onto $L^2(H,X).$
For all $f\in C_c(\cB)$ we have $P(\lambda^G\tilde{\otimes }T)_fP=P(\lambda^G\tilde{\otimes }T)_{f|_H}P$ and
\begin{equation*}
    \|f|_H\|_{C^*_\red(\cB_H)} =\|(\lambda^H\tilde{\otimes}T|_{\cB_H})_{f|_H}\|= \|P(\lambda^G\tilde{\otimes }T)_{f|_H}P\|\leq \|(\lambda^G\tilde{\otimes }T)_{f}\|=\|f\|_{C^*_\red(\cB)}.
\end{equation*}
Then the continuous extension $ E_\red\colon C^*_\red(\cB)\to C^*_\red(\cB_H)$ of $E_c$ is a conditional expectation.

\begin{remark}
    If $C^*_\red(\cB)$ is nuclear, then so it is $C^*_\red(\cB_H)$.
\end{remark}

\begin{remark}
    If $\cB$ has the \wcp, then $\cB_H$ has the \wcp\ because the closure of $C_c(\cB_H)$ in $C^*(\cB)$ is canonically isomorphic to the closure of $C_c(\cB_H)$ in $C^*_\red(\cB).$
\end{remark}

\subsection{The universal completion of the partial cross-sectional bundle}\label{ssec: full completion of partial cross}
Assume that $H$ is a closed and normal subgroup of $G$.
 Let $\cL=\{L^1(\cB_u)\}_{u\in G/H}$ be the partial cross-sectional bundle over $G/H$ derived from $\cB$, and $\qb{\cB}{H}=\{\fqb{B}{u}\}_{u\in G/H}$ the bundle \Cstar-completion of $\cL$, see \cite[VIII 6 \& VIII 16.7]{FlDr88}.
    By construction, each fiber $L^1(\cB_u)$ is the $\|\ \|_1-$completion of $C_c(\cB_u)$, where $\cB_u=\{B_t\}_{t\in u}$ is the reduction of $\cB$ to the coset $u\in G/H$.
    The natural map $C_c(\cB)\to C_c(\cL),$ $f\mapsto \tilde{f}$ with $\tilde{f}(u)=f|_u$, extends to a *-isomorphim $\Phi\colon L^1(\cB)\to L^1(\cL)$ (see \cite[VIII 6.7]{FlDr88}).
    The cross-sectional \Cstar-algebras of $\cB$ and $\cL$ are the envelping \Cstar-algebras of $L^1(\cB)$ and $L^1(\cL)$, so we may think $C^*(\cB)=C^*(\cL)$.
    In addition, the map $\rho\colon \cL\to \qb{\cB}{H}$ of \cite[VIII 17.7]{FlDr88} is a dense embedding in the sense of \cite[XI 12.6]{FlDr88}, so $\{\rho\circ f\colon f\in C_c(\cL)\}$ is dense in $C_c(\qb{\cB}{H})$ with respect to the inductive limit topology (use \cite[II 14.6]{FlDr88}).
    The construction of the bundle \Cstar-completion implies that all *-representations of $\cL$ have the form $T\circ\rho$ for a *-representation $T$ of $\qb{\cB}{H}.$
    Then the continuous *-homomorphism $L^1(\cL)\to L^1(\qb{\cB}{H})$ mapping $f\in C_c(\cL)$ to $\rho\circ f$ induces an isomorphism between $C^*(\cL)$ and $C^*(\qb{\cB}{H})$.
    This yields $C^*(\cB)=C^*(\qb{\cB}{H})$ or, more precisely, there exists an isomorphism 
    \begin{equation}\label{equ: iso full full}
     \Omega\colon C^*(\cB)\to C^*(\qb{\cB}{H})  \textrm{ such that }\forall f\in C_c(\cB),\ \Omega(f)=\rho \circ \tilde{f}.
    \end{equation}

    We claim that the fiber $\fqb{B}{H}$ of $\qb{\cB}{H}$ over the unit $H$ of $G/H$ is $C^*(\cB_H)$.
    For the bundle $\cL$, \cite[XI 11.6]{FlDr88} says that $C^*(\cB_H)=\fqb{B}{H}$ if and only if every *-representation of $L^1(\cB_H)$ is $\cL$-positive.
    Using \cite[XI 12.7]{FlDr88} we get that this last claim is equivalent to say that every *-representation of $\cB_H$ is $\cB$-positive, which is the case \cite[Theorem 2.1]{Ferraro_2024}. 

\subsection{The reduced completion of the partial cross-sectional bundle}\label{ssec: reduced completion of partial cross}
We want to construct a reduced version $\qbr{\cB}{H}$ of $\qb{\cB}{H}$ so that $C^*_\red(\cB)$ is isomophic to $C^*_\red(\qbr{\cB}{H})$ ($H$ is normal and closed in $G$).
To do this we take a nondegenerate *-representation $T\colon \cB\to \bB(X)$ with $\phi:=T|_{B_e}$ faithful, and let $S\colon \cL\to \bB(Y)$ be the *-representation corresponding to  $\Ind_{B_e\uparrow \cB}(\phi)$ via the process of \cite[VIII 15.9]{FlDr88}.
Using $S$ and the construction of \cite[VIII 16.7]{FlDr88} we can produce a Fell bundle $\qbr{\cB}{H}=\{\fqbr{B}{u}\}_{u\in G/H}$ and a dense embedding  $\rho^\red\colon \cL\to \qbr{\cB}{H}$ such that, for all $f\in \cL,$ $\|\rho^\red(f)\|=\|S_f \|$.
By construction, for all $r\in G$; $f\in C_c(\cB_{rH})$ and $\xi\in Y,$ $S_f\xi = \int_N \Ind_{B_e\uparrow\cB}(\phi)_{f(rt)}\xi dt$.
Notice that we may replace $f(rt)$ with $\tilde{f}(rN)(rt)$.
This ultimately implies that $\tilde{S}\circ \Phi|_{C_c(\cB)} = \intInd_{B_e\uparrow\cB}(\phi)|_{C_c(\cB)}$.
    
The fiber $\fqbr{B}{H}$ of $\qbr{\cB}{H}$ over the unit $H$ of $G/H$ is the completion of $L^1(\cB_H)$ with respect to the seminorm $f\mapsto \|S_f\|.$
    Hence, to prove $\fqbr{B}{H}=C^*_\red(\cB_H)$ it suffices to show that for all $f\in C_c(\cB_H)$ we have $\|f\|_{C^*_\red(\cB_H)}=\|S_f\|.$
    By Fell's absorption principle \cite{ExNg,Ferraro_2024}, $\Ind_{B_e\uparrow\cB}(\phi)\equiv \Ind_{B_e\uparrow\cB}(T|_{B_e})$ is weakly equivalent ($\sim$) to $\lambda^G\otimes T.$
     The process of restricting *-representations is continuous with respect to the regional topology \cite[VIII 21.20]{FlDr88} and, by definition, it is a homeomorphism with respect to the operation of taking integrated forms.
    This implies
    \begin{equation*}
     S|_{L^1(\cB_H)}\sim [(\lambda^G\otimes T)|_{\cB_H}]^{\tilde{\ }}=[(\lambda^G|_{H})\otimes (T|_{\cB_H})]^{\tilde{\ }}\sim \lambda^H\tilde{\otimes} (T|_{\cB_H})
    \end{equation*}
    where the last equivalence holds because  $\lambda^G|_H\sim \lambda^H$ and because the process of taking tensor products with representations of the base group is continuous with respect to the regional topology.
    By \cite[Corollary 2.15]{ExNg}, for all $f\in L^1(\cB_H)$ we have $\|S_f\|=\|\lambda^H\tilde{\otimes} (T|_{\cB_H}) (f) \|=\|f\|_{C^*_\red(\cB_H)}.$
    Then $\fqbr{B}{H} = C^*_\red(\cB_H)$.

    Now we want to prove that $C^*_\red(\cB)=C^*_\red(\qbr{\cB}{H}).$
    We know $\lambda^G\tilde{\otimes}T \sim \Ind_{B_e\uparrow\cB}(\phi),$ so there exists a faithful *-representation 
    \begin{align*}
        \mu&\colon C^*_\red(\cB)\to \overline{\intInd_{B_e\uparrow\cB}(\phi)(C_c(\cB))}
    \end{align*}
    such that $\mu(f)=\intInd_{B_e\uparrow\cB}(\phi)(f)=\tilde{S}(\tilde{f}),$ for all $f\in C_c(\cB).$
    
    By induction in stages, 
    \begin{equation*}
     \Ind_{B_e\uparrow \cB}(\phi) = \Ind_{\cB_H\uparrow \cB}(\Ind_{B_e\uparrow \cB_H}(T|_{B_e})).   
    \end{equation*}
    The integrated form of $\Ind_{B_e\uparrow \cB_H}(T|_{B_e})\sim \lambda^H\otimes (T|_{\cB_H})$ is a *-representation $R$ of $L^1(\cB_H)$ for which there exists a faithful *-representation $R'$ of $\fqbr{B}{H}=C^*_\red(\cB_H)$ such that $R'\circ \rho|_{L^1(\cB_H)}=R,$ see \cite[Corollary 2.15]{ExNg}.
    By \cite[XI 12.6-12.7]{FlDr88}, $S=\Ind_{L^1(\cB_H)\uparrow\cL}(R)$ and $\Ind_{\fqbr{B}{H}\uparrow\qbr{\cB}{H}}(R')\circ \rho^\red = \Ind_{L^1(\cB_H)\uparrow\cL}(R).$
    From Proposition 3.1 of \cite{Ferraro_2024} we get the existence of a faithful *-representation 
    \begin{equation*}
        \nu\colon C^*_\red(\qbr{\cB}{H})\to \overline{\intInd_{\fqbr{B}{H}\uparrow\qbr{\cB}{H}}(R)(C_c(\qbr{\cB}{H}))}
    \end{equation*}
    such that $\nu(f)=\intInd_{\fqbr{B}{H}\uparrow\qbr{\cB}{H}}(R')(f),$ for all $f\in C_c(\qbr{\cB}{H}).$

    For all $f\in C_c(\cB)$ we have 
    \begin{align*}
        \mu(f) =\tilde{S}(\tilde{f})
         = \intInd_{L^1(\cB_H)\uparrow\cL}(R)(\tilde{f})
         = \intInd_{\fqbr{B}{H}\uparrow\qbr{\cB}{H}}(R')(\rho^\red\circ \tilde{f})
         =\nu( \rho^\red\circ \tilde{f} ).
    \end{align*}
    The function $\Omega^\red\colon C_c(\cB)\to C_c(\qbr{\cB}{H}), $ $f\mapsto \rho^\red\circ \tilde{f},$ is a *-homomorphism because $\nu\circ \Omega^\red=\mu$.
    Using \cite[II 14.6]{FlDr88} one can show that the image of $\Omega^\red$ is dense with respect to the inductive limit topology, so $\Omega^\red(C_c(\cB))$ is dense in $C^*(\qbr{\cB}{H})$ and  $\Omega^\red$ extends to an isomorphism
    \begin{equation}\label{equ: iso red red}
        \Omega^\red\colon C^*_\red(\cB)\to C^*_\red(\qbr{\cB}{H}).
    \end{equation}

    \begin{notation}
        The fiber over $u\in G/H$ of $\qbr{\cB}{H}$ will be denoted $C^*_\red(\cB_u).$
    \end{notation}
    
\subsection{Weak containment, nuclearity and amenability}\label{sec:wcp-nuc-amen}
We go back to our original assumption of $H$ being closed in $G$ (it is not normal anymore, unless specifically stated).

\begin{theorem}
    If the normaliser $N$ of $H$ in $G$ is open (e.g. $G/H$ is discrete), then the following claims are true.
    \begin{enumerate}
        \item If $\cB$ has the \wcp, then $\cB_H$ has the \wcp.
        \item If $C^*(\cB)$ is nuclear, then $C^*(\cB_H)$ is nuclear.
        \item If $C^*_\red(\cB)$ is nuclear, then $C^*_\red(\cB_H)$ is nuclear.
    \end{enumerate}
\end{theorem}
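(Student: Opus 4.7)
My plan is to factor the inclusion $H\subseteq G$ through the normalizer $N$: since $H$ is closed and normal in $N$, and $N$ is open in $G$, the open-subgroup results already established in the previous two subsections and the partial cross-sectional isomorphisms of Subsections~\ref{ssec: full completion of partial cross} and~\ref{ssec: reduced completion of partial cross} can be chained together. First, applying the preceding two subsections with $N$ in place of $H$, the restriction $f\mapsto f|_N$ yields conditional expectations $E_N\colon C^*(\cB)\to C^*(\cB_N)$ and $E^\red_N\colon C^*_\red(\cB)\to C^*_\red(\cB_N)$; this step alone shows that nuclearity of $C^*(\cB)$ or of $C^*_\red(\cB)$, as well as the \wcp\ of $\cB$, all transfer to $\cB_N$. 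Second, applying the constructions of Subsections~\ref{ssec: full completion of partial cross} and~\ref{ssec: reduced completion of partial cross} to $\cB_N$ with respect to $H\trianglelefteq N$, we obtain isomorphisms $C^*(\cB_N)\cong C^*(\qb{\cB_N}{H})$ and $C^*_\red(\cB_N)\cong C^*_\red(\qbr{\cB_N}{H})$ that identify the unit fibers with $C^*(\cB_H)$ and $C^*_\red(\cB_H)$, respectively.

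Combining these two steps and invoking \cite{Ferraro_2024} to manufacture a conditional expectation from the cross-sectional algebra of the partial cross-sectional bundle onto its unit fiber, I would assemble conditional expectations $E\colon C^*(\cB)\to C^*(\cB_H)$ and $E^\red\colon C^*_\red(\cB)\to C^*_\red(\cB_H)$, with $E^\red$ faithful, sitting in a commutative square with the canonical quotients $\lambda^\cB$ and $\lambda^{\cB_H}$. Once this diagram is in place, the three statements follow by routine arguments. For (2) and (3), nuclearity passes along the conditional expectations $E$ and $E^\red$ onto the $C^*$-subalgebras $C^*(\cB_H)$ and $C^*_\red(\cB_H)$. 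For (1), suppose $\cB$ has the \wcp, so that $\lambda^\cB$ is an isomorphism, and pick $y\in C^*(\cB_H)$ with $\lambda^{\cB_H}(y)=0$; using $y=E(y)$ and applying the commutative square to $y^*y$ gives $E^\red(\lambda^\cB(y^*y))=0$, whence the faithfulness of $E^\red$ together with the invertibility of $\lambda^\cB$ force $y^*y=0$ and hence $y=0$, showing $\lambda^{\cB_H}$ injective.

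The main technical obstacle is the second half of the second step: constructing the conditional expectation from the cross-sectional algebra of $\qb{\cB_N}{H}$ onto its unit fiber $C^*(\cB_H)$, and verifying that under $\Omega$ and $\Omega^\red$ the canonical quotient $\lambda^{\cB_N}$ genuinely restricts to $\lambda^{\cB_H}$ on the identified unit fibers. Over a discrete quotient $N/H$ this is elementary (read off the coefficient at the identity), but for a general locally compact $N/H$ it needs the dilation and induction machinery from \cite{Ferraro_2024}; in particular, the faithfulness of $E^\red$ relies on Fell's absorption principle exactly in the form used in Subsection~\ref{ssec: reduced completion of partial cross} to identify $C^*_\red(\qbr{\cB_N}{H})$ with $C^*_\red(\cB_N)$. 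Once these compatibility and faithfulness statements are secured, the remainder of the proof is the routine assembly outlined above.
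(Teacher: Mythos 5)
Your first step --- factoring through the normalizer $N$, using the open-subgroup conditional expectations $C^*(\cB)\to C^*(\cB_N)$ and $C^*_\red(\cB)\to C^*_\red(\cB_N)$ to reduce to the case $N=G$, and then invoking the identifications $C^*(\cB_N)\cong C^*(\qb{\cB_N}{H})$ and $C^*_\red(\cB_N)\cong C^*_\red(\qbr{\cB_N}{H})$ with unit fibers $C^*(\cB_H)$ and $C^*_\red(\cB_H)$ --- is exactly the paper's strategy. The gap is in your final step. You propose to finish by building a conditional expectation from the cross-sectional algebra of $\qb{\cB_N}{H}$ (respectively $\qbr{\cB_N}{H}$) onto its unit fiber. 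But after the reduction the quotient group $N/H$ is an arbitrary locally compact group, and for a Fell bundle over a non-discrete group the unit fiber is not a \Cstar-subalgebra of the cross-sectional algebra at all (it only maps into the multiplier algebra): already for the trivial bundle over $\R$ the unit fiber is $\C$ while $C^*(\R)\cong C_0(\R)$ is non-unital, so no conditional expectation onto the unit fiber can exist. The obstacle you flag as ``technical'' and defer to the dilation/induction machinery of \cite{Ferraro_2024} is therefore not surmountable by that route; your argument for claim (1) collapses for the same reason, since it runs through the same expectations and a faithfulness claim for a map that does not exist.

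The paper closes the argument differently. For claim (1) it simply cites \cite[Corollary 5.6]{Ferraro_2024}. For claims (2) and (3) it applies Theorem~\ref{thm: reduced algebra nuclear implies nuclear fiber}(1) --- nuclearity of the \emph{reduced} cross-sectional algebra of a Fell bundle implies nuclearity of its unit fiber --- to the bundles $\qb{\cB}{H}$ and $\qbr{\cB}{H}$ over $G/H$. That theorem is proved not by a conditional expectation but by the Morita equivalence of $C^*_\red$ with the reduced crossed product of the kernel algebra, the Lau--Paterson result that nuclearity of a reduced crossed product forces nuclearity of the coefficient algebra, and Abadie's identification of the nuclearity of $\bk(\cB)$ with that of $B_e$. (In the full case one first passes from $C^*(\qb{\cB}{H})$ to its quotient $C^*_\red(\qb{\cB}{H})$, since nuclearity passes to quotients.) If you replace your final step with an appeal to that theorem, the rest of your outline goes through; note also that the permanence of nuclearity under conditional expectations, which you do use correctly in the first (open-subgroup) stage, is fine there because $C^*(\cB_N)$ and $C^*_\red(\cB_N)$ genuinely sit inside $C^*(\cB)$ and $C^*_\red(\cB)$ as ranges of conditional expectations.
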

\begin{proof}
    The first claim is \cite[Corollary 5.6]{Ferraro_2024}.
    Since $(\cB_N)_H = \cB_H$ and we have conditional expectations, $E\colon C^*(\cB)\to C^*(\cB_N)$ and $E_\red\colon C^*_\red(\cB)\to C^*_\red(\cB_N)$, we may assume that $N=G$ (i.e. $H$ is normal in $G$).
    We go a step further and use a trick of \cite{FlDr88} to reduce the proof to $H=\{e\}$.

    If $C^*(\cB)=C^*(\qb{\cB}{H})$ is nuclear, then $C^*_\red(\qb{\cB}{H})$ is nuclear and Theorem~\ref{thm: reduced algebra nuclear implies nuclear fiber} implies that the fiber $C^*(\cB_H)$ of $\qb{\cB}{H}$ over $H\in G/H$ is nuclear.
    Replacing $\qb{\cB}{H}$ with $\qbr{\cB}{H}$ and recalling that $C^*_\red(\cB)=C^*_\red(\qbr{\cB}{H})$, a similar argument implies that if $C^*_\red(\cB)$ is nuclear, then the fiber $C^*_\red(\cB_H)$ of $\qbr{\cB}{H}$ is nuclear.
\end{proof}

\begin{corollary}\label{cor: completion nuclear}
If $H$ is normal in $G$, $C^*(\cB_H)$ (resp. $C^*_\red(\cB_H)$) is nuclear and $\qb{\cB}{H}$ (resp. $\qbr{\cB}{H}$) is \Cstar-amenable, then $C^*(\cB)$ (resp. $C^*_\red(\cB)$) is nuclear.
\end{corollary}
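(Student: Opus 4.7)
The plan is to apply Theorem~\ref{thm: B amenable and Be nuclear} to the Fell bundles $\qb{\cB}{H}$ and $\qbr{\cB}{H}$ over $G/H$ constructed in Subsections~\ref{ssec: full completion of partial cross} and~\ref{ssec: reduced completion of partial cross}, and then transport the conclusion back to $C^*(\cB)$ or $C^*_\red(\cB)$ along the isomorphisms $\Omega$ of~\eqref{equ: iso full full} and $\Omega^\red$ of~\eqref{equ: iso red red}. The key observation is that Subsections~\ref{ssec: full completion of partial cross} and~\ref{ssec: reduced completion of partial cross} already identified the unit fibres of these quotient bundles as $C^*(\cB_H)$ and $C^*_\red(\cB_H)$ respectively, so the nuclearity hypotheses translate into nuclearity of the unit fibre of the relevant quotient bundle.

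For the full case, I would first invoke Subsection~\ref{ssec: full completion of partial cross} to recall that the fibre of $\qb{\cB}{H}$ over the identity coset is $C^*(\cB_H)$, which is nuclear by assumption. Since $\qb{\cB}{H}$ is moreover \Cstar-amenable, Theorem~\ref{thm: B amenable and Be nuclear} gives that $C^*(\qb{\cB}{H})$ is nuclear. The isomorphism $\Omega\colon C^*(\cB)\to C^*(\qb{\cB}{H})$ of~\eqref{equ: iso full full} then transfers this back to yield nuclearity of $C^*(\cB)$.

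For the reduced case, by Subsection~\ref{ssec: reduced completion of partial cross} the unit fibre of $\qbr{\cB}{H}$ is $C^*_\red(\cB_H)$, nuclear by hypothesis. Theorem~\ref{thm: B amenable and Be nuclear} applied to the \Cstar-amenable Fell bundle $\qbr{\cB}{H}$ produces nuclearity of $C^*(\qbr{\cB}{H})$. Since \Cstar-amenability implies the \wcp\ (Proposition~\ref{prop: some easy properties}), the canonical surjection $C^*(\qbr{\cB}{H})\to C^*_\red(\qbr{\cB}{H})$ is an isomorphism, whence $C^*_\red(\qbr{\cB}{H})$ is nuclear. Composing with $\Omega^\red\colon C^*_\red(\cB)\to C^*_\red(\qbr{\cB}{H})$ from~\eqref{equ: iso red red} finishes the argument.

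There is no substantial obstacle here: the proof is essentially the act of composing Theorem~\ref{thm: B amenable and Be nuclear} with the two identifications already built in Section~\ref{sec: reductions and quotients}. The only subtlety worth noting is the mild but essential detour in the reduced case, where Theorem~\ref{thm: B amenable and Be nuclear} delivers nuclearity of the \emph{full} cross-sectional algebra of $\qbr{\cB}{H}$, and one uses \Cstar-amenability of $\qbr{\cB}{H}$ (equivalently, its \wcp) precisely to match this with nuclearity of the \emph{reduced} algebra and hence of $C^*_\red(\cB)$.
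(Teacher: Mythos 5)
Your proposal is correct and follows exactly the paper's own argument: identify the unit fibres of $\qb{\cB}{H}$ and $\qbr{\cB}{H}$ with $C^*(\cB_H)$ and $C^*_\red(\cB_H)$, apply Theorem~\ref{thm: B amenable and Be nuclear}, and transport nuclearity back along $\Omega$ and $\Omega^\red$. Your explicit remark in the reduced case --- that \Cstar-amenability of $\qbr{\cB}{H}$ supplies the \wcp\ needed to pass from the full to the reduced cross-sectional algebra --- is a step the paper leaves implicit, so it is a welcome clarification rather than a deviation.
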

\begin{proof}
    The fiber $C^*(\cB_H)$ of $\qb{\cB}{H}$ over the unit $H$ is nuclear and $\qb{\cB}{H}$ is \Cstar-amenable.
    By Theorem~\ref{thm: B amenable and Be nuclear} and \eqref{equ: iso full full}, $C^*(\cB)$ is nuclear.
    On the other hand, $C^*_\red(\cB_H)$ is the fiber of $\qbr{\cB}{H}$ over $H$ and, by \eqref{equ: iso red red}, $C^*_\red(\cB)$ is isomorphic to the nuclear algebra $C^*_\red(\qbr{\cB}{H})$.
\end{proof}

\begin{theorem}
    If $H$ is normal in $G$, then the following claims are equivalent
    \begin{enumerate}
        \item $\cB$ has the \wcp.
        \item $\qb{\cB}{H}$ and $\cB_H$ have the \wcp.
        \item $\qbr{\cB}{H}$ and $\cB_H$ have the \wcp.
    \end{enumerate}
\end{theorem}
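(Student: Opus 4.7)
The plan is to reduce all three equivalences to the single observation that whenever $\cB_H$ has the \wcp, the two Fell bundle completions $\qb{\cB}{H}$ and $\qbr{\cB}{H}$ over $G/H$ in fact coincide. Note that the \wcp\ of $\cB_H$ appears explicitly in (2) and (3), and is also forced by (1) via the preceding theorem applied with normaliser $N=G$. Hence in all three cases I may assume $C^*(\cB_H)=C^*_\red(\cB_H)$ from the outset.

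First I would prove that under this hypothesis $\qb{\cB}{H}=\qbr{\cB}{H}$. Both bundles arise from the same $\cL$ via the bundle \Cstar-completion of \cite[VIII~16.7]{FlDr88}, so their Banach structures are determined by the fiber seminorms. On any fiber $L^1(\cB_u)$, the \Cstar-identity yields
\begin{equation*}
   \|f\|_{\fqb{B}{u}}^2 = \|f^**f\|_{\fqb{B}{H}} = \|f^**f\|_{C^*(\cB_H)},
   \qquad
   \|f\|_{\fqbr{B}{u}}^2 = \|f^**f\|_{\fqbr{B}{H}} = \|f^**f\|_{C^*_\red(\cB_H)},
\end{equation*}
using Subsections~\ref{ssec: full completion of partial cross} and~\ref{ssec: reduced completion of partial cross} to identify the unit fibers. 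The hypothesis $C^*(\cB_H)=C^*_\red(\cB_H)$ therefore makes the two seminorms coincide on every fiber, and the uniqueness built into the bundle \Cstar-completion forces $\qb{\cB}{H}=\qbr{\cB}{H}$ as Fell bundles (with coinciding topologies and dense embeddings $\rho=\rho^\red$). Consequently $C^*(\qb{\cB}{H})=C^*(\qbr{\cB}{H})$ and $C^*_\red(\qb{\cB}{H})=C^*_\red(\qbr{\cB}{H})$.

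Combining this with the isomorphisms \eqref{equ: iso full full} and \eqref{equ: iso red red}, for $f\in C_c(\cB)$ one has $\Omega(f)=\rho\circ \tilde f$ and $\Omega^\red(\lambda^\cB(f))=\rho^\red\circ \tilde f$; under the identification above these are the same element, so the square
\begin{equation*}
\begin{CD}
C^*(\cB) @>{\lambda^\cB}>> C^*_\red(\cB) \\
@V{\Omega}VV @VV{\Omega^\red}V \\
C^*(\qb{\cB}{H}) @>{\lambda^{\qb{\cB}{H}}}>> C^*_\red(\qb{\cB}{H})
\end{CD}
\end{equation*}
commutes on $C_c(\cB)$, hence on all of $C^*(\cB)$. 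Since the vertical arrows are \Cstar-isomorphisms, $\lambda^\cB$ is injective if and only if $\lambda^{\qb{\cB}{H}}$ is; and because $\qb{\cB}{H}=\qbr{\cB}{H}$, this is equivalent to injectivity of $\lambda^{\qbr{\cB}{H}}$. Thus (1), (2) and (3) are simultaneously equivalent.

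The main delicate step I anticipate is the identification $\qb{\cB}{H}=\qbr{\cB}{H}$: equality of fiber seminorms is the essential input, but one must also verify that the Banach bundle topologies and the dense embeddings from $\cL$ coincide. This should follow directly from the universal property of the \Cstar-completion construction of \cite[VIII~16.7]{FlDr88}, and is the step I would write out with most care.
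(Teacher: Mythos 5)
Your proposal is correct and follows essentially the same route as the paper: reduce to the case where $\cB_H$ has the \wcp, deduce $\qb{\cB}{H}=\qbr{\cB}{H}$ by comparing the fiber seminorms via the \Cstar-identity $\|f\|^2=\|f^**f\|$ together with the identifications $\fqb{B}{H}=C^*(\cB_H)$ and $\fqbr{B}{H}=C^*_\red(\cB_H)$, and then conclude with the commutative square built from $\Omega$ and $\Omega^\red$. The paper performs exactly this computation (writing it as $\|S_f\|^2=\|S_{f^**f}\|=\|f^**f\|_{C^*_\red(\cB_H)}=\|f^**f\|_{C^*(\cB_H)}=\|\rho(f)\|^2$) and likewise treats the coincidence of the two bundle completions as immediate from the equality of seminorms in the construction of \cite[VIII~16.7]{FlDr88}.
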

\begin{proof}
    The first condition implies that $\cB_H$ has the \wcp, so we may continue the proof assuming that $\cB_H$ has the \wcp.
    In this case, there is no difference in the processes of constructing $\qb{\cB}{H}$ and $\qbr{\cB}{H}$ out of $\cL$ because, for all $f\in \cL$, we have $\|S_f\|^2= \|S_{f^**f}\|=\|f^**f\|_{C^*_\red(\cB)} = \|f^**f\|_{C^*(\cB_H)}=\|\rho(f)\|^2.$
    Hence, $\qb{\cB}{H}=\qbr{\cB}{H}$ and the last two claims of the statement are equivalent.
    
    Restricting the isomorphisms $\Omega\colon C^*(\cB)\to C^*(\qb{\cB}{H})$ and $\Omega^\red\colon C^*_\red(\cB)\to C^*_\red(\qbr{\cB}{H})$ of \eqref{equ: iso full full} and \eqref{equ: iso red red} to $C_c(\cB)$ we get  $f\mapsto \rho\circ \tilde{f}.$
    Because $\qb{\cB}{H}=\qbr{\cB}{H}$, we have a commutative diagram
    \begin{equation*}
        \xymatrix{ C^*(\cB) \ar[r]^\Omega \ar[d]_{\lambda^\cB} & C^*(\qb{\cB}{H})\ar[d]^{\lambda^{\qb{\cB}{H}}}\\C^*_\red(\cB)\ar[r]_{\Omega^\red} & C^*_\red(\qb{\cB}{H}) }.
    \end{equation*}
    and it becomes clear that $\cB$ has the \wcp\ if and only if $\qb{\cB}{H}$ has the \wcp.   
\end{proof}

The preceding results naturally lead to the question of whether, under the hypotheses of the theorem above, the following three conditions are equivalent:
\begin{enumerate}
    \item\label{item: B amenable in the thm for reductions and quotients} $\cB$ is \Cstar-amenable.
    \item\label{item: BH and quotient amenable} $\cB_H$ and $\qb{\cB}{H}$ are \Cstar-amenable.
    \item\label{item: BH and reduced quotient amenable} $\cB_H$ and $\qbr{\cB}{H}$ are \Cstar-amenable.
\end{enumerate}

The equivalence \eqref{item: BH and quotient amenable} $\iff$ \eqref{item: BH and reduced quotient amenable} is immediate: if $\cB_H$ is \Cstar-amenable, it has the \wcp, which yields the canonical identification $\qb{\cB}{H}=\qbr{\cB}{H}$. We establish the forward implication \eqref{item: B amenable in the thm for reductions and quotients} $\Rightarrow$ \eqref{item: BH and quotient amenable} in Theorem~\ref{thm: permanence quotient bundles} below. 

The reverse implication \eqref{item: BH and quotient amenable} $\Rightarrow$ \eqref{item: B amenable in the thm for reductions and quotients} remains an intriguing open question in full generality. We strongly suspect it holds. As evidence, we note that it is true under additional hypotheses (for example, if $G$ is inner amenable and $B_e$ is nuclear, which follows from applying Corollary~\ref{cor: completion nuclear} twice and Theorem~\ref{thm: reduced algebra nuclear implies nuclear fiber}). Furthermore, we have verified that the converse holds unconditionally when $G$ is discrete; the proof involves distinct techniques and will appear in a forthcoming article.

\begin{theorem}[Permanence for Quotient Bundles]\label{thm: permanence quotient bundles}
    If the Fell bundle $\cB$ over $G$ is \Cstar-amenable and $H$ is a closed normal subgroup of $G$, then the reduction $\cB_H$ and the partial quotient bundles $\qb{\cB}{H}=\qbr{\cB}{H}$ are \Cstar-amenable. 
\end{theorem}
\begin{proof}
    Theorem~\ref{thm: amenability and reductions} implies that $\cB_H$ is \Cstar-amenable.

    Forget about amenability for a moment and take a ccp map $\Phi\colon \cB\to \cB.$
    We claim that there exists a unique ccp map $\qb{\Phi}{H}\colon \qb{\cB}{H}\to \qb{\cB}{H}$ such that for all $u\in G/H$ and $f\in C_c(\cB_u)\sbe L^1(\cB_u)$, $\qb{\Phi}{H}\circ \rho (f)=\rho(\Phi\circ f)$.

    Take a nondegenerate *-representation $T\colon C^*(\cB)\to \bB(X)$ with faithful integrated form.
    Via the isomorphism $L^1(\cL)\cong L^1(\cB)$, $T$ induces a *-representation $T^1\colon \cL\to \bB(X)$, see \cite[VIII 15.9]{FlDr88}, and $T^1$ determines a unique *-representation $\dot{T}\colon \qb{\cB}{H}\to \bB(X)$ such that $\dot{T}\circ \rho =T^1$.
    Moreover, the integrated forms of $\dot{T}$ and $T$ are the same up to the isomorphism of  \eqref{equ: iso full full}.
    Since the integrated form of $\dot{T}$ is faithful, the restrictions of $\dot{T}$ to each one of the fibers is isometric.
    Hence, for all $f\in \uplus_{u\in G/H}C_c(\cB_u)$, $\| \rho(\Phi\circ f) \|= \|\dot{T}_{\Phi\circ f}\|$.

    Take a nondegenerate dilation of $T\circ \Phi$, $S\colon \cB\to \bB(Y)$, and $V\in \bB(X,Y)$ such that $T\circ \Phi(b)=V^* S_bV$ and $\|V\|\leq 1$.
    Using the explicit descriptions $T^1$ and $S^1$ of \cite[VIII 15.9]{FlDr88}, a straightforward computation reveals that for all $f\in \uplus_{u\in G/H}C_c(\cB_u)$, $ T^1_{\Phi\circ f} = V^*S^1_f V = V^* \dot{S}_{\rho(f)}V$ and it follows that $\|\rho(\Phi\circ f)\|\leq \|\rho(f)\|$.
    Since $\rho(C_c(\cB))$ is dense in $\fqb{B}{u}$, there exists a unique family of linear contractions $\{ \fqb{\Phi}{u}\colon \fqb{B}{u}\to \fqb{B}{u} \}_{u\in G/H}$ whose union $\qb{\Phi}{H}\colon \qb{\cB}{H}\to \qb{\cB}{H}$ satisfies $\qb{\Phi}{H}\circ \rho(f)=\rho(\Phi\circ f)$, for all $f\in \uplus_{u\in G/H}C_c(\cB_u)$.
    For all $f\in C_c(\cB)$ we have $\qb{\Phi}{H}\circ (\rho\circ \tilde{f})(u) =  \qb{\Phi}{H}\circ (\rho(f|_u)) = \rho((\Phi\circ f)|_u) = \widetilde{\Phi\circ f}(u)$.
    Considering the family $\Gamma:=\{\rho\circ \tilde{f}\colon f\in C_c(\cB)\}$ and using \cite[II 13.16]{FlDr88} we get that $\qb{\Phi}{H}$ is continuous and it is ccp because $T\circ \qb{\Phi}{H} (z) = V^* \dot{S}_z V$.
    Notice that $\supp(\qb{\Phi}{H})$ is contained in the projection of $\supp(\Phi)$ into $G/H$.

    Let $\{\Phi_i\colon \cB\to \cB\}_{i\in I}$ be a net of ccp maps with compact support converging uniformly to $\id_\cB$ on compact slices.
    Then, $\{\qb{\Phi_i}{H}\colon\qb{\cB}{H}\to \qb{\cB}{H}\}_{i\in I}$ is a net of ccp maps with compact supports.
    To complete the proof we use Lemma~\ref{lemma: convergence on gamma suffices} and the family $\Gamma$ defined above in this proof.
    For all $f\in C_c(\cB)$ and $t\in G$, setting $g:=\rho\circ \tilde{f}$, we get   
    \[
    \|  \qb{\Phi_i}{H}\circ g(tH)-g(tH)\|
        \leq \| T^1_{\Phi_i\circ (f|_{tH}) - f|_{tH} } \|\leq \| \Phi_i\circ (f|_{tH}) - f|_{tH} \|_1.
        \]
    and this yields
    \[ \|  \qb{\Phi_i}{H}\circ g-g\|_1 
        \leq \int_{G/H} \| \qb{\Phi_i}{H}\circ g(tH)-g(tH)\|
         dtH \leq \int_{G/H} \int_H \| \Phi_i(f(ts))-f(ts)\|ds dtH =\|\Phi_i\circ f-f\|_1.  \]
    Thus, $\lim_i \|  \qb{\Phi_i}{H}\circ g-g\|_1 = 0$.
\end{proof}

\appendix 

\section{Tensor products of equivariant cp maps between \Wstar-algebras}\label{sec: appendix tensor product of cp equivariant maps}

Let $\gamma$ and $\delta$ be \Wstar-actions of a group $G$ on $M$ and $N,$ respectively, and $\pi\colon M\to N$ an equivariant ccp map.
Fix unitary implementations $(M\sbe \bB(X),U)$ and $(N\sbe \bB(Y),V)$ and a Hilbert space $Z$.

\begin{remark}[The coefficient function]\label{rmk:ternary operation of coefficients}
    The function $Z\times Z\times \bB(Z\otimes X)\to \bB(X)$ given by $(z,w,T)\mapsto T_{z,w},$ where $\langle T_{z,w}u,v\rangle = \langle T(w\otimes u),z\otimes v\rangle$, is conjugate linear the first variable and linear in the rest.
    Moreover, it is a contraction in the sense that $\|T_{z,w}\|\leq\|z\|\|w\|\|T\|$.
    Hence, $(z,w,T)\mapsto T_{z,w}$ is continuous with respect to the product of the norm topologies.
\end{remark}

Recall that given  any Hilbert base $S$ of $Z$ we have
\[\bB(Z)\bo M=\{T\in \bB(Z\otimes X)\colon \forall \ z,w\in S,\ T_{z,w}\in M\}.\]
Thinking of each $T\in \bB(Z\otimes X)$ as the matrix $(T_{z,w})_{z,w\in S}$ one can construct a function
\[\id \bo_S\pi\colon \bB(Z)\bo M\to \bB(Z)\bo N\]
that in matrix form can be expressed as
\begin{equation}\label{equ: explicit formula por 1 barotimes pi}
  (\id \bo_S\pi)((T_{z,w})_{z,w\in S})=(\pi(T_{z,w}))_{z,w\in S}.  
\end{equation}
In particular, $(\id \bo_S\pi)(T\otimes m)=T\otimes \pi(m)$ for all $T\in \bB(Z)$ and $m\in M$.

To justify the existence of $\id \bo_S\pi$ one can proceed as follows.
For each finite $F\sbe S$ let $P_F\in \bB(Z)$ be the orthogonal projection onto $\spn(F)$.
Since $P_F \bB(Z)P_F$ is a finite dimensional \Cstar-algebra, 
\[(P_F\otimes 1_X)\bB(Z)\bo M(P_F\otimes 1_X)=P_F\bB(Z)P_F\otmin M\]
and there exists a unique ccp map $\id\otimes \pi\colon P_F\bB(Z)P_F\otmin M\to \bB(Z)\bo N$ sending $T\otimes m$ to $T\otimes \pi(m)$.
For all $T\in \bB(Z)\bo M,$ $\|(\id\otimes \pi)( (P_F\otimes 1_X)T(P_F\otimes 1_X) )\|\leq \|T\|$.
Taking $F$ in the directed set of finite subsets of $S$, some subnet of $\{(\id\otimes \pi)( (P_F\otimes 1_X)T(P_F\otimes 1_X) )\}_F$ has a limit in the weak operator topology (\wot).
Expressed in matrix form, that limit must be $(\pi(T_{z,w}))_{z,w\in S}$.
This implies that $\{(\id\otimes \pi)( (P_F\otimes 1_X)T(P_F\otimes 1_X) )\}_F$ $\wot$-converges to $(\pi(T_{z,w}))_{z,w\in S}$ (because all of it subnets have a subnet converging to that operator).
Then we can define 
\[  (\id \bo_S \pi)(T):=\wot\lim_F (\id\otimes \pi)( (P_F\otimes 1_X)T(P_F\otimes 1_X) )\]
and it follows that $(\id \bo_S \pi)$ is ccp.
Thinking in matrix form, it follows that $\pi$ is normal if and only if $\id \bo_S\pi$ is normal.
In addition, $\id\bo_S\pi=\id$  (the identity map of the domain of $\id\bo_S\pi$) if and only if $\pi=\id$.

\begin{lemma}
    If $S_1$ and $S_2$ are Hilbert bases of $Z$, then $\id \bo_{S_1} \pi=\id \bo_{S_2} \pi$.
\end{lemma}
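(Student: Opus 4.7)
The plan is to show that the operator $(\id\bo_S\pi)(T)$ is determined by its ``matrix coefficients'' $((\id\bo_S\pi)(T))_{z,w}$ in the sense of Remark~\ref{rmk:ternary operation of coefficients}, and that these coefficients admit a description which does not involve the chosen basis $S$. More precisely, I aim to establish the basis-independent identity
\[
\bigl((\id\bo_S\pi)(T)\bigr)_{z,w} \;=\; \pi(T_{z,w}) \qquad (z,w \in Z,\ T \in \bB(Z)\bo M).
\]
Because an operator in $\bB(Z\otimes X)$ is determined by its coefficients and the right-hand side is manifestly independent of $S$, this will immediately yield $\id\bo_{S_1}\pi = \id\bo_{S_2}\pi$.

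To prove the identity I would exploit the explicit $\wot$-limit construction of $\id\bo_S\pi$. For each finite $F\sbe S$ set $A_F := (\id\otimes\pi)((P_F\otimes 1_X)T(P_F\otimes 1_X))$. A direct unwinding of the coefficient function, valid for arbitrary $z,w\in Z$ (not merely for $z,w\in \spn F$), gives
\[
\bigl((P_F\otimes 1_X)T(P_F\otimes 1_X)\bigr)_{z,w} \;=\; T_{P_F z,\,P_F w}.
\]
Expanding a general element of the finite-dimensional algebra $P_F\bB(Z)P_F\otmin M$ in the matrix units coming from $F$, and using linearity of $\pi$ together with the sesquilinearity of $(z,w,T)\mapsto T_{z,w}$, one similarly obtains $(A_F)_{z,w}=\pi(T_{P_F z,P_F w})$. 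As $F$ ranges over the directed set of finite subsets of $S$, $P_F\to 1_Z$ strongly, so $P_F z\to z$ and $P_F w\to w$ in norm; Remark~\ref{rmk:ternary operation of coefficients} and the norm continuity of $\pi$ then force $(A_F)_{z,w}\to \pi(T_{z,w})$ in the norm of $\bB(X)$.

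On the other hand, by construction $A_F\to (\id\bo_S\pi)(T)$ in $\wot$, and the coefficient extraction $A\mapsto A_{z,w}$ is $\wot$-to-$\wot$ continuous (from the very definition $\langle A_{z,w}u,v\rangle = \langle A(w\otimes u), z\otimes v\rangle$ with $u,v\in X$). Hence $(A_F)_{z,w}\to ((\id\bo_S\pi)(T))_{z,w}$ in $\wot$, and uniqueness of the limit together with the norm convergence above yields the identity. I expect the main obstacle to be purely notational: one must verify the formula $((P_F\otimes 1_X)T(P_F\otimes 1_X))_{z,w}=T_{P_F z,P_F w}$ for arbitrary $z,w\in Z$, not merely for vectors in $\spn F$, and keep track of how the finite-rank matrix expansion interacts with an unrestricted pair of coefficient vectors. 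Once that bookkeeping is in place, the independence from $S$ is automatic.
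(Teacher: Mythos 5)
Your proposal is correct and follows essentially the same route as the paper: both arguments reduce the claim to the basis-independent identity $\bigl((\id\bo_S\pi)(T)\bigr)_{z,w}=\pi(T_{z,w})$ for arbitrary $z,w\in Z$, established by expanding $P_Fz$ and $P_Fw$ over finite subsets $F$ of the basis, using sesquilinearity and norm-continuity of the coefficient map together with norm-continuity of $\pi$, and passing to the limit. The only (immaterial) difference is that you rederive the coefficient formula from the $\wot$-limit construction of $\id\bo_S\pi$, whereas the paper invokes the already-established matrix formula at basis vectors and extends it by continuity.
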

\begin{proof}
    It suffices to show that for all $T\in \bB(Z)\bo M$ and $z,w\in S_1$ we have $(\id \bo_{S_2}\pi)(T)_{z,w} = (\id \bo_{S_1}\pi)(T)_{z,w}.$
    Taking limit over the finite sets $F$ of $S_2$ and using Remark~\ref{rmk:ternary operation of coefficients} we get
    \begin{align*}
        (\id \bo_{S_2}\pi)(T)_{z,w}
        & = \lim_F (\id \bo_{S_2}\pi)(T)_{P_Fz,P_Fw}
        = \lim_F \sum_{x,y\in F} \langle z,x\rangle \langle y,w\rangle (\id \bo_{S_2}\pi)(T)_{x,y}\\
        & = \lim_F \sum_{x,y\in F} \langle z,x\rangle \langle y,w\rangle \pi(T_{x,y})
        = \lim_F \pi(T_{P_F z,P_Fw}) = \pi(T_{z,w})=(\id \bo_{S_1}\pi)(T)_{z,w},
    \end{align*}
    which yields the desired result.
\end{proof}

From now on we can safely write $\id \bo \pi$ instead of $\id \bo_S \pi.$
The tensor products $\bB(Z)\bo M$ and $\bB(Z)\bo N$ have canonical structures of $\bB(Z)-$bimodules, it turns out that $\id \bo \pi$ is a bimodule map:

\begin{lemma}\label{lemma: bimodule property}
    For all $R,S\in \bB(Z)$ and $T\in \bB(Z)\bo M,$
    \[(\id \bo\pi)( (R\otimes 1_X)T(S\otimes 1_X) )=(R\otimes 1_Y)(\id \bo \pi)(T)(R\otimes 1_Y).\]
\end{lemma}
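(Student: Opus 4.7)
\bigskip

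\noindent\textbf{Proof plan.} Assuming the intended statement is $(\id\bo\pi)((R\otimes 1_X)T(S\otimes 1_X))=(R\otimes 1_Y)(\id\bo\pi)(T)(S\otimes 1_Y)$ (the expression on the right in the displayed equation appears to have a typographical $R$ where $S$ is meant), the plan is to verify the identity by computing coefficients, using the tools of Remark~\ref{rmk:ternary operation of coefficients} and formula~\eqref{equ: explicit formula por 1 barotimes pi}.

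First I will compute, for arbitrary $R,S\in\bB(Z)$, $T\in \bB(Z\otimes X)$ and $z,w\in Z$, the coefficient
\[
\bigl[(R\otimes 1_X)T(S\otimes 1_X)\bigr]_{z,w}=T_{R^*z,\,Sw}.
\]
This is immediate from the definition, since for $u,v\in X$,
\[
\langle (R\otimes 1_X)T(S\otimes 1_X)(w\otimes u),\,z\otimes v\rangle
=\langle T(Sw\otimes u),\,R^*z\otimes v\rangle
=\langle T_{R^*z,Sw}u,v\rangle.
\]
The analogous computation applied in $\bB(Z\otimes Y)$ shows that the coefficient at $(z,w)$ of the right-hand side of the desired identity equals $(\id\bo\pi)(T)_{R^*z,\,Sw}$.

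Next I will extend the matrix identity~\eqref{equ: explicit formula por 1 barotimes pi} from basis vectors to arbitrary $z,w\in Z$. Fixing a Hilbert basis $S$ and repeating the continuity/approximation argument from the proof of the preceding lemma one obtains
\[
(\id\bo\pi)(T)_{z,w}
=\lim_F\sum_{x,y\in F}\langle z,x\rangle\langle y,w\rangle\,\pi(T_{x,y})
=\lim_F \pi\bigl(T_{P_Fz,P_Fw}\bigr)
=\pi(T_{z,w}),
\]
where $F$ runs over the finite subsets of $S$ and the last equality uses the joint norm continuity of the coefficient map $(z,w,T)\mapsto T_{z,w}$ together with the norm continuity of $\pi$.

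Combining these two ingredients, both sides of the asserted identity have the same $(z,w)$-coefficient $\pi(T_{R^*z,Sw})$ for every $z,w\in Z$. Since an element of $\bB(Z\otimes Y)$ is determined by its coefficients with respect to any Hilbert basis of $Z$, the two operators agree and the lemma follows. The only potential subtlety is the passage from the basis-indexed matrix description to arbitrary vectors $R^*z,Sw$, and this is handled by exactly the continuity argument used in the previous lemma; no additional technical obstacle arises.
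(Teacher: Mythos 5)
Your proof is correct, and you were right to flag the typo: the last factor on the right-hand side should be $S\otimes 1_Y$, not $R\otimes 1_Y$. However, your route differs from the paper's. The paper first uses that $\id\bo\pi$ is \Star{}preserving to reduce to the case $R=1$, then uses norm continuity of $\id\bo\pi$ together with Borel functional calculus to reduce to the case where $S$ is a projection, and finally computes coefficients with respect to a Hilbert basis adapted to the decomposition $Z=SZ\oplus (SZ)^\perp$. You instead handle arbitrary $R,S$ in one stroke by computing the $(z,w)$\nb-coefficient of both sides directly: the identity $\bigl[(R\otimes 1_X)T(S\otimes 1_X)\bigr]_{z,w}=T_{R^*z,Sw}$ is immediate from the definition, and the key input $(\id\bo\pi)(T)_{z,w}=\pi(T_{z,w})$ for \emph{arbitrary} $z,w\in Z$ (not just basis vectors) is exactly the chain of equalities already carried out in the proof of the basis-independence lemma, via Remark~\ref{rmk:ternary operation of coefficients} and norm continuity of $\pi$. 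Your argument is the more economical of the two: it avoids the adjoint trick and the functional-calculus approximation entirely, at the mild cost of making explicit the extension of the matrix formula~\eqref{equ: explicit formula por 1 barotimes pi} beyond basis vectors, which the paper's proof only uses implicitly. Both proofs rest on the same underlying fact that elements of $\bB(Z)\bo N$ are determined by their coefficients.
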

\begin{proof}
    Since $\id \bo \pi$ preserves the involution, it suffices to consider the case $R=1_X$.
    Moreover, using the norm continuity of $\id \bo \pi$ and Borel Functional Calculus, we can further reduce the proof to the case where $S$ is a projection.

    Let $Z'$ be the union of Hilbert bases of $SZ$ and $(SZ)^\perp$.
    Take $z,w\in Z'$ and $u,v\in Y$.
    We have
    \begin{align*}
        \langle (\id \bo \pi)(T)(S\otimes 1_Y)(z\otimes u),w\otimes v\rangle 
        & = \langle Sz,z\rangle \langle (\id \bo \pi)(T)(z\otimes u),w\otimes v\rangle \\
        & = \langle Sz,z\rangle \langle \pi(T_{w,z})u,v\rangle
        = \langle Sz,z\rangle \langle \pi(T_{w,z})u,v\rangle.
    \end{align*}
    Once again, replacing $\pi$ with the identity of $M$ we get that $[T(S\otimes 1_X)]_{w,z}=\langle Sz,z\rangle T_{w,z}$ and we obtain
    \[ \langle (\id \bo \pi)(T)(S\otimes 1_Y)(z\otimes u),w\otimes v\rangle =\langle (\id \bo \pi)(T(S\otimes 1_X))(z\otimes u),w\otimes v\rangle;  \]
    which implies $(\id \bo \pi)(T)(S\otimes 1_Y)=(\id \bo \pi)(T(S\otimes 1_X))$.
\end{proof}

\begin{corollary}\label{cor: one tensor pi and tensor product}
    If $P\sbe \bB(Z)$ is a \Wstar-subalgebra, then $(\id \bo \pi)(P\bo M)\sbe P\bo N.$
\end{corollary}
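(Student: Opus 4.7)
The plan is to combine Tomita's commutation theorem with the bimodule property from Lemma~\ref{lemma: bimodule property}. Since $P$, $M$, $N$ are all unital \Wstar-subalgebras of $\bB(Z)$, $\bB(X)$, $\bB(Y)$ respectively, Tomita's theorem gives
\[
(P\bo M)' = P'\bo M',\qquad (P\bo N)'=P'\bo N',
\]
with commutants taken in the ambient $\bB(Z\otimes X)$ and $\bB(Z\otimes Y)$. In particular, a concrete way to verify that an operator $S\in\bB(Z)\bo N$ belongs to $P\bo N$ is to check that $S$ commutes with $R'\otimes 1_Y$ for every $R'\in P'$: commutation with $1_Z\otimes N'$ is automatic from $S\in\bB(Z)\bo N=(1_Z\otimes N')'$, and since the commutator $S\mapsto [S,\,\cdot\,]$ is separately $\wstar$-continuous, commutation with the generating sets $P'\otimes 1_Y$ and $1_Z\otimes N'$ upgrades to commutation with the whole $\wstar$-closed algebra $P'\bo N'$.

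Taking $T\in P\bo M=(P'\bo M')'$, the same observation implies $T$ commutes with $R'\otimes 1_X$ for every $R'\in P'$, that is,
\[
(R'\otimes 1_X)\,T \;=\; T\,(R'\otimes 1_X).
\]
Applying $\id\bo\pi$ to both sides and invoking Lemma~\ref{lemma: bimodule property} to pull the scalar factor $R'\otimes 1_X$ through, the left-hand side becomes $(R'\otimes 1_Y)(\id\bo\pi)(T)$ and the right-hand side becomes $(\id\bo\pi)(T)(R'\otimes 1_Y)$. Therefore $(\id\bo\pi)(T)$ commutes with $R'\otimes 1_Y$ for every $R'\in P'$, and since $(\id\bo\pi)(T)$ already lies in $\bB(Z)\bo N$ by construction, the preceding paragraph yields $(\id\bo\pi)(T)\in P\bo N$.

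I expect no real obstacle here: the bimodule identity of Lemma~\ref{lemma: bimodule property} is tailor-made for this purpose, and the only conceptual point is the passage from commutation with $P'\otimes 1_Y$ (plus membership in $\bB(Z)\bo N$) to commutation with all of $P'\bo N'$, which is a standard density/normality argument. Thus the whole proof should fit in two or three lines once the bimodule property is cited.
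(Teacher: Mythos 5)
Your argument is essentially the paper's: both proofs rest on Lemma~\ref{lemma: bimodule property} combined with the commutation theorem for von Neumann tensor products. The only cosmetic difference is in the last step: the paper first deduces $(\id\bo\pi)(P\bo M)\sbe (P'\bo\bC 1_Y)'=P\bo\bB(Y)$ and then intersects with $\bB(Z)\bo N$ via Corollaries 5.9 and 5.10 of \cite{takesaki1979theory}, while you invoke the full commutation theorem $(P'\bo N')'=P\bo N$ directly; both routes are valid.

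There is, however, one point you omit. In this paper a \Wstar-subalgebra is unital as an algebra, but its unit $p$ need not equal $1_Z$, and the corollary is used later in exactly that generality (e.g.\ for conditional expectations onto corners in Theorem~\ref{thm: equivariant vN tensor of equivariant maps}). Your proof silently assumes $1_Z\in P$: you write $(P\bo M)'=P'\bo M'$ and treat $P\otimes 1_Y$ and $1_Z\otimes N'$ as generating a unital algebra, which requires $p=1_Z$. The paper handles the remaining case by applying the unital case to $P''=\bC(1_Z-p)\oplus \tilde P$ and then compressing by $p\otimes 1$, using the bimodule property of Lemma~\ref{lemma: bimodule property} once more. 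This is a routine addendum, but it is needed for the statement as actually used.
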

\begin{proof}
    Assume that $1_X\in P$.   
    We know that $(\id \bo \pi)(P\bo M)\sbe \bB(Z)\bo N$.
    Combining Lemma~\ref{lemma: bimodule property} with \cite[Corollary 5.9]{takesaki1979theory} we obtain $(\id \bo \pi)(P\bo M)\sbe (P'\bo \bC 1_Y)'=P\bo \bB(Y)$.
    Now that we know
    \[(\id \bo \pi)(P\bo M)\sbe (P\bo \bB(Y))\cap (\bB(Z)\bo N),\]
    we can use  \cite[Corollary 5.10]{takesaki1979theory} to get $(\id \bo \pi)(P\bo M)\sbe P\bo N$.

    Now assume that the unit $p$ of $P$ is not $1_Z$ and let $P\to \tilde{P}\sbe \bB(pZ)$ be given by $m\mapsto \tilde{m}:=m|_{pZ}$.
    The decomposition $Z= (1_Z-p)Z \oplus pZ$ induces $P' = \bB((1_Z-p)Z)\oplus \tilde{P}'$ and $P'' = \bC (1_Z - p)\oplus \tilde{P}$ is a \Wstar-subalgebra of $\bB(Z)$ containing $1_X$.
    Hence, $(\id \bo \pi)(P''\bo M)\sbe P''\bo N.$
    We know $P=0\oplus \tilde{P}$ and $p=0\oplus \tilde{p}.$
    Then,
    \begin{align*}
      (\id \bo \pi)(P\bo M)
      = (\id \bo \pi)((p\otimes 1_X)P''(p\otimes 1_X)) 
      = (p\otimes 1_Y)(P''\bo N)(p\otimes 1_X) = (0\oplus \tilde{P})\bo M = P\bo M,
    \end{align*}
    which completes the proof in the general case.
\end{proof}

Notice that 
\begin{align*}
    \id\bo\gamma& \colon G\times (\bB(Z)\bo M)\to \bB(Z)\bo M & (t,T)&\mapsto (\id\bo \gamma_t)(T)= (1\otimes U_t)T(1\otimes U_t)^*
\end{align*}
is a \Wstar-action of $G$ on $\bB(Z)\bo M$ with unitary implementation $(\bB(Z)\bo M\sbe \bB(Z\otimes X), 1\otimes U).$

\begin{lemma}\label{lemma: equivariance}
    The map $\id \bo \pi$ is equivariant with respect to $\id\bo \gamma$ and $\id\bo \delta$.
\end{lemma}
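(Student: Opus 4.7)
The plan is to reduce the identity to a matrix-coefficient computation, using the coefficient function of Remark~\ref{rmk:ternary operation of coefficients} together with formula~\eqref{equ: explicit formula por 1 barotimes pi}. Since an element of $\bB(Z)\bo N$ is determined by its matrix with respect to any Hilbert base $S$ of $Z$, it is enough to show that both $(\id\bo\pi)\circ(\id\bo\gamma_t)$ and $(\id\bo\delta_t)\circ(\id\bo\pi)$ produce the same matrix coefficients.

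The first step is to compute how the conjugation map $\id\bo\gamma_t = \Ad(1\otimes U_t)$ acts on matrix coefficients. Fix $t\in G$ and $T\in\bB(Z)\bo M$. For arbitrary $z,w\in Z$ and $u,v\in X$, one checks directly from the definition of the coefficient function that
\[
\langle [(1\otimes U_t)T(1\otimes U_t)^*]_{z,w}\,u,v\rangle
= \langle T(w\otimes U_t^{-1}u),\,z\otimes U_t^{-1}v\rangle
= \langle T_{z,w}\,U_t^{-1}u,\,U_t^{-1}v\rangle
= \langle \gamma_t(T_{z,w})u,v\rangle,
\]
so $[(\id\bo\gamma_t)(T)]_{z,w}=\gamma_t(T_{z,w})$. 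The same argument, with $(N,\delta,V)$ in place of $(M,\gamma,U)$, shows that for every $S\in\bB(Z)\bo N$,
\[
[(\id\bo\delta_t)(S)]_{z,w}=\delta_t(S_{z,w}).
\]

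Combining these two formulas with \eqref{equ: explicit formula por 1 barotimes pi} and the equivariance of $\pi$, for any $z,w$ in a Hilbert base $S$ of $Z$ we obtain
\[
[(\id\bo\pi)((\id\bo\gamma_t)(T))]_{z,w}
=\pi(\gamma_t(T_{z,w}))
=\delta_t(\pi(T_{z,w}))
=[(\id\bo\delta_t)((\id\bo\pi)(T))]_{z,w}.
\]
Since matrix coefficients with respect to $S$ determine elements of $\bB(Z)\bo N$, this forces $(\id\bo\pi)\circ(\id\bo\gamma_t)=(\id\bo\delta_t)\circ(\id\bo\pi)$, which is the desired equivariance.

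There is no genuine obstacle here: the only computation with any substance is verifying that the conjugation $\Ad(1\otimes U_t)$ acts on matrix entries by $\gamma_t$, and this is a direct unfolding of inner products using that $U_t\in\bB(X)$ and $(1\otimes U_t)^*=1\otimes U_t^{-1}$.
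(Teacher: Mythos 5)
Your proof is correct and follows essentially the same route as the paper: both arguments reduce the equivariance to the observation that conjugation by $1\otimes U_t$ acts on matrix coefficients by $\gamma_t$ and then apply formula~\eqref{equ: explicit formula por 1 barotimes pi} together with the equivariance of $\pi$. You merely spell out the inner-product computation that the paper treats as immediate.
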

\begin{proof}
Fortunately, $\id\bo\gamma$ and $\id\bo\delta$ are implemented by the unitary representations $t\mapsto 1\otimes U_t$ and $t\mapsto 1\otimes V_t$.
Using \eqref{equ: explicit formula por 1 barotimes pi} we get that for all $T\in \bB(Z)\bo M$ and $t\in G$
\begin{align*}
   (\id \bo \pi)( (1\otimes U_t)T(1\otimes U_t)^* ) 
   &= (\id \bo\pi)( (U_tT_{z,w}U_t^*)_{z,w\in S})
   = (\pi (U_tT_{z,w}U_t^*)_{z,w\in S}) \\
   & = (V_t\pi (T_{z,w})V_t^*)_{z,w\in S} 
   =(1\otimes V_t)\pi(T)(1\otimes V_t)^*,
\end{align*}
so $\id \bo_S\pi \equiv \id \bo \pi$ is equivariant.
\end{proof}

All we have done so far can be repeated flipping the tensors, we mean using $M\bo \bB(Z)$ instead of $\bB(Z)\bo M$, so we get a ccp map $\pi\bo \id\colon M\bo\bB(Z)\to N\bo\bB(Z)$.
At this point we have all the necessary ingredients to prove equivariant versions of some results by Nagisa and Tomiyama \cite[Theorem 3]{NagTomCP}, \cite[Theorem 4]{Tomiyama_tensor_vN_algebras}.

\begin{theorem}\label{thm: equivariant vN tensor of equivariant maps}
    Let (for $i=1,2$) $G_i$ be a group and $\pi_i\colon M_i\to N_i$ be a ccp equivariant map with respect to the \Wstar-actions $\gamma^i$ and $\delta^i$ on $M_i$ and $N_i$, respectively.
    Then
    \begin{enumerate}
        \item The (usual) von Neumann tensor product action $\gamma^1\bo\gamma^2$ of $G_1\times G_2$ on $M_1\bo M_2$ is
    \begin{align*}
     G_1\times G_2&\times (M_1\bo M_2)\to M_1\bo M_2 & (t_1,t_2,T)&\mapsto (\gamma^1_{t_1}\bo\gamma^2_{t_2})(T):=(\gamma^1_{t_1}\bo \id)( (\id\bo\gamma^2_{t_2}) (T) ).
    \end{align*}
    \item There exists a ccp map $\rho\colon M_1 \bo M_2\to N_1 \bo N_2$ which is $\gamma^1\bo\gamma^2-\delta^1\bo\delta^2$ equivariant and for all $m_1\in M_1$ and $m_2\in M_2$,  $\rho(m_1\otimes m_2)=\pi_1(m_1)\otimes \pi_2(m_2)$.
    \item If $\pi_1$ and $\pi_2$ are normal, then $\rho$ can be made normal.
    \item If $\pi_1$ and $\pi_2$ are (normal) conditional expectations onto \Wstar-subalgebras, then $\rho$ can be made a (normal) conditional expectation.
    \end{enumerate}
\end{theorem}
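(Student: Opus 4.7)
The plan is to construct $\rho$ as a two-step composition built from the constructions $\pi\bo\id$ and $\id\bo\pi$ developed earlier in this appendix, and then to verify each claimed property by inspection of the two factors. First I would fix unitary implementations $(M_i\subseteq \bB(X_i), U^i)$ of $\gamma^i$ and $(N_i\subseteq \bB(Y_i), V^i)$ of $\delta^i$, so that $M_1\bo M_2\subseteq \bB(X_1\otimes X_2)$ is implemented by the jointly strongly continuous unitary representation $(t_1,t_2)\mapsto U^1_{t_1}\otimes U^2_{t_2}$ of $G_1\times G_2$. This delivers part~(1): the pointwise $\wstar$-continuity required in the definition of a \Wstar-action follows since strong convergence of unitaries entails $\wstar$-continuity of the corresponding conjugation action.

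For part~(2), I would apply the flipped construction with $Z=X_2$ to obtain a ccp map $\pi_1\bo \id_{\bB(X_2)}\colon M_1\bo \bB(X_2)\to N_1\bo \bB(X_2)$, and then use Corollary~\ref{cor: one tensor pi and tensor product} with $P=M_2\subseteq \bB(X_2)$ to restrict it to a ccp map $M_1\bo M_2\to N_1\bo M_2$. Next I would apply the original construction with $Z=Y_1$ to get a ccp map $\id_{\bB(Y_1)}\bo \pi_2\colon \bB(Y_1)\bo M_2\to \bB(Y_1)\bo N_2$, and again use Corollary~\ref{cor: one tensor pi and tensor product}, this time with $P=N_1\subseteq \bB(Y_1)$, to restrict it to $N_1\bo M_2\to N_1\bo N_2$. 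Define $\rho$ as the composition of these two restrictions; it is ccp as a composition of ccp maps, and a direct evaluation on elementary tensors gives $\rho(m_1\otimes m_2)=\pi_1(m_1)\otimes \pi_2(m_2)$.

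For equivariance, Lemma~\ref{lemma: equivariance} together with its flipped analogue shows that $\pi_1\bo \id$ is $\gamma^1\bo\id$--$\delta^1\bo\id$ equivariant and $\id\bo \pi_2$ is $\id\bo\gamma^2$--$\id\bo\delta^2$ equivariant. Since the product action $\gamma^1\bo\gamma^2$ decomposes as the commuting composition of the two factor actions (and similarly for $\delta^1\bo\delta^2$), the composition $\rho$ is equivariant with respect to $\gamma^1\bo\gamma^2$ and $\delta^1\bo\delta^2$. Part~(3) is then immediate from the matrix formula~\eqref{equ: explicit formula por 1 barotimes pi}: if each $\pi_i$ is normal, so is each factor, hence so is $\rho$.

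For part~(4), assume each $\pi_i\colon M_i\to N_i\subseteq M_i$ is a conditional expectation. By~\eqref{equ: explicit formula por 1 barotimes pi} the map $\pi_1\bo \id$ restricts to the identity on $N_1\bo M_2$ (because $\pi_1(T_{z,w})=T_{z,w}$ whenever the matrix entries $T_{z,w}$ lie in $N_1$), and symmetrically $\id\bo \pi_2$ restricts to the identity on $N_1\bo N_2$. Hence each factor is a (normal) conditional expectation, and so is their composition $\rho$, with image $N_1\bo N_2$. The main obstacle I anticipate is the bookkeeping in aligning domains and codomains at each step via Corollary~\ref{cor: one tensor pi and tensor product}; once that containment is correctly tracked, equivariance, normality, and the conditional-expectation property all pass through the composition without further work.
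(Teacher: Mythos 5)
Your proposal is correct and follows essentially the same route as the paper: $\rho$ is the composition of the two one\nobreakdash-sided slice maps (you apply $\pi_1\bo\id$ first and $\id\bo\pi_2$ second, the paper does the reverse, which is immaterial), with Corollary~\ref{cor: one tensor pi and tensor product} tracking codomains, Lemma~\ref{lemma: equivariance} together with the bimodule property giving equivariance, and the matrix formula~\eqref{equ: explicit formula por 1 barotimes pi} giving normality and the conditional\nobreakdash-expectation property. The only bookkeeping point you leave implicit in part~(4) -- reconciling the implementation $N_i\sbe\bB(Y_i)$ with $N_i\sbe M_i\sbe\bB(X_i)$ -- is handled in the paper by taking $Y_i=p_iX_i$ for $p_i$ the unit of $N_i$, exactly the kind of alignment you anticipated.
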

\begin{proof}
     Take unitary implementations $(M_i\sbe \bB(X_i),U^i)$ and $(N_i\sbe \bB(Y_i),V^i)$.
     By Corollary~\ref{cor: one tensor pi and tensor product}, $(\id \bo \pi_2)(M_1 \bo M_2)\sbe M_1 \bo N_2$ and $(\pi_2\bo 1)(M_1 \bo N_2)\sbe N_1 \bo N_2.$
    Then $\rho\colon M_1 \bo M_2\to N_1 \bo N_2,$ $T\mapsto (\pi_2\bo 1)((\id \bo \pi_2)(T)),$ is ccp.
    In addtion, for all $m_i\in M_i$, $\rho(m_1\otimes m_2)=\pi_1(m_1)\otimes \pi_2(m_2)$.
    Regarding the normality claims, recall that $\id\bo\pi_2$ (respectively, $\pi_1\bo 1$) is normal if and only if $\pi_2$ ($\pi_1$) is normal, so $\rho$ is normal if both $\pi_1$ and $\pi_2$ are normal

    After recalling that each $\gamma^i_t$ is a $\wstar$-homeomorphism (because they are \Cstar-isomorphism) one gets that for all $(t_1,t_2,T)\in G_1\times G_2\times M_2\bo M_2$, $(\gamma^1_{t_1}\bo \id)( (\id\bo\gamma^2_{t_2}) (T) ) = (U^1_{t_1}\otimes U^2_{t_2})T(U^1_{t_1}\otimes U^2_{t_2})^*.$
    Then $(t_1,t_2,T)\mapsto (\gamma^1_{t_1}\bo \id)( (\id\bo\gamma^2_{t_2}) (T) )$ is the action with unitary implementation $(M_1\bo M_2\sbe \bB(X_1\otimes X_2),U^1\otimes U^2),$ i.e. the usual action of $G_1\times G_2$ on $M_1\bo M_2$.

    To prove $\rho$ is equivariant we take $T\in M_1 \bo M_2$, $t_1\in G_1$ and $t_2\in G_2$.
    Using Lemma~\ref{lemma: equivariance} and Corollary~\ref{cor: one tensor pi and tensor product}
    we get
    \begin{align*}
        \rho( (\gamma^1_{t_1}\bo \gamma^2_{t_2})(T))
        &= (\pi_2\bo 1)( (\id \bo \pi_2) (  (U^1_{t_1}\otimes U^2_{t_2})T (U^1_{t_1}\otimes U^2_{t_2})^*) )\\
        &= (\pi_2\bo 1)( (\id \bo \pi_2) ( (U^1_{t_1}\otimes 1) (1\otimes U^2_{t_2})T (1\otimes U^2_{t_2})^*(U^1_{t_1}\otimes 1)^*) )\\
        &= (\pi_2\bo 1)((U^1_{t_1}\otimes 1) (\id \bo \pi_2) (  (1\otimes U^2_{t_2})T (1\otimes U^2_{t_2})^*) (U^1_{t_1}\otimes 1)^*)\\
        &=(V^1_{t_1}\otimes 1) (\pi_2\bo 1)( (1\otimes V^2_{t_2})(\id \bo \pi_2) (  T  )(1\otimes V^2_{t_2})^*)(V^1_{t_1}\otimes 1)^*\\
        &=(V^1_{t_1}\otimes 1)(1\otimes V^2_{t_2}) (\pi_2\bo 1)( (\id \bo \pi_2) (  T  ))(1\otimes V^2_{t_2})^*(V^1_{t_1}\otimes 1)^*\\
        & =  (\delta^1_{t_1}\bo \delta^2_{t_2})(\rho(T)),
    \end{align*}
    so $\rho$ is equivariant.

    In case $\pi_1$ and $\pi_2$ are conditional expectations, let $p_i$ be the unit of $N_i$.
    Then we may take $Y_i=p_i X_i$ and use the implementations $(N_i\sbe \bB(pX_i),t\mapsto pU^i_tp)$.
    We are implicitly considering two faithful normal representations of $N_1\bo N_2,$ one in $\bB(p_1X_1\bo p_2X_2)$ and one in $\bB(X_1\otimes X_2)$.
    Formally, \eqref{equ: explicit formula por 1 barotimes pi} implies that for every  $T\in N_1 \bo N_2\sbe \bB(X_1\otimes X_2)$, $\rho(T) = (\pi_1 \bo 1) ( \id \bo \pi_2(T) )=(\pi_1 \bo 1) (T)=T\in \bB(p_1X_1\bo p_2X_2)$.
    Then $\rho$ is a conditional expectation. 
\end{proof}

Lemma 2.1 of \cite{ADaction1979} follows from the Theorem above by taking $G_1=G_2$ and identifying $G_1\cong \{(t,t)\colon t\in G_1\}\sbe G_1\times G_1$.
After this one gets \cite[Proposition 3.9]{ADaction1979}, which can be stated as

\begin{corollary}
    If, for $i=1,2$, $\gamma^i$ is a \Wstar-action of $G$ on  $M_i$ and at least one of them is \Wstar-amenable, then the diagonal action $\gamma^1 \bo^d \gamma^2:=(\gamma^1 \bo \gamma^2)|_{G}$ of $G$ on $M_1 \bo M_2$ is \Wstar-amenable.
\end{corollary}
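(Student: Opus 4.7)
Without loss of generality, assume $\gamma^1$ is \Wstar-amenable. By definition, there exists an equivariant conditional expectation
\[
   P\colon \li{G}\bo M_1 \longrightarrow M_1,
\]
where $\li{G}\bo M_1$ carries the diagonal action of $G$ coming from $\Ad(\lambda)$ on $\li{G}$ and $\gamma^1$ on $M_1$. Explicitly, $P$ is equivariant for the action $G\to \Aut(\li{G}\bo M_1)$, $t\mapsto \Ad(\lambda_t)\bo\gamma^1_t$, and the action $\gamma^1$ on $M_1$.

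The plan is to apply Theorem~\ref{thm: equivariant vN tensor of equivariant maps} to the pair $(P,\id_{M_2})$, where $P$ is as above (viewed as an equivariant ccp map between \Wstar-algebras carrying $G$-actions) and $\id_{M_2}$ is equivariant for $\gamma^2$ and itself. Taking both groups equal to $G$, the theorem produces a normal conditional expectation
\[
   \rho:=P\bo\id_{M_2}\colon (\li{G}\bo M_1)\bo M_2 \longrightarrow M_1\bo M_2,
\]
which is equivariant for the product actions of $G\times G$: on the domain, $(s,t)$ acts by $[\Ad(\lambda_s)\bo\gamma^1_s]\bo\gamma^2_t$; on the codomain, $(s,t)$ acts by $\gamma^1_s\bo\gamma^2_t$.

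Next, I would restrict both actions to the diagonal subgroup $\{(t,t):t\in G\}\cong G$. On the codomain this restriction is by definition $\gamma^1\bo^d\gamma^2$. On the domain the restricted action is $t\mapsto \Ad(\lambda_t)\bo\gamma^1_t\bo\gamma^2_t$. Using the canonical associativity isomorphism
\[
   (\li{G}\bo M_1)\bo M_2 \;\cong\; \li{G}\bo (M_1\bo M_2),
\]
this diagonal action corresponds exactly to the diagonal action of $G$ on $\li{G}\bo(M_1\bo M_2)$ induced by $\Ad(\lambda)$ and $\gamma^1\bo^d\gamma^2$, as required by the definition of \Wstar-amenability for $\gamma^1\bo^d\gamma^2$. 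Transporting $\rho$ through the associativity isomorphism produces the desired equivariant conditional expectation
\[
   \li{G}\bo(M_1\bo M_2)\longrightarrow M_1\bo M_2,
\]
witnessing the \Wstar-amenability of $\gamma^1\bo^d\gamma^2$.

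The only point requiring any care is the bookkeeping of the actions under the associativity isomorphism; this is essentially verified on elementary tensors $f\otimes m_1\otimes m_2$, where both descriptions yield $(\Ad(\lambda_t)(f))\otimes\gamma^1_t(m_1)\otimes\gamma^2_t(m_2)$, and then extended by normality. That $\rho$ remains a conditional expectation onto $M_1\bo M_2$ (viewed inside $\li{G}\bo(M_1\bo M_2)$ via $m\mapsto 1\otimes m$) follows directly from the last clause of Theorem~\ref{thm: equivariant vN tensor of equivariant maps}, since $P$ is a conditional expectation and $\id_{M_2}$ trivially is.
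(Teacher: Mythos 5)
Your proof is correct and follows essentially the same route as the paper, which derives this corollary from Theorem~\ref{thm: equivariant vN tensor of equivariant maps} by the same diagonal identification $G\cong\{(t,t)\}\sbe G\times G$ (this is exactly Anantharaman-Delaroche's argument for \cite[Proposition 3.9]{ADaction1979}, which the paper cites). One cosmetic slip: you call $\rho=P\bo\id_{M_2}$ a \emph{normal} conditional expectation, but the expectation $P$ witnessing amenability need not be normal, so neither need $\rho$ be --- harmless here, since the definition of \Wstar-amenability only asks for an equivariant conditional expectation.
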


The ``non-diagonal'' version would be

\begin{corollary}
    If, for $i=1,2$, $\gamma^i$ is a \Wstar-amenable action of $G_i$ on $M_i$, then action  $\gamma^1 \bo \gamma^2$ of $G_1\times G_2$ on $M_1 \bo M_2$ is \Wstar-amenable.
\end{corollary}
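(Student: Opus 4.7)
The plan is to deduce this corollary directly from Theorem~\ref{thm: equivariant vN tensor of equivariant maps}, in essentially the same way the preceding (diagonal) corollary was obtained, but without the final diagonal identification. By the definition of \Wstar-amenability, for each $i=1,2$ there is an equivariant conditional expectation
\[
   P_i\colon \li{G_i}\bo M_i\longrightarrow M_i,
\]
where $\li{G_i}\bo M_i$ carries the diagonal action $\Ad(\lambda^{G_i})\bo\gamma^i$ of $G_i$ and $M_i$ carries $\gamma^i$.

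Next, I would apply Theorem~\ref{thm: equivariant vN tensor of equivariant maps} with $\pi_1=P_1$ and $\pi_2=P_2$ (invoking part~(4), since the $P_i$ are conditional expectations onto \Wstar-subalgebras) to produce a conditional expectation
\[
  \rho\colon (\li{G_1}\bo M_1)\bo(\li{G_2}\bo M_2)\longrightarrow M_1\bo M_2
\]
that is equivariant with respect to the tensor product action of $G_1\times G_2$ on the source and to the action $\gamma^1\bo\gamma^2$ on the target. To finish, I would use the canonical flip of the middle two tensor factors together with the identification $\li{G_1}\bo\li{G_2}\cong\li{G_1\times G_2}$ to rewrite the source as $\li{G_1\times G_2}\bo (M_1\bo M_2)$.

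The only point requiring attention is the verification that, under this rearrangement, the $G_1\times G_2$-action supplied by Theorem~\ref{thm: equivariant vN tensor of equivariant maps} matches the diagonal action $\Ad(\lambda^{G_1\times G_2})\bo(\gamma^1\bo\gamma^2)$ required by the definition of amenability of $\gamma^1\bo\gamma^2$. Since all actions in play are spatial and implemented by tensor products of the form $\lambda^{G_1}\otimes U^1\otimes \lambda^{G_2}\otimes U^2$, after the flip this becomes $\lambda^{G_1\times G_2}\otimes(U^1\otimes U^2)$, so the check is essentially automatic and I do not anticipate any genuine obstacle. Once this identification is in place, $\rho$ is exactly the equivariant conditional expectation witnessing the \Wstar-amenability of $\gamma^1\bo\gamma^2$.
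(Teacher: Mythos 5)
Your proposal is correct and follows exactly the route the paper takes: the paper's proof simply says to apply Theorem~\ref{thm: equivariant vN tensor of equivariant maps} together with the canonical isomorphism $(\li{G_1}\bo M_1)\bo (\li{G_2}\bo M_2)\cong \li{G_1\times G_2}\bo (M_1 \bo M_2)$, leaving the details to the reader. You have supplied precisely those details, including the equivariance check under the flip, which is the only point needing verification.
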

\begin{proof}
    Use Theorem~\ref{thm: equivariant vN tensor of equivariant maps} and the canonical isomorphism
    \[ (\li{G_1}\bo M_1)\bo (\li{G_2}\bo M_2)\cong \li{G_1\times G_2}\bo (M_1 \bo M_2),\]
    the details are left to the reader.
\end{proof}

\section{An extension Lemma for the continuous part of a \Wstar-action}\label{sec: an extension lemma}

Let $\gamma$ be a \Wstar-action of $G$ on $M$ and $(M\sbe \bB(X),U)$ a unitary implementation.
We briefly recall some facts about the continuous part of $\gamma$.

As in \cite{ADactionII1982} we use the bilinear function $\lone{G}\times M\to M,\ (f,m)\mapsto \gamma_f(m),$ such that for all $\varphi\in M_*,\ f\in \lone{G}$ and $m\in M$, $\varphi(\gamma_f(m))=\int_G \varphi(\gamma_t(m))f(t)dt$.
One way of proving the existence of such function is by using Riesz's representation Theorem to show that for each $(f,m)\in L^1(G)\times M$ there exists a unique operator $I^U_f(m)\in \bB(X)$ such that $\langle x,I^U_f(m)y\rangle = \int_G \langle U_t^*x, mU_t^* y\rangle f(t)dt=\int_G \langle x, \gamma_t(m) y\rangle f(t)dt$.
Since $\|\langle x, \gamma_t(m) y\rangle\|\leq \|m\|\|x\|\|y\|,$ we have $\|I^U_f(m)\|\leq \|m\|\|f\|_1$.
If $\omega$ is an ultra weak continuous functional of $\bB(X)$ such that $\omega(M)=\{0\},$ then $\omega(T)=\sum_{n\in \bN} \langle x_n,Ty_n\rangle$  for sequences $\{x_n\}_{n\in \bN},\{y_n\}_{n\in \bN}\subset X$ such that $\sum_{n\in \bN} \|x_n\|^2+\|y_n\|^2<\infty$; and $\omega(I^U_f(m))=\int_G \omega(\gamma_t(m))f(t)dt =0$ and it follows that $I^U_f(m)\in M$.
So we can set $\varphi_f(m):=I^U_f(m)$.

For $f,g\in \lone{G}$ and $s\in G$ we define $sf,fs\in \lone{G}$ by $sf(t)=f(\smu t)$ and $fs(t)=f(t\smu )\Delta(s)^{-1}$; while $f*g$ is the convolution product. 
With this notation, for all $m\in M$ one has
\begin{align*}
    \|\gamma_f(m)\| & \leq \|m\|\|f\|_1 & \gamma_s(\gamma_f(m))=&\gamma_{sf}(m) & \gamma_f(\gamma_s(m))& =\gamma_{fs}(m) & \gamma_f(\gamma_g(m))&=\gamma_{f*g}(m).
\end{align*}
In particular, $\|\gamma_r(\gamma_f(m))-\gamma_s(\gamma_f(m))\|= \|\gamma_{rf-sf}(m)\|\leq \|m\|\|rf-sf\|_1.$
For $f\in C_c(G),$ $r\mapsto rf$, is $\|\ \|_1$-continuous and it follows that $\{\gamma_f(m)\colon f\in C_c(G),\ m\in M\}\sbe M^c.$ 
Using an approximate unit of $\lone{G}$ contained in $C_c(G)$ one can show that $M^c$ is $\wstar$-dense in $M.$
For each $f\in C_c(G)^+,$ $\gamma_f\colon M\to M^c$ is completely positive because for all $m_1,\ldots,m_n\in M$ and $x_1,\ldots,x_n\in X$ we have $\sum_{i,j=1}^n \langle x_i,\gamma_f(m_i^*m_j)x_j\rangle = \int_G \|\sum_{i=1}^n m_iU_t^*x_i\|^2f(t)dt \geq 0$.

The idea of the proof below comes from that of \cite[Lemme 2.1]{ADactionII1982}.

\begin{lemma}\label{lemma: extension of pic}
If $(Y,\rho_c,V)$ is a unital covariant representation of $\gamma^c$, then there exists an equivariant ucp extension $\rho\colon M\to \bB(X)$ of $\rho_c$ with $\rho(M)'=\rho_c(M^c)'$.
\end{lemma}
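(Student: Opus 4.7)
The plan is to follow the strategy of Anantharaman--Delaroche from \cite[Lemme~2.1]{ADactionII1982}, combining an Arveson-type extension with an averaging argument along an approximate identity of $L^1(G)$.

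First I would apply Arveson's extension theorem to produce a ucp map $\tilde{\rho}\colon M\to \bB(Y)$ extending $\rho_c$. The crucial observation is that $\gamma_f(M)\subseteq M^c$ for every $f\in L^1(G)$, so the composition $\tilde{\rho}\circ\gamma_f$ agrees with the canonical ucp map $\rho_c\circ\gamma_f\colon M\to \rho_c(M^c)\subseteq \rho_c(M^c)''$, independently of the particular Arveson extension chosen. This already places the problem into the correct target $\rho_c(M^c)''$, which is precisely what is needed for the commutant condition.

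Next I would construct $\rho$ as a WOT-cluster point. Fix an approximate identity $\{f_i\}_{i\in I}\subset C_c(G)^+$ of $L^1(G)$ with $\int f_i=1$ and supports shrinking to $\{e\}$. For each $m\in M$, the net $\{\rho_c(\gamma_{f_i}(m))\}_i$ lies in the WOT-compact closed ball of radius $\|m\|$ in $\bB(Y)$; by Tychonoff applied to the product of these balls, passing to a universal subnet yields simultaneous WOT-convergence, and one defines $\rho(m)$ as the resulting limit. Routine verifications then give that $\rho$ is ucp (pointwise WOT-limit of ucp maps), that $\rho$ extends $\rho_c$ (since $\|\gamma_{f_i}(m)-m\|\to 0$ in norm for $m\in M^c$, the standard property of $L^1$-approximate identities on the continuous part), and that $\rho(M)\subseteq \rho_c(M^c)''$ (the latter set being WOT-closed). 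The chain of inclusions $\rho_c(M^c)\subseteq \rho(M)\subseteq \rho_c(M^c)''$ then immediately produces the commutant identity $\rho(M)'=\rho_c(M^c)'$.

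The hard step will be equivariance. A direct calculation using $\gamma_s\gamma_{f_i}=\gamma_{sf_i}$ and $\gamma_{f_i}\gamma_s=\gamma_{f_i s}$ gives
\[
    V_s\rho(m)V_s^*=\lim_i\rho_c(\gamma_{sf_i}(m)) \quad\text{while}\quad \rho(\gamma_s(m))=\lim_i\rho_c(\gamma_{f_i s}(m)),
\]
and these need not coincide a priori because $\|sf_i-f_i s\|_1$ does not tend to zero for non-amenable $G$. Following the AD trick, I would replace the naive limit by a selection inside the weak$^*$-compact convex set of ucp extensions of $\rho_c$ with image in $\rho_c(M^c)''$, on which $G$ acts by the affine maps $\rho\mapsto \Ad(V_{s^{-1}})\circ\rho\circ\gamma_s$, and exploit the $W^*$-algebraic structure of $\rho_c(M^c)''$ (including Kaplansky density and the identification of $M^c$ with a $\wstar$-dense subalgebra of $M$) to produce a $G$-invariant cluster point without assuming amenability of $G$. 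This equivariance step is the main technical obstacle and is the heart of the argument in \cite[Lemme~2.1]{ADactionII1982}.
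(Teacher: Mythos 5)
Your construction of $\rho$ as a pointwise $\wstar$-cluster point of the net $\{\rho_c\circ\gamma_{f_i}\}_{i}$ is exactly the paper's, and your verifications that $\rho$ is ucp, extends $\rho_c$ (via norm-convergence $\gamma_{f_i}(m)\to m$ for $m\in M^c$), and has range in $\rho_c(M^c)''$ --- whence $\rho(M)'=\rho_c(M^c)'$ --- are all fine. (The preliminary Arveson extension is superfluous: since $\gamma_{f}(M)\subseteq M^c$ for $f\in C_c(G)$, the maps $\rho_c\circ\gamma_{f_i}$ are defined without it, and you never use $\tilde\rho$ again.)

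The genuine gap is the equivariance step, which you explicitly leave unresolved. The fix you sketch --- selecting a $G$-invariant point of the weak\Star{}compact convex set of ucp extensions under the affine action $\rho\mapsto \Ad(V_{s^{-1}})\circ\rho\circ\gamma_s$ --- is not a viable route: the existence of a fixed point for an affine action of $G$ on a compact convex set is precisely a fixed-point property characterizing amenability of $G$, which is not assumed here. Moreover, the obstruction you diagnose is not the real one: equivariance does not require $\|sf_i-f_is\|_1\to 0$. The paper instead shows by a direct double-limit computation that the naive cluster point is already equivariant. Writing $\rho(\gamma_t(m))=\lim_j\rho_c(\gamma_{f_{i_j}t}(m))$, one first replaces $m$ by $\gamma_{f_{i_k}}(m)\in M^c$ (a bounded net converging $\wstar$ to $m$); for such elements $s\mapsto\gamma_s(\gamma_{f_{i_k}}(m))$ is norm-continuous, so $\gamma_{f_{i_j}t}(\gamma_{f_{i_k}}(m))$ is a norm-valued integral, and covariance of $\rho_c$ on $M^c$ turns $\langle\rho_c(\gamma_{f_{i_j}t}(\gamma_{f_{i_k}}(m)))x,y\rangle$ into $\int_G\langle U_{st}\,\rho_c(\gamma_{f_{i_k}}(m))\,U_{st}^{*}x,y\rangle f_{i_j}(s)\,ds$. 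Letting $k\to\infty$ (boundedness of the net, compact support of $f_{i_j}$, and continuity of $s\mapsto U_s^{*}x$ give uniform convergence of the integrands) and then $j\to\infty$ (the $f_{i_j}$ concentrate at $e$) yields $\langle U_t\rho(m)U_t^{*}x,y\rangle$. You need to supply this computation or an equivalent one; as written, your proof is incomplete at its central point.
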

\begin{proof}
    Let $I$ be the set of compact neighborhoods of $e\in G$ with the order $i\leq j\Leftrightarrow j\sbe i$.
    Fix a net $\{f_i\}_{i\in I}\sbe C_c(G)$ such that $f_i\geq 0$; $\int_G f_i(t)dt =1$  and $\supp(f_i)\sbe i$.
    The net $\{ \rho_c\circ \gamma_{f_i} \}_{i\in I}$ is contained in the set of ucp maps from $M$ to $\bB(X)$ and this set is compact with the topology of pointwise $\wstar$-convergence. 
    Hence, there exists a subnet $\{i_j\}_{j\in J}$ and a ucp map $\rho\colon M\to \bB(X)$ such that for all $m\in M$, $\rho(m)=\wstar \lim_j \rho_c(\gamma_{f_{i_j}}(m))$.
    For all $m\in M^c$ and $x,y\in X$ we have $\gamma_{f_i}(m)=\int_G \gamma_t(m)f_i(t)dt,$ because $t\mapsto \gamma_t(m)f_i(t)$ is norm-continuous and has compact support.
    This implies
    \[\langle \rho(m)x,y\rangle = \lim_j \int_G \langle \rho_c(m)U_t^*x,U_t^*y\rangle f_{i_j}(t)dt = \langle \rho_c(m)U_e^*x,U_e^*y\rangle = \langle \rho_c(m)x,y\rangle;\]
    and it follows that $\rho$ is an extension of $\rho_c$.

   To prove that $\rho$ is equivariant we take $m\in M$, $t\in G$ and $x,y\in X$.
   We have $\langle \rho(\gamma_t(m))x,y\rangle = \lim_j \langle \rho_c( \gamma_{f_{i_j}}(\gamma_t(m)))x,y\rangle=\lim_j \langle \rho_c( \gamma_{f_{i_j}t}(m))x,y\rangle$.
   The net $\{ f_{i_j}t * f_{i_k} \}_{k\in J}$ converges to $f_{i_j}t$ in $\lone{G}$, so $\gamma_{f_{i_j}t*f_{i_k}}(m)=\gamma_{f_{i_j}t}(\gamma_{f_{i_k}}(m))$ converges to $\gamma_{f_{i_j}t}(m)\in M^c$ in norm.
   Besides, $s\mapsto \gamma_s( \gamma_{f_{i_k}}(m))f_{i_j}t(s)$ is norm continuous and has compact support, so it is integrable and its norm-integral is $\gamma_{f_{i_j}t}(\gamma_{f_{i_k}}(m)).$
   Combining these facts we get
   \begin{align*}
        \langle \rho(\gamma_t(m))x,y\rangle & = \lim_j\lim_k \langle \rho_c( \gamma_{f_{i_j}t}(\gamma_{f_{i_k}}(m)))x,y\rangle 
   =\lim_j\lim_k \langle \rho_c( \int_G \gamma_s(\gamma_{f_{i_k}}(m))f_{i_j}t(s)ds)x,y\rangle\\
   & = \lim_j\lim_k \int_G\langle U_s \rho_c( \gamma_{f_{i_k}}(m)))U_s^* x,y\rangle f_{i_j}(s\tmu)\Delta(t)^{-1}ds\\
   & = \lim_j\lim_k \int_G\langle U_s U_t \rho_c( \gamma_{f_{i_k}}(m)))U_t^* U_s^* x,y\rangle f_{i_j}(s)ds.
   \end{align*}
  Notice that $\{U_t \rho_c( \gamma_{f_{i_k}}(m)))U_t^*\}_{k\in K}$ is a bounded net with \wot-limit $U_t\rho(m)U_t^*$.
  Besides, for all $j\in J$, the support of $f_{i_j}$ is compact and $s\mapsto U_s^*x$ is continuous.
  Combining these facts we can prove that the net $\{s\mapsto \langle U_{st} \rho_c( \gamma_{f_{i_k}}(m)))U_{st}^*x,y\rangle f_{i_j}(s)\}_{k\in K}\subset C_c(G)$ converges uniformly to $s\mapsto \langle U_{st}\rho(m)U_{st}^* x,y\rangle f_{i_j}(s)$.
  Thus, $\rho$ is equivariant because
  \begin{equation*}
        \langle \rho(\gamma_t(m))x,y\rangle  = \lim_j \int_G\langle U_s U_t \rho(m)U_t^* U_s^* x,y\rangle f_{i_j}(s)ds = \langle U_t \rho(m)U_t^* x,y\rangle.
  \end{equation*}

    Clearly, $\rho_c(M^c)\sbe \rho(M),$ so $\rho(M)'\sbe \rho_c(M^c)'.$
    To prove the converse inclusion we notice that for all $m\in M,$ $\rho(m)=\wot\lim_j \rho_c(\gamma_{f_{i_j}}(m))\in \overline{\rho_c(M)}^{\wot} = \rho_c(M)''.$
    Then $\rho(M)\sbe \rho_c(M)''$ and it follows that $\rho_c(M)'=\rho_c(M)'''\sbe \rho(M)'.$
\end{proof}

\section{Deferred Proofs}\label{sec: deferred proofs}

In this appendix, we provide the full details for the proofs of Lemmas \ref{lemma: the key lemma}, \ref{lem:equivalence preserved under diagonal tensor}, and \ref{lemma: pmap and the strong equivalente}.

\subsection{Proof of Lemma \ref{lemma: the key lemma}}
    Notice that $Z(\pi(A)'')=Z(\pi(A)')=Z(M)$.
    So, as discussed previously, it suffices to show that there exists an equivariant conditional expectation $P\colon  (\li{G} \bo Z(M))^c\to Z(M)^c$.
    
    Let $(Y,\rho,V)$ be a faithful and nondegenerate covariant representation of $\alpha\otmax^d\beta$ and split it into covariant representations $(Y,\rho_A,V)$ and $(Y,\rho_c,V)$ of $\alpha$ and $\beta$, that is,  $\rho_A(a)\rho_c(m)=\rho(a\otimes m)$ for $a\in A$ and $m\in M$. Since $\alpha\otmax^d \beta$ has the \wcp, $1\rho\rtimes \lambda V$ is faithful and 
    \[C:= 1\rho\rtimes \lambda V ((A\otmax M^c)\rtimes G)\sbe \bB(\ltwo{G}\otimes Y)\] is isomorphic to the full crossed product.
    Let $\iota^c\colon M^c\to \bB(X)$ be the inclusion and $\psi_0\colon C\to \bB(X)$ the unique *-homomorphism such that $\psi_0\circ (1\rho\rtimes \lambda V)=(\pi\times\iota^c)\rtimes U$; where $\pi\times \iota^c (a\otimes m)=\pi(a)m$.
    
    Fix a ccp extension $\psi\colon \bB(\ltwo{G}\otimes Y)\to \bB(X)$ of $\psi_0$ and name $D$ the multiplicative domain of $\psi$ \cite[Proposition 1.5.7 \& Definition 1.5.8]{BrownOzawa}.
    Recall that 
    \begin{align*}
        D & = \{ T\in \bB(\ltwo{G}\otimes Y)\colon \psi(T^*T)=\psi(T)^*\psi(T),\ \psi(TT^*)=\psi(T)\psi(T)^* \}\\
        & = \{ T\in \bB(\ltwo{G}\otimes Y)\colon \forall \ S\in \bB(\ltwo{G}\otimes Y),\ \psi(ST)=\psi(S)\psi(T),\ \psi(TS)=\psi(T)\psi(S) \}.
    \end{align*}
    From the first identity it follows that $C\sbe D$. Now fix $t\in G$.
    To prove that $\psi(\lambda V_t)=U_t$ we take $f\in C_c(G,A\otmax M^c)$ and $x\in X$.
    We have
    \begin{align*}
        \psi(\lambda V_t) (\pi\times\iota^c)\rtimes U(f) x
       & = \psi(\lambda V_t) \psi(1\rho\rtimes \lambda V(f)) x
       = \psi\left((\lambda V_t)1\rho\rtimes \lambda V(f)\right)x
       = \psi\left(1\rho\rtimes \lambda V(\tilde{\alpha}_t(f))\right)x\\
       &= (\pi\times\iota^c)\rtimes U(\tilde{\alpha}_t(f))x
       = U_t(\pi\times\iota^c)\rtimes U(f)x.
    \end{align*}
Since $(\pi\times\iota^c)\rtimes U$ is nondegenerate, the equalities above imply that $\psi(\lambda V_t)=U_t$.
In particular, $\psi$ is unital and this yields $\lambda V_t\in D$ (because $\lambda V_t$ and $U_t$ are unitaries).
Notice that this implies that $\psi$ is equivariant with respect to $\Ad(\lambda V)$ and $\Ad(U)$.

We claim that $\psi\circ (1\rho) = \pi\times\iota^c$.
Indeed, this is a straightforward consequence of the fact that for all $a\in A\otmax M^c$, $f\in C_c(G,A\otmax M^c)$ and $x\in X$,
 \begin{align*}
        \psi(1\rho(a)) (\pi\times\iota^c)\rtimes U(f) x
       & = \psi(1\rho(a)) \psi(1\rho\rtimes \lambda V(f)) x
       = \psi\left((1\rho(a))1\rho\rtimes \lambda V(f)\right)x\\
       & = \psi\left(1\rho\rtimes \lambda V(af)\right)x
       = (\pi\times\iota^c)\rtimes U(af)x
       = (\pi\times\iota^c(a)) (\pi\times\iota^c)\rtimes U(f)x.
    \end{align*}
The restriction of $\psi$ to the image of $1\rho$ is a representation, so $1\rho(A\otmax M^c)\sbe D$.
Moreover, for all $a\in A$, $\psi( 1\otimes \rho_A(a))=\psi(1\rho(a\otimes 1))=\pi(a)$.
Thus, for all $m\in M^c$, $\psi(1\otimes\rho_c(m))\pi(a) = \psi(1\otimes \rho(a\otimes m))=\pi(a)\iota^c(m)=\iota^m(c)\pi(a)$.
Replacing $a$ with an approximate unit of $A$ and taking limit in the weak operator topology (\wot) it follows that $\psi(1\otimes\rho_c(m)) = \iota^c(m)$.
This implies that the image of $1\otimes\rho_c$ is contained in $D$.

Let  $\overline{\rho_c}\colon M\to \bB(Y)$ be the extension of $\rho_c$ given by Lemma~\ref{lemma: extension of pic}.
Then $\rho_A(A)\sbe \rho_c(M^c)'=\overline{\rho_c}(M)'$ and $\overline{\rho_c}(M)\sbe \overline{\rho_c}(M)''\sbe \rho_A(A)'$.
Since $\rho_c$ is a *-homomorphism and $\overline{\rho_c}$ an extension of $\rho_c$, $M^c$ is contained in the multiplicative domain of $\rho$ and it follows that for all $T\in Z(M)$ and $m\in M^c$,  $\overline{\rho_c}(T)\rho_c(m)=\overline{\rho_c}(Tm)=\overline{\rho_c}(mT)=\rho_c(m)\overline{\rho_c}(T)$.
Hence, $\overline{\rho_c}(Z(M))\sbe \rho_A(A)'\cap \rho_c(M)'.$

Viewing $\overline{\rho_c}|_{Z(M)}$ as a ucp equivariant function from $Z(M)$ to $N:=\rho_A(A)'\cap \rho_c(M)'$, we can use Theorem~\ref{thm: equivariant vN tensor of equivariant maps} to get a ucp  and $\Ad(\lambda)\bo (\gamma|_{Z(M)})-\Ad(\lambda)\bo (\Ad(V)|_N)$ equivariant map  
\[\mu\colon \li{G} \bo Z(M)\to \li{G}\bo N \sbe  \bB(\ltwo{G}\otimes Y) \]
such that, for all $f\in \li{G}$ and $m\in Z(M)$, $\mu(f\otimes m)=f\otimes \overline{\rho_c}(m)$.

We claim that $\psi( \mu( \li{G} \bo Z(M) ) )\sbe Z(M)$.
To prove this we use the fact that
\[Z(M) = M\cap M'\equiv \pi(A)'\cap M' = (\pi\times\iota^c(A\otmax M^c))'.\]
Take $T\in \li{G} \bo Z(M)$ and $a\in A\otmax M^c$.
Since $1\rho(a)\in D$; $\psi(1\rho(a))=\pi\times\iota^c(a)$ and $\li{G}\bo N\sbe [1\rho(A\otmax M^c)]'$ we get $\psi(\mu(T))\pi\times\iota^c(a) = \psi(\mu(T)1\rho(a))= \psi(1\rho(a)\mu(T)) = \pi\times \iota^c(a)\psi(\mu(T)),$ so $\psi(\mu(T))\in Z(M).$

Since both $\mu$ and $\psi$ are equivariant and norm continuous, $\psi(\mu( (\li{G} \bo Z(M))^c ))\sbe Z(M)^c$ and we get a ccp equivariant map
\begin{align*}
  P & \colon (\li{G} \bo Z(M))^c\to Z(M)^c & T&\mapsto \psi(\mu(T))
\end{align*}
For $m\in Z(M)^c\sbe \li{G} \bo Z(M)$ we have $m\in M^c$ and $P( 1\otimes m ) = \psi(1\otimes \rho_c(m)) = \iota^c(m) = m.$
This shows that $P$ is an equivariant conditional expectation.

\subsection{Proof of Lemma \ref{lem:equivalence preserved under diagonal tensor}}
Let $\cX=\{X_t\}_{t\in G}$ be a $\cB$--$\cC$ equivalence bundle, and let $\bL(\cX)=\{\bL(X_t)\}_{t\in G}$ denote its linking bundle as in \cite[Section~3.1]{AbFrrEquivalence}. 
Since $B_e$ and $C_e$ are hereditary \Cstar-subalgebras of $\bL(X_e)$, we have
\[
   A_e\otmax B_e \;\subseteq\; A_e\otmax \bL(X_e) \;\supseteq\; A_e\otmax C_e,
\]
so that $\cA\otmax^d \cB$ and $\cA\otmax^d \cC$ can be naturally regarded as Fell subbundles of $\cA\otmax^d \bL(\cX)$.

Moreover, $\cX$ sits as a Banach subbundle of $\bL(\cX)$, and hence
\[
  \cA\otmax^d \cX
  := \Big\{ \cspn\{ a\otimes x : a\in A_t,\; x\in X_t \} \Big\}_{t\in G}
  = \{A_t\otmax X_t\}_{t\in G}
\]
is a Banach subbundle of $\cA\otmax^d \bL(\cX)$. 
By restricting the multiplication and involution from $\cA\otmax^d \bL(\cX)$, this subbundle acquires a natural structure of an $\cA\otmax^d\cB$--$\cA\otmax^d\cC$ equivalence bundle.

To check fullness, set
\[
   S := \cspn \Big\{ {}_{\cA\otmax^d \cB}\langle a\otimes x,\; b\otimes y\rangle 
   : a\in A_t,\, b\in A_\tmu,\, x\in X_t,\, y\in X_\tmu,\, t\in G \Big\}.
\]
A direct computation shows that
\[
   S = \cspn \{ ab^* \otimes {}_{\cB}\langle x,y\rangle : a\in A_t,\; b\in A_\tmu,\; x\in X_t,\; y\in X_\tmu,\; t\in G \}.
\]

Now suppose $\cA$ is the semidirect product bundle associated with a \Cstar-action $\alpha$ of $G$ on a \Cstar-algebra $A$. 
Given $z\delta_t \in A\delta_t \cong A_t$, one can write $z=z_1z_2$ with $z_1,z_2\in A$, and then
\[
   z\delta_e = (z_1\delta_t)\,(\alpha_{t^{-2}}(z_2)^*\delta_\tmu)^*.
\]
This implies that $S=A_e\otmax B_e$. 
A similar computation shows that
\[
   \cspn \Big\{ \langle a\otimes x, b\otimes y\rangle_{\cA\otmax^d \cC} : a\in A_t,\, b\in A_\tmu,\, x\in X_t,\, y\in X_\tmu,\, t\in G \Big\}
   = A_e\otmax C_e.
\]

Thus conditions (7R) and (7L) of \cite[Definitions~2.2]{AbFrrEquivalence} are satisfied. 
All the remaining conditions follow automatically from the fact that the operations on $\cA\otmax^d \cX$ were inherited from the Fell bundle $\cA\otmax^d \bL(\cX)$. 
Hence $\cA\otmax^d\cB$ and $\cA\otmax^d\cC$ are equivalent.

\subsection{Proof of Lemma \ref{lemma: pmap and the strong equivalente}}
    Let $\cA=\{A_t\}_{t\in G}$ and $\cB=\{B_t\}_{t\in G}$ be Fell bundles and let $\cX=\{X_t\}_{t\in G}$ be a strong $\cA$--$\cB$ equivalence bundle with inner products ${}_\cA\langle \cdot,\cdot\rangle$ and $\langle \cdot,\cdot\rangle_\cB$. Since strong equivalence is an equivalence relation \cite[Theorem~A.1]{AbBssFrrMorita}, we may assume, without loss of generality, that $\cA$ has the positive $1$\nobreakdash-approximation property. Let $\{\xi_i\}_{i\in I}\subseteq C_c(G,A_e)$ be a net as in the definition of this property.

    Let $\Gamma$ denote the set of cross-sections of $\cB$ of the form $r\mapsto \langle x,f(r)y\rangle_\cB$, with $x,y\in X_e$ and $f\in C_c(\cA)$. Clearly $C_c(G)\Gamma\subseteq \Gamma$, hence $\Gamma':=\spn\Gamma$ satisfies $C_c(G)\Gamma'\subseteq \Gamma'$. For all $r\in G$, using \cite[Notation~2.1]{AbBssFrrMorita} and the definition of strong equivalence, one has
    \[
       B_r = B_rB_r^*B_r = B_r\langle X_r,X_r\rangle_\cB = \langle X_rB_{r^{-1}},X_r\rangle_\cB \subseteq \langle X_e, {}_\cA\langle X_r,X_r\rangle X_r\rangle_\cB \subseteq \langle X_e,A_rX_e\rangle_\cB,
    \]
    so that $\{u(r):u\in\Gamma'\}=\spn\{\langle x,ay\rangle_\cB: x,y\in X_e,\,a\in A_r\}$ is dense in $B_r$.

    By combining the Axiom of Choice with Lemma~\ref{lemma: convergence on gamma suffices}, we get that it suffices to show that for every finite $U\subseteq\Gamma$ and $\varepsilon>0$, the set
    \[
       S(U,\varepsilon):=\{\eta\in C_c(G,B_e): \|\eta\|_2\leq 1,\ \|\Phi_\eta(f)-f\|_\infty\leq \varepsilon\ \forall f\in U\}
    \]
    is nonempty.

    Fix $U\subseteq \Gamma$ finite and $\varepsilon>0$. Since $X_e$ is an $A_e$--$B_e$ equivalence module and every approximate unit of $B_e$ is a strong approximate unit of $\cB$, \cite[Lemma~2.8]{AbFrrEquivalence} yields the existence of $x_1,\dots,x_n\in X_e$ such that, for $b:=\sum_{k=1}^n\langle x_k,x_k\rangle_\cB$, we have $0<\|b\|<1$ and
    \[
       \|bu(r)b-u(r)\|<\varepsilon/2 \quad \forall (r,u)\in G\times U.
    \]
    By assumption, for sufficiently large $i\in I$ we have
    \[
       \|\Phi_{\xi_i}({}_\cA\langle x_ku(r),x_l\rangle)-{}_\cA\langle x_ku(r),x_l\rangle\|<\tfrac{\varepsilon}{2n^2}
    \]
    for all $k,l=1,\dots,n$, all $r\in G$, and all $u\in U$.

    Define $\eta\in C_c(G,B_e)$ by $\eta(s)=\sum_{k=1}^n\langle x_k,\xi_i(s)x_k\rangle_\cB$. Then
    \[
       \langle\eta,\eta\rangle=\int_G \sum_{k,l=1}^n \langle \xi_i(s)x_k, {}_\cA\langle x_k,x_l\rangle \xi_i(s)x_l\rangle_\cB\,ds.
    \]
    Viewing $X_e^n$ as a $\bM_n(A_e)$--$B_e$ Hilbert bimodule, for the element $x=(x_1,\ldots,x_n)\in X_e^n$ we have  ${}_{\bM_n(A_e)}\langle x,x\rangle = ({}_\cA\langle x_i,x_j\rangle)_{i,j=1}^n\geq 0$; $\|({}_\cA\langle x_i,x_j\rangle)_{i,j=1}^n\|=\|x\|^2 = \|\sum_{i=1}^n \langle x_i,x_i\rangle_\cB\|\leq 1$ and it follows that 
    \[
      0\leq \sum_{k,l}\langle \xi_i(s)x_k, {}_\cA\langle x_k,x_l\rangle \xi_i(s)x_l\rangle_\cB \leq \|x\| \sum_{k=1}^n \langle x_k,\xi_i(s)^*\xi_i(s)x_k\rangle_\cB.
    \]
    Integrating with respect to $s$ one gets
    \[
       \langle\eta,\eta\rangle \leq \sum_{k=1}^n \langle x_k, \int_G \xi_i(s)^*\xi_i(s)\, ds x_k\rangle_\cB \leq \sum_{k=1}^n \|\xi_i\|_2^2 \langle x_k, x_k\rangle_\cB =  \|\xi_i\|_2^2\,b,
    \]
    and therefore $\|\eta\|_2\leq \|\xi_i\|_2\leq 1$.

    Now let $(r,u)\in G\times U$, say $u(r)=\langle x_u,f_u(r)y_u\rangle_\cB$. A direct computation gives
    \[
       \Phi_\eta(u(r))=\sum_{k,l=1}^n\langle x_k,\Phi_{\xi_i}({}_\cA\langle x_ku(r),x_l\rangle)x_l\rangle_\cB.
    \]
    Combining this with $\|x_k\|^2=\|\langle x_k,x_k\rangle_\cB\|\leq \|b\|<1$, we obtain
    \begin{align*}
       \|\Phi_\eta(u(r))-u(r)\| &\leq \|bu(r)b-u(r)\| + \Big\|\sum_{k,l}\langle x_k,(\Phi_{\xi_i}-\id)({}_\cA\langle x_ku(r),x_l\rangle)x_l\rangle_\cB\Big\| \\
       &< \tfrac{\varepsilon}{2} + \sum_{k,l=1}^n \|x_k\|\|x_l\|\,\tfrac{\varepsilon}{2n^2} \\
       &< \tfrac{\varepsilon}{2} + \tfrac{\varepsilon}{2}=\varepsilon.
    \end{align*}
    Thus $S(U,\varepsilon)\neq\emptyset$, and the lemma follows.

\bibliographystyle{plain}

\end{document}